\newtheorem{theorem}{Theorem}[section]
\newtheorem{lemma}[theorem]{Lemma}
\theoremstyle{definition}
\newtheorem{definition}[theorem]{Definition}
\theoremstyle{remark}
\newtheorem{remark}[theorem]{Remark}
\newtheorem{example}[theorem]{Example}
\newtheorem{corollary}[theorem]{Corollary}
\newcommand{\norm}[1]{\left\lVert#1\right\rVert}
\DeclarePairedDelimiter{\ceil}{\lceil}{\rceil}
\def\EE{\mathbb{E}}
\def\PP{\mathbb{P}}
\def\NN{\mathbb{N}}
\def\RR{\mathbb{R}}
\begin{document}

\title[Asymptotic regularity of a generalised stochastic Halpern scheme]{Asymptotic regularity of a generalised stochastic Halpern scheme}

\author[N. Pischke and T. Powell]{Nicholas Pischke and Thomas Powell}
\date{\today}
\thanks{{\bf Funding:} The first author was supported by the `Deutsche Forschungs\-gemein\-schaft' Project DFG KO 1737/6-2. The second author was supported by the EPSRC grant EP/W035847/1.\\
{\bf Acknowledgments:} We want to thank the two anonymous referees for their helpful suggestions which improved the paper in various places.\\
{\bf Conflicts of interests:} The authors have no relevant financial or non-financial interests to disclose.\\
{\bf Data availability:} Data sharing not applicable to this article as no data sets were generated or analyzed.
}
\maketitle
\vspace*{-5mm}
\begin{center}
{\scriptsize 
Department of Computer Science, University of Bath,\\
Claverton Down, Bath, BA2 7AY, United Kingdom,\\
E-mails: $\{$nnp39,trjp20$\}$@bath.ac.uk}
\end{center}

\maketitle
\begin{abstract}
We provide abstract, general and highly uniform rates of asymptotic regularity for a generalized stochastic Halpern-style iteration, which incorporates a second mapping in the style of a Krasnoselskii-Mann iteration. This iteration is general in two ways: First, it incorporates stochasticity completely abstractly, rather than fixing a sampling method; second, it includes as special cases stochastic versions of various schemes from the optimization literature, including Halpern's iteration as well as a Krasnoselskii-Mann iteration with Tikhonov regularization terms in the sense of Bo\c{t}, Csetnek and Meier (where this stochastic variant of the latter is considered for the first time in this paper). For these specific cases, we obtain linear rates of asymptotic regularity, matching (or improving) the currently best known rates for these iterations in stochastic optimization, and quadratic rates of asymptotic regularity are obtained in the context of inner product spaces for the general iteration. We conclude by discussing how variance can be managed in practice through sampling methods in the style of minibatching, how our convergence rates can be adapted to provide oracle complexity bounds, and by sketching how the schemes presented here can be instantiated in the context of reinforcement learning to yield novel methods for Q-learning.
\end{abstract}
\noindent
{\bf Keywords:} Asymptotic regularity, Halpern iteration, Tikhonov regularization, proof mining\\ 
{\bf MSC2020 Classification:} 47J25, 47H09, 62L20, 03F10

\section{Introduction}

\subsection{Background and motivation}

Approximating fixed points of nonexpansive mappings is one of the most fundamental tasks in nonlinear analysis and optimization. The problem becomes particularly interesting when we only have noisy versions of those mappings, in which case the resulting approximation methods become stochastic processes. Model-free reinforcement learning algorithms represent just one well-known example of this general situation, where variants of $Q$-learning, for instance, can be viewed as stochastic methods for computing fixed points of nonexpansive operators.

Let $(X,\norm{\cdot})$ be a separable real-valued normed space and $T,U:X\to X$ be two nonexpansive mappings on $X$, i.e.
\[
\norm{Tx-Ty}\leq \norm{x-y}\text{ and }\norm{Ux-Uy}\leq\norm{x-y}
\]
for all $x,y\in X$. In order to approximate common fixed points of two such mappings under stochastic noise constraints, we introduce in this paper the so-called stochastic Halpern-Mann iteration, given by the scheme
\[
\begin{cases}
y_n:=(1-\alpha_n)(Tx_n+\xi_n)+\alpha_nu,\\
x_{n+1}:=(1-\beta_n)(Uy_n+\delta_n)+\beta_ny_n,
\end{cases}\tag{sHM}\label{sHM}
\]
where, over some fixed probability space $(\Omega,\mathcal F,\PP)$, $x_0$ and $u$ are arbitrary $X$-valued random variables chosen as a fixed starting point and as an anchor of the iteration, respectively, $(\xi_n),(\delta_n)$ are sequences of $X$-valued random variables representing the stochastic noise, and $(\alpha_n),(\beta_n)\subseteq [0,1]$ are suitable \emph{nonstochastic} parameter sequences.

Our main scheme (\ref{sHM}) represents a stochastic analogue of the deterministic Halpern-Mann scheme for two operators (HM), schematically appearing already in \cite{NND2012}, and recently (re-)discovered in a much more generalized setting in \cite{DP2023}. This scheme integrates two of the most prominent methods for approximating fixed points for nonexpansive mappings, the Krasnoselskii-Mann method \cite{Kra1955,Man1953} and Halpern's method \cite{Hal1967} (see also \cite{Wit1992} and \cite{Xu2002}), with the intended gain of combining the beneficial features of both, in particular the strong convergence of Halpern's method even in infinite dimensional spaces. We propose that this combination gains further significance in the stochastic setting, where we anticipate that it can be used in particular to devise novel reinforcement learning algorithms or stochastic splitting methods.

In contrast to the deterministic scheme, \eqref{sHM} is designed to capture situations in which one does not have direct access to $Tx_n$ (respectively $Uy_n$), but can only use noisy versions $\tilde T$ of $T$ (and $\tilde U$ of $U$). Intuitively, $\xi_n$ would then represent the difference between $Tx_n$ and the corresponding approximation of $Tx_n$ obtained from $\tilde T$ through a suitable sampling method (and similarly for $\delta_n$ and $Uy_n$), though our presentation is fully abstract and we will make no assumptions about $\xi_n$ or $\delta_n$, other than imposing controls on $\EE[\norm{\xi_n}]$ and $\EE[\norm{\delta_n}]$, in a way which can be easily achieved in concrete scenarios through sampling methods such as minibatching, as will also be discussed later.

To date, only the very simple instance of \eqref{sHM} corresponding to a stochastic variant of Halpern's method (H) has been previously studied. Concretely, setting $U:=\mathrm{Id}$ and $\delta_n:=0$ gives rise to the iteration
\[
x_{n+1}=(1-\alpha_n)(Tx_n+\xi_n)+\alpha_nu,\tag{sH}\label{sH}
\]
considered on an abstract level in \cite{BC2024} (for finite dimensional normed spaces), a scheme which in the Euclidean setting has recently received a great deal of attention in the context of stochastic monotone inclusion problems \cite{Caietal2022,Dia2020,LTZ2024,YR2021}. In all cases, controlling the variance of the noise terms is crucial for convergence, and this is just one of several elements that makes the analysis of stochastic schemes such as \eqref{sH} markedly different from that of their nonstochastic counterparts, some of the others being a focus on \emph{oracle} complexity, and the relevance of stochastic methods to statistics and machine learning.

Given this increasing interest in stochastic variants of classic methods, the purpose of the present paper is to broaden their current scope and provide a collection of generalised convergence results in which all of the aforementioned features (variance reduction, oracle complexity, applications in machine learning) are presented in the abstract. 

While we focus on the special case of the stochastic Halpern iteration (\ref{sH}) at several points in the paper, where it forms a useful example, our method \eqref{sHM} is certainly not limited to this special case, and in line with \cite{DP2023} encompasses stochastic variants of other well-known deterministic methods. An important example of this is represented by setting $T:=\mathrm{Id}$ and $\xi_n:=0$, as well as $u:=0$ and $\gamma_n:=1-\alpha_n$, whereby we obtain a version of the Krasnoselskii-Mann iteration with Tikhonov regularization terms (KM-T) considered in \cite{BCM2019,YZL2009} that now incorporates stochastic noise, taking the form of
\[
x_{n+1}=(1-\beta_n)(U(\gamma_n x_n)+\delta_n)+\beta_n(\gamma_n x_n).\tag{sKM-T}\label{sKMT}
\]
Alternatively, the method can be seen as the stochastic Krasnoselskii-Mann iteration as considered in \cite{BravCom2024} with Tikhonov regularization terms as considered in \cite{BCM2019}. This method is known to produce fast asymptotic behavior in the deterministic setting, and in contrast to the standard Krasnoselskii-Mann scheme, also benefits from strong convergence results similarly to Halpern's iteration \cite{BCM2019}. As we will show in this paper, in particular the first of these features extends to the stochastic setting (while we also lay the foundation for the second, as discussed later). For a simple overview, we present the relationships between the different stochastic schemes considered here, as well as their deterministic counterparts, in Figure \ref{figure:flow}.

\begin{figure}[h]
\centering
\begin{tikzpicture}
\node (sHM) at (0, 0) {sHM};
\node (sH) at (3, 2) {sH};
\node (sKM-T) at (3.4, -2) {sKM-T};
\node (HM) at (5, 0) {HM};
\node (H) at (8, 2) {H};
\node (KM-T) at (8.4, -2) {KM-T};
\draw [->] (sHM) -- (sH) node[midway,above,sloped] () {\tiny{$U=\mathrm{Id}, \delta_n=0$}};
\draw [->] (sHM) -- (sKM-T) node[midway,below,sloped] () {\tiny{$T=\mathrm{Id}, \xi_n=0$}};
\draw [->] (HM) -- (H) node[midway,below,sloped] () {\tiny{$U=\mathrm{Id}$}};
\draw [->] (HM) -- (KM-T) node[midway,below,sloped] () {\tiny{$T=\mathrm{Id}$}};
\draw [dashed, ->] (sHM) -- (HM) node[midway,below,sloped] () {\tiny{$\xi_n=0, \delta_n=0$}};
\draw [dashed, ->] (sH) -- (H) node[midway,below,sloped] () {\tiny{$\xi_n=0$}};
\draw [dashed, ->] (sKM-T) -- (KM-T) node[midway,below,sloped] () {\tiny{$\delta_n=0$}};
\end{tikzpicture}
\caption{Relationships of the stochastic Halpern-type schemes and their deterministic variants}
\label{figure:flow}
\end{figure}

To our knowledge, each of the stochastic schemes \eqref{sHM} and \eqref{sKMT} are introduced here for the first time.

\subsection{Overview of the main results}

Our main results comprise both abstract convergence theorems for the stochastic schemes in Figure 1, valid under very broad assumptions (and with convergence rates given at a corresponding level of generality), along with a series of special cases where fast, linear rates are possible, the latter demonstrating that our framework includes stochastic algorithms that come with state-of-the-art convergence guarantees of a kind only recently established for deterministic Halpern-type schemes \cite{Lie2021,SS2017}. We now outline each of these in turn.

\subsubsection{General (quantitative) asymptotic regularity results}

The main results of the paper establish general conditions under which one can guarantee the asymptotic regularity of the scheme (\ref{sHM}), both in the traditional sense of $\norm{x_n-x_{n+1}}$ (sometimes called the discrete velocity \cite{BSV2023}) and also relative to the mappings, i.e.\ considering the displacements $\norm{x_n-Tx_n}$ and $\norm{x_n-Ux_n}$. Furthermore, we establish these asymptotic regularity results both in expectation and almost surely, that is we show both
\[
\EE[\norm{x_n-x_{n+1}}],\ \EE[\norm{x_n-Tx_n}],\ \EE[\norm{x_n-Ux_n}]\to 0
\]
as well as
\[
\norm{x_n-x_{n+1}},\ \norm{x_n-Tx_n},\ \norm{x_n-Ux_n}\to 0\text{ almost surely}.
\]
Most importantly, in all cases we provide explicit convergence rates for these expressions. In the case of convergence in mean, these rates take the form of functions $\varphi$ which guarantee
\[
\EE[\norm{x_n-x_{n+1}}]<\varepsilon\text{ for all }\varepsilon>0\text{ and any }n\geq\varphi(\varepsilon),
\]
and similarly for $\EE[\norm{x_n-Tx_n}]$ and $\EE[\norm{x_n-Ux_n}]$. In the case of our almost-sure convergence results, our rates instead take the form of functions $\Phi$ which guarantee
\[
\PP(\exists n\geq\Phi(\lambda,\varepsilon)(\norm{x_n-x_{n+1}}\geq\varepsilon))<\lambda
\]
for all $\varepsilon,\lambda>0$ (and similarly for $\norm{x_n-Tx_n}$ and $\norm{x_n-Ux_n}$). In all cases, these rates are explicitly constructed under very general conditions and depend only on a few moduli witnessing quantitative aspects of our main assumptions. 

These general asymptotic regularity results and their corresponding rates for the scheme \eqref{sHM}, both in the sense of the discrete velocity and relative to the mappings, can be found in Theorems \ref{res:x:ar} -- \ref{res:other:ar:as}. While the present paper features various fine-grained discussions on the precise assumptions necessary for each of these results, they essentially amount to 
\begin{itemize}
	\item the existence of a common fixed point of $T$ and $U$, \smallskip
	\item standard conditions on the parameters $(\alpha_{n})$ and $(\beta_n)$, and \smallskip
	\item variance control in the form of $\sum_{n=0}^\infty\EE[\norm{\xi_n}]<+\infty$ and $\sum_{n=0}^\infty\EE[\norm{\delta_n}]<+\infty$.
\end{itemize}

In their quantitative variants, the respective rates correspondingly rely on bounds and rates witnessing these assumptions in various ways. While the results on the discrete velocity as given in Theorems \ref{res:x:ar} and \ref{res:y:ar} are proved unconditionally, asymptotic regularity relative to the mappings turns out to be more involved, and the respective Theorems \ref{res:other:ar} and \ref{res:other:ar:as} depend a priori on the assumption that $\norm{Uy_n-y_n}\to 0$, either in mean or almost surely. It is precisely at this point that our treatment divides according to the main schemes we consider in Figure 1. For \eqref{sH} and \eqref{sKMT}, this premise either trivializes or can be directly derived without additional conditions (see Theorem \ref{thm:UAsRegKMT}), but for the general case of \eqref{sHM}, establishing $\norm{Uy_n-y_n}\to 0$ almost surely requires a subtle pointwise geometric argument based on the additional assumption that the underlying space $X$ is uniformly convex (see Theorem \ref{geometry:as}). Our corresponding result in mean (see Theorem \ref{geometry:alt}) is effectively obtained as a direct lift of the pointwise argument, which we achieve in an abstract way by showing that the sequence $(\norm{Uy_n-y_n})$ is uniformly integrable given the existence of a common fixed point and the variance assumptions $\sum_{n=0}^\infty\EE[\norm{\xi_n}]<+\infty$ and $\sum_{n=0}^\infty\EE[\norm{\delta_n}]<+\infty$. 

Naturally, these geometric and probabilistic considerations also influence the corresponding rates, where the crucial assumption of uniform convexity and passage through uniform integrability result in a dependence of the rate on a modulus measuring the degree of uniform convexity of $X$, along with a similar modulus measuring the degree of uniform integrability of $(\norm{Uy_n-y_n})$. In particular, we show how such moduli can be explicitly constructed for particular spaces, and from natural quantitative integrability assumptions on the error terms, respectively.

To the best of our knowledge, all the respective rates for \eqref{sKMT} and \eqref{sHM}, as well as even the ``qualitative'' asymptotic regularity results, that is convergence alone even without considering the quantitative aspects, are completely novel. In terms of complexity and structure, they seem to match previously constructed general rates for their deterministic analogs (KM-T) and (HM) derived in \cite{CKL2023,CL2022,CL2024,DP2021} and \cite{DP2023,LP2024}, respectively, with the additional component of a modulus of uniform integrability featuring in the case of \eqref{sHM} as mentioned before. In the case of the stochastic Halpern iteration \eqref{sH}, the only scheme already to have been studied, our convergence theorems are the best yet in that they both generalize and improve the asymptotic regularity results recently presented in \cite{BC2024}.

\subsubsection{Fast rates of asymptotic regularity}

Beyond just presenting general and rather abstract convergence results and rates as outlined above, we also identify natural circumstances under which these rates are very fast, reaching up to linear speed in special cases. For the parameter choices $\alpha_n=2/(n+2)$ and $\beta_n=\beta\in (0,1)$, together with sufficiently fast decaying errors with $\EE[\norm{\xi_n}],\EE[\norm{\delta_n}]\leq K/(n+2)^2$ for some constant $K$, we derive (see Theorem \ref{thm:fastX}) linear rates for the discrete velocity of the general scheme \eqref{sHM} in the form of
\[
\EE[\norm{x_n-x_{n+1}}]\leq \frac{K'}{n+2}\text{ and }\PP\left(\exists i\geq n\left( \norm{x_i-x_{i+1}}\geq\varepsilon\right)\right)\leq \frac{1}{\varepsilon}\frac{K'}{n+2}
\]
for all $n\in\mathbb{N}$ and $\varepsilon>0$, and some suitable (explicitly definable) constant $K'$. Similar results also hold for the discrete velocity of $(y_n)$ in mean and almost surely (see Theorem \ref{thm:fastY}). In particular, we want to note that the above results, as well as all other linear rates, are actually given in the form of nonasymptotic guarantees, also in the case of the almost sure rates.

In the case of asymptotic regularity relative to the mappings, our results again become sensitive to the choice of the scheme: The special cases of \eqref{sH} and \eqref{sKMT} exhibit linear nonasymptotic guarantees under the same assumptions on the parameters as detailed above. In the case of \eqref{sH} (see Theorem \ref{thm:fastHalpern}), we in particular have
\[
\EE[\norm{Tx_n-x_n}]\leq \frac{K'}{n+2}\text{ and }\PP\left(\exists i\geq n\left( \norm{Tx_i-x_i}\geq\varepsilon\right)\right)\leq\frac{1}{\varepsilon}\frac{K'}{n+2}
\]
for all $n\in\mathbb{N}$ and $\varepsilon>0$ and some suitable (explicitly definable) constant $K'$ similar to before. Effectively the same results hold for $U$ in place of $T$ in the case of \eqref{sKMT} (see Theorem \ref{thm:fastTM}).

The results for the general scheme \eqref{sHM} again rely both on assumptions on the geometry of the space $X$ and the uniform integrability of the sequence $(\norm{Uy_n-y_n})$. In the special case of a uniformly convex space of power type $p\geq 2$, our results in particular yield rates of order $p$ so that we obtain quadratic rates in the case of inner product spaces, with order
\[
\mathcal{O}(\varepsilon^{-2}\lambda^{-2})\text{ or } \mathcal{O}(\varepsilon^{-2}\mu(\varepsilon/4)^{-2})
\]
for $\norm{Ux_n-x_n},\norm{Tx_n-x_n}\to 0$ almost surely or in mean, respectively, where $\mu$ is our modulus of uniform integrability for $(\norm{Uy_n-y_n})$ (see Theorem \ref{thm:fastGeneral} and Remark \ref{rem:fastRateGen}).

As with our generally constructed rates, our fast rates for the schemes \eqref{sHM} and \eqref{sKMT} are to the best of our knowledge also novel. In regards to complexity, and seem to match the complexities observed in the deterministic case (we again refer to \cite{DP2023,LP2024} and \cite{CKL2023,CL2022,CL2024,DP2021}, respectively). For the scheme \eqref{sH}, our results improve the fast rates presented in \cite{BC2024} by removing the respective logarithmic factors contained therein. In particular, by restricting our attention to the deterministic Halpern iteration (H), we reobtain the linear rates of asymptotic regularity previously derived in \cite{Lie2021,SS2017} (albeit with different constants), which are known to be tight (see \cite{Lie2021}). Indeed, our linear rates are derived by adapting the approach of \cite{SS2017}). Based on their inherent similarity, it can moreover be believed that the linear rates of asymptotic regularity observed for the deterministic Krasnoselskii-Mann iteration with Tikhonov regularization terms (KM-T) are also tight, in which case our corresponding results for the stochastic scheme \eqref{sKMT} would be tight as well, although we are not aware of any results that prove this explicitly. The question for tight rates for the general Halpern-Mann scheme, both in its deterministic variant (HM) as well as in the stochastic version \eqref{sHM} presented here, remains an open problem.

\subsection{Future applications and developments}

Beyond our series of general theoretical results, one of the core motivations for this paper is the real applicability of those results in concrete areas, and we conclude with a section on how this might be achieved. First, we discuss how the requisite variance control can be managed in a practical way through the use of sampling techniques such as minibatching, and connected with this we show how our complexity results can be lifted to corresponding results on oracle complexity. We then outline several concrete applications of our methods, sketching the particularly interesting case of reinforcement learning, where the novel schemes \eqref{sHM} and \eqref{sKMT} can be instantiated in the style of Q-learning, as done recently for the stochastic Halpern iteration in \cite{BC2024}. A proper account of these applications will be provided in a forthcoming paper by the authors. Another crucial property of Halpern-type iterations not addressed in the present paper is the fact that they remain strongly convergent also in infinite dimensional spaces. These strong convergence results extend to the stochastic setting, and will be similarly addressed in forthcoming work which in particular relies on the asymptotic regularity results established here.

\subsection{This paper in connection with the proof mining program}

All of the results obtained in this paper are motivated via the methodology of the proof mining program, a subfield of mathematical logic which combines an abstract approach to proofs in mainstream mathematics with the extraction of computational information, such as bounds or rates, from those proofs. We refer to the seminal monograph \cite{Koh2008} for a comprehensive overview of both theoretical as well as applied aspects of this program, along with the survey \cite{Koh2019} for an overview of more recent applications to nonlinear analysis. Proof mining has been widely applied in nonlinear analysis, and has found particular success in providing quantitative convergence results for Halpern's iteration and its many variants, with notable instances ranging from initial rates of asymptotic regularity for Halpern's iteration given by Leu\c{s}tean \cite{Leu2007b} and the first analysis of Wittmann's proof of the strong convergence of Halpern's iteration given by Kohlenbach \cite{Koh2011}, to the extensions of these results to nonlinear context such as $\mathrm{CAT}(0)$-spaces as in \cite{KL2012} (by a logical analysis of a corresponding convergence proof by Saejung \cite{Sae2010}). They also include extensions of the Halpern iteration \cite{SK2012} for the modified Mann iteration introduced in \cite{KX2005} (and extended to nonlinear spaces in \cite{CP2011}) as well as the Krasnoselskii-Mann iteration with Tikhonov regularization terms and its extensions as in \cite{CKL2023,CL2022,CL2024,DP2021} (with \cite{CKL2023} of particular note, as linear rates of asymptotic regularity are there obtained for the first time in the context of applications of proof mining). In particular, the definition of the deterministic Halpern-Mann method given in \cite{DP2023} and its corresponding convergence proof were motivated by these logical considerations, as were the recent rates of asymptotic regularity given for this iteration in \cite{LP2024}.

The present work departs from the aforementioned case studies in nonlinear analysis in that it incorporates, for the first time, stochasticity. In this way it forms part of a recent advance of proof mining into probability theory, which comprises both new developments in the logical foundations of probability theory due to first author and Neri \cite{NPis2024}, together with applied results on the quantitative aspects of stochastic processes by the authors and Neri \cite{NPP2025,NPow2024b,NPow2024}. In particular, the present paper is one of the first applications of proof mining to stochastic optimization, and the very first to consider a concrete stochastic algorithm. It represents a particularly interesting case study in this respect, in that it does not readily follow from analogous quantitative results in the deterministic setting (such as in \cite{DP2023,LP2024}), but requires a substantial arsenal of new quantitative ideas for this stochastic setting. These include quantitative, stochastic variants of a crucial abstract lemma on recurrence inequalities due to Xu \cite{Xu2002} (Lemma \ref{stochasticbabyxu}), including an adaptation of the ``fast'' variant due to Sabach and Shtern \cite{SS2017} (Lemma \ref{sabach:stern:as}). Furthermore, proof mining also motivated the definition of a modulus of uniform integrability and its use as a suitable assumption to derive rates of asymptotic regularity in expectation in the general case of the iteration \eqref{sHM}. We envisage that all of these tools will be relevant in subsequent applications of proof mining in stochastic optimization, just as their deterministic counterparts have been used repeatedly for proof mining in nonstochastic optimization. 

We stress that while this logical perspective was crucial in obtaining the present results, the paper does not rely on any notions from logic at all.

\section{Preliminaries and basic lemmas}\label{sec:prelim}

We write $\mathbb{N}^*$ for $\mathbb{N}$ without $0$. Throughout, if not stated otherwise, we fix an underlying probability space $(\Omega,\mathcal{F},\PP)$ and all probabilistic notions such as almost sureness refer to that space. Similarly, $X$ will always denote, unless stated otherwise, a normed space with norm $\norm{\cdot}$. We refer to measurable functions $\Omega\to\mathbb{R}$ as random variables, to measurable functions $\Omega\to X$ as $X$-valued random variables and we refer to sequences of random variables as stochastic processes. In order to ensure that basic properties enjoyed by real-valued random variables are also inherited by $X$-valued random variables, so that in particular our main scheme \eqref{sHM} is well-defined, one normally requires some further assumptions on the underlying space (as discussed in detail in \cite{ProbInBanach1991}). The simplest option is to assume that $X$ is a separable Banach space, though if the reader prefers they can also just assume that $X$ is finite dimensional. Equalities and inequalities involving random variables will always be understood to hold almost surely, even if not explicitly indicated.

Throughout the paper, we will be concerned with quantitative variants of various notions and we here now briefly the discuss the key definitions of the main quantitative notions used in the paper:

Given a non-negative sequence of reals $(a_n)$, a rate of convergence for $a_n\to 0$ is a function $\varphi:(0,\infty)\to\mathbb{N}$ such that
\[
\forall\varepsilon>0\,\forall n\geq\varphi(\varepsilon)\left( a_n<\varepsilon\right).
\]
The immediate benefit of such a type of rate $\varphi$ is that if it is invertible and decreasing, then we can even derive the non-asymptotic estimate $a_n<\varphi^{-1}(n)$ for all $n\in\mathbb{N}$, which of course further implies a complexity bound on the sequence in terms of the commonly used big O notation, namely $(a_n)=O(\varphi^{-1}(n))$.

Now, given a nonnegative stochastic process $(X_n)$, a rate of convergence for $X_n\to 0$ almost surely is a function $\Phi:(0,\infty)^2\to\mathbb{N}$ such that
\[
\forall \lambda,\varepsilon>0\left( \PP\left(\exists n\geq \Phi(\lambda,\varepsilon)\, (X_n\geq \varepsilon)\right)<\lambda\right).
\]
We note that whenever $\Phi$ is a rate of convergence for $X_n\to 0$ almost surely, then for every $\varepsilon>0$, $\Phi(\varepsilon,\cdot)$ is a rate of convergence for $\PP\left(\sup_{n\geq N}\, (X_n\geq \varepsilon)\right)\to 0$ as $N\to\infty$ in the nonstochastic sense.

Further, given a non-negative sequence of reals $(a_n)$, we later want to quantitatively witness the convergence or divergence of the series over that sequence. For that, if $\sum_{n=0}^\infty a_n<\infty$, we say that a function $\chi:(0,\infty)\to\mathbb{N}$ is a rate of convergence for that sum if
\[
\forall \varepsilon>0\left( \sum_{n=\chi(\varepsilon)}^\infty a_n<\varepsilon\right).
\]
If $\sum_{n=0}^\infty a_n=\infty$, we say that a function $\theta:\mathbb{N}\times (0,\infty)\to \mathbb{N}$ is a rate of divergence for that sum if
\[
\forall b>0\,\forall k\in\mathbb{N}\left( \sum_{n=k}^{\theta(k,b)}a_n\geq b\right).
\]
Naturally, any such modulus satisfies $\theta(k,b)\geq k$ for any $k\in\mathbb{N}$ and $b>0$.

We now collect some of the basic abstract convergence results that our paper relies on. The most crucial of these, on the asymptotic behavior of sequences of reals satisfying certain recursive inequalities is the following due to Xu \cite{Xu2002}, often called Xu's lemma:

\begin{lemma}[\cite{Xu2002}]
Suppose that $(s_n),(c_n)\subseteq [0,\infty)$ as well as $(a_n)\subseteq [0,1]$ and $(b_n)\subseteq\mathbb{R}$ satisfy
\[
s_{n+1}\leq (1-a_n)s_n+a_nb_n+c_n
\]
for all $n\in\NN$ where $\sum_{n=0}^\infty a_n=\infty$, $\limsup b_n\leq 0$ and $\sum_{n=0}^\infty c_n<\infty$. Then $s_n\to 0$.
\end{lemma}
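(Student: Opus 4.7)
The plan is to show $\limsup s_n \leq \varepsilon$ for every $\varepsilon > 0$, which, combined with $s_n \geq 0$, yields $s_n \to 0$. The key reduction is to use $\limsup b_n \leq 0$ to eliminate the potentially harmful $b_n$-term: given $\varepsilon > 0$, choose $N \in \NN$ with $b_n \leq \varepsilon$ for all $n \geq N$. Since $a_n \geq 0$, the recursion then simplifies to $s_{n+1} \leq (1-a_n) s_n + a_n \varepsilon + c_n$ for $n \geq N$. Subtracting $\varepsilon$ and passing to the positive part, the sequence $s_n' := (s_n - \varepsilon)^+$ satisfies the cleaner recursion $s_{n+1}' \leq (1-a_n) s_n' + c_n$, and it suffices to prove $\limsup s_n' = 0$.

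I would then iterate this inequality explicitly. For any $m > N$ one obtains
\[
s_{m+1}' \leq \left(\prod_{k=N}^m (1-a_k)\right) s_N' + \sum_{k=N}^m \left(\prod_{j=k+1}^m (1-a_j)\right) c_k.
\]
Using the elementary bound $\prod_{k=N}^m (1-a_k) \leq \exp\left(-\sum_{k=N}^m a_k\right)$ together with $\sum a_n = \infty$, the first term vanishes as $m \to \infty$. For the sum, fix $\eta > 0$ and choose $M \geq N$ with $\sum_{k\geq M} c_k < \eta$. Splitting the sum at $M$, the tail portion is dominated by $\eta$ outright, while the finite head $\sum_{k=N}^{M-1} \prod_{j=k+1}^m (1-a_j) c_k$ consists of finitely many summands each multiplied by a product that tends to $0$ as $m \to \infty$ (again by divergence of $\sum a_n$, applied now from $M$ onwards). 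Hence $\limsup s_{m+1}' \leq \eta$ for arbitrary $\eta > 0$, giving $\limsup s_n' = 0$ and therefore $\limsup s_n \leq \varepsilon$.

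The main technical obstacle is that the additive error $c_n$ is \emph{not} attenuated by the contracting factor $a_n$, so it cannot be absorbed into the $\limsup b_n \leq 0$ reduction and genuinely requires the separate summability hypothesis $\sum c_n < \infty$. The splitting trick above is precisely what resolves this: one chooses $M$ so that the tail $\sum_{k\geq M} c_k$ is already small, and then exploits that on the finitely many head terms the products $\prod_{j=M}^m (1-a_j)$ are forced to $0$ by the divergence of $\sum a_n$. This delicate interplay between the two assumptions — one summability, the other divergence — is what makes the lemma work, and I do not see a shorter route.
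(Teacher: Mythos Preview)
Your argument is correct. The reduction via $s_n':=(s_n-\varepsilon)^+$ is clean: from $s_{n+1}-\varepsilon\leq(1-a_n)(s_n-\varepsilon)+c_n$ one indeed gets $s_{n+1}'\leq(1-a_n)s_n'+c_n$ in both cases $s_n\gtrless\varepsilon$, and the subsequent product expansion with the head/tail split is the standard way to finish.

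As for comparison: the paper does not actually supply a proof of this lemma. It is stated with attribution to Xu \cite{Xu2002} and then immediately followed by the quantitative special case (Lemma~\ref{lem:quantXuLem}), whose proof is likewise omitted with a reference to \cite{KL2012,LP2021}. Your proof is essentially the classical one found in Xu's original paper and its many descendants, so there is nothing genuinely different to report. One minor remark: your closing comment that you ``do not see a shorter route'' slightly overstates the difficulty --- the argument you gave \emph{is} the short route, and the head/tail splitting for $\sum c_n$ is entirely routine rather than delicate.
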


We will in particular rely on a quantitative rendering of an instance of Xu's lemma which is represented by the following lemma. This result is contained in \cite{KL2012,LP2021} (up to the way the errors and the moduli are phrased) therefore for brevity we omit the proof.

\begin{lemma}[essentially \cite{KL2012,LP2021}]\label{lem:quantXuLem}
Suppose that $(s_n),(c_n)\subseteq [0,\infty)$ and $(a_n)\subseteq [0,1]$ satisfy
\[
s_{n+1}\leq (1-a_n)s_n+c_n
\]
for all $n\in\NN$, and furthermore, that $K>0$ is an upper bound on $(s_n)$, $\theta$ is a rate of divergence for $\sum_{n=0}^\infty a_n=\infty$ and $\chi$ a rate of convergence for $\sum_{n=0}^\infty c_n<\infty$. Then $s_n\to 0$ with rate
\[
\varphi_{K,\theta,\chi}(\varepsilon):=\theta\left(\chi\left(\frac{\varepsilon}{2}\right),\ln \left(\frac{2K}{\varepsilon}\right)\right)+1.
\]
\end{lemma}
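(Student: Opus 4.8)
The plan is to unfold the recursion explicitly and then balance the two sources of error — the geometric decay of the ``memory'' of $s_m$ against the summability of the perturbations $(c_n)$ — by spending one half of the target precision $\varepsilon$ on each, controlled respectively by $\theta$ and by $\chi$.

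First I would prove, by an easy induction on $n$, that for all $m\le n$
\[
s_n\le s_m\prod_{i=m}^{n-1}(1-a_i)+\sum_{i=m}^{n-1}c_i\prod_{j=i+1}^{n-1}(1-a_j);
\]
the inductive step simply multiplies the hypothesis by $(1-a_n)\ge 0$ — this is where $(a_n)\subseteq[0,1]$ is used — and adds $c_n$. Since each factor $1-a_i$ lies in $[0,1]$, every product $\prod_{j=i+1}^{n-1}(1-a_j)$ is at most $1$, and using $s_m\le K$ together with the elementary inequality $1-x\le e^{-x}$ this collapses to
\[
s_n\le K\exp\!\left(-\sum_{i=m}^{n-1}a_i\right)+\sum_{i=m}^{n-1}c_i .
\]

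Now fix $\varepsilon>0$. One may assume $\varepsilon<2K$: if $\varepsilon\ge 2K$ then $s_n\le K\le\varepsilon/2<\varepsilon$ for every $n$ and the claim is trivial (this is also the only regime in which $\ln(2K/\varepsilon)$ could be non-positive, so restricting to $\varepsilon<2K$ keeps the use of $\theta$ below legitimate, or one may simply adopt the convention $\theta(k,b):=k$ for $b\le 0$). Put $m:=\chi(\varepsilon/2)$; by the defining property of $\chi$ we get $\sum_{i=m}^{\infty}c_i<\varepsilon/2$, hence $\sum_{i=m}^{n-1}c_i<\varepsilon/2$ for all $n$. For the first summand, note that for any $n$ with $n-1\ge\theta(m,\ln(2K/\varepsilon))$ — i.e.\ $n\ge\varphi_{K,\theta,\chi}(\varepsilon)$ — monotonicity of partial sums and the defining property of the rate of divergence $\theta$ give $\sum_{i=m}^{n-1}a_i\ge\sum_{i=m}^{\theta(m,\ln(2K/\varepsilon))}a_i\ge\ln(2K/\varepsilon)$, so $K\exp(-\sum_{i=m}^{n-1}a_i)\le K\exp(-\ln(2K/\varepsilon))=\varepsilon/2$. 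Since $\theta(m,b)\ge m$ always, $\varphi_{K,\theta,\chi}(\varepsilon)\ge m+1$, so the unfolded inequality genuinely applies for such $n$, and combining the two bounds yields $s_n\le\varepsilon/2+\sum_{i=m}^{n-1}c_i<\varepsilon$, which is exactly the asserted rate.

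I do not expect a real obstacle here — the statement is a quantitative bookkeeping exercise and is, as the authors note, essentially contained in \cite{KL2012,LP2021}. The only point that needs care is conceptual rather than computational: the two halves of $\varepsilon$ are controlled by genuinely different mechanisms, so the unfolding of the recursion must be started at precisely the index $m=\chi(\varepsilon/2)$ at which the tail $\sum_{i\ge m}c_i$ has already dropped below $\varepsilon/2$, and only from that point onward is the geometric factor $\exp(-\sum_{i=m}^{n-1}a_i)$ allowed to contract $s_m\le K$ down to $\varepsilon/2$; getting the index arithmetic (and the strict versus non-strict inequalities) to line up with the stated $\theta(\chi(\varepsilon/2),\ln(2K/\varepsilon))+1$ is the whole content.
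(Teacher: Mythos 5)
Your proof is correct, and it is the standard argument; the paper itself omits the proof of this lemma, deferring to \cite{KL2012,LP2021}, and your unfolding of the recursion followed by $1-x\le e^{-x}$, the split of $\varepsilon$ into two halves controlled by $\chi$ (at the starting index $m=\chi(\varepsilon/2)$) and $\theta$ (from $m$ to $\theta(m,\ln(2K/\varepsilon))$), and the boundary remark about $\varepsilon\ge 2K$ is exactly the argument found in those sources. No gaps.
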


We now extend this lemma to a probabilistic variant. For that, we first consider the following result which allows us to transfer quantitative information from convergence in mean for ``almost-monotone'' sequences of random variables to rates of almost sure convergence.

\begin{lemma}\label{prob:to:as}
Let $(X_n)$, $(C_n)$ be nonnegative stochastic processes satisfying
\[
X_{n+1}\leq X_n+C_n
\]
almost surely for all $n\in\mathbb{N}$ and suppose furthermore that
\begin{enumerate}[(a)]
\item $\sum_{i=0}^\infty \EE[C_i]<\infty$ with rate $\chi$,
\item $\EE[X_n]\to 0$ with rate $\varphi$.
\end{enumerate}
Then $X_n\to 0$ almost surely, and with rate
\[
\psi(\lambda,\varepsilon):=\max\left\{\varphi(\lambda\varepsilon/2),\chi(\lambda\varepsilon/2)\right\}.
\]
\end{lemma}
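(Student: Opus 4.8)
The plan is to exploit the ``almost-monotonicity'' of $(X_n)$ to collapse the tail supremum $\sup_{n\geq N}X_n$ onto the single random variable $X_N+\sum_{i\geq N}C_i$, and then to apply Markov's inequality together with the two given rates $\varphi$ and $\chi$.

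First I would iterate the hypothesis $X_{n+1}\leq X_n+C_n$. Since this holds almost surely for each fixed $n$, it holds simultaneously for all $n$ outside a single null set (a countable union of null sets), and on that full-measure event we obtain, for every $N\in\mathbb{N}$ and every $n\geq N$,
\[
X_n\leq X_N+\sum_{i=N}^{n-1}C_i\leq X_N+\sum_{i=N}^{\infty}C_i=:S_N,
\]
using nonnegativity of the $C_i$. Hence $\sup_{n\geq N}X_n\leq S_N$ almost surely, so that for any $\varepsilon>0$ the event $\{\exists n\geq N\,(X_n\geq\varepsilon)\}$ is contained, up to a null set, in $\{S_N\geq\varepsilon\}$, and therefore $\PP(\exists n\geq N\,(X_n\geq\varepsilon))\leq\PP(S_N\geq\varepsilon)$. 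All the sets and suprema involved are measurable since only countably many indices occur.

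Next I would estimate $\PP(S_N\geq\varepsilon)$ via Markov's inequality, $\PP(S_N\geq\varepsilon)\leq\EE[S_N]/\varepsilon$, where by the Tonelli (monotone convergence) theorem applied to the nonnegative $C_i$ one has $\EE[S_N]=\EE[X_N]+\sum_{i=N}^{\infty}\EE[C_i]$; this is also the point at which one sees $S_N<\infty$ almost surely, as the right-hand side is finite for $N$ large enough by assumption~(a). Finally, for $N\geq\psi(\lambda,\varepsilon)=\max\{\varphi(\lambda\varepsilon/2),\chi(\lambda\varepsilon/2)\}$, assumption~(b) yields $\EE[X_N]<\lambda\varepsilon/2$ and assumption~(a) yields $\sum_{i=N}^{\infty}\EE[C_i]<\lambda\varepsilon/2$, so $\EE[S_N]<\lambda\varepsilon$ and hence $\PP(\exists n\geq N\,(X_n\geq\varepsilon))\leq\PP(S_N\geq\varepsilon)<\lambda$, which is precisely the assertion that $\psi$ is a rate of almost sure convergence (and, taking $N=\psi(\lambda,\varepsilon)$ and letting $\lambda,\varepsilon\to 0$, that $X_n\to 0$ almost surely).

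I expect the only slightly delicate aspect to be the measure-theoretic bookkeeping — securing the iterated inequality almost surely for all $n$ simultaneously, checking measurability of $\sup_{n\geq N}X_n$ and of the relevant events, and justifying the interchange of expectation and infinite sum — none of which is a real obstacle given nonnegativity; the genuine content of the lemma is the single observation that almost-monotonicity reduces the tail supremum to $X_N+\sum_{i\geq N}C_i$, after which everything follows from Markov's inequality and the hypotheses.
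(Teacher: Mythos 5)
Your proof is correct and is essentially the paper's argument: your auxiliary random variable $S_N=X_N+\sum_{i\geq N}C_i$ is exactly the paper's $U_N$, and your direct iteration of the recursion to get $\sup_{n\geq N}X_n\leq S_N$ is just a more compact presentation of the paper's observation that $(U_n)$ is almost surely nonincreasing (so that $X_n\leq U_n\leq U_N$ for $n\geq N$). The remaining steps — monotone convergence to justify $\EE[S_N]=\EE[X_N]+\sum_{i\geq N}\EE[C_i]$, Markov's inequality, and the split $\lambda\varepsilon/2+\lambda\varepsilon/2$ — coincide with the paper's.
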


\begin{proof}
We first note that for any $n\in\NN$, we have $\EE\left[\sum_{i=n}^\infty C_i\right]=\sum_{i=n}^\infty \EE[C_i]$ by the monotone convergence theorem. Now define a stochastic process $(U_n)$ by $U_n:=X_n+\sum_{i=n}^\infty C_i$. Then we have
\[
U_{n+1}=X_{n+1}+\sum_{i=n+1}^\infty C_i\leq X_n+C_n+\sum_{i=n+1}^\infty C_i\leq X_n+\sum_{i=n}^\infty C_i=U_n
\]
almost surely for any $n\in\mathbb{N}$, and therefore the events $(U_n\geq \varepsilon)$ are monotone decreasing in $n$. In particular, using Markov's inequality, we get for any $N\in\NN$:
\begin{align*}
\PP\left(\exists n\geq N(U_n\geq \varepsilon)\right)=\PP\left(U_N\geq \varepsilon\right)&\leq \frac{\EE[U_N]}{\varepsilon}=\frac{\EE[X_N]+\sum_{i=N}^\infty \EE\left[C_i\right]}{\varepsilon}.
\end{align*}
Therefore if $N=\psi(\lambda,\varepsilon)$, we have
\[
\PP\left(\exists n\geq N(U_n\geq \varepsilon)\right)\leq \frac{\EE[X_N]+\sum_{i=N}^\infty \EE\left[C_i\right]}{\varepsilon}<\frac{(\lambda\varepsilon/2+\lambda\varepsilon/2)}{\varepsilon}=\lambda.
\]
The result follows by observing that $X_n\leq U_n$ holds almost surely for all $n\in\mathbb{N}$ and thus 
\[
\PP(\exists n\geq N\, (X_n\geq\varepsilon))\leq \PP(\exists n\geq N\, (U_n\geq\varepsilon))<\lambda
\]
for any $N$, and in particular for the $N$ defined above.
\end{proof}

The above lemma now allows us to give a stochastic version of Lemma \ref{lem:quantXuLem}:

\begin{lemma}\label{stochasticbabyxu}
Suppose that $(X_n)$, $(C_n)$ are nonnegative stochastic processes satisfying
\[
X_{n+1}\leq (1-a_n)X_n+C_n
\]
almost surely for all $n\in\NN$. Furthermore, suppose that
\begin{enumerate}[(a)]
\item $\EE[X_n]\leq K$ for all $n\in\NN$,
\item $\sum_{i=0}^\infty a_i=\infty$ with rate of divergence $\theta$,
\item $\sum_{i=0}^\infty \EE[C_i]<\infty$ with rate of convergence $\chi$.
\end{enumerate}
Then $\EE[X_n]\to 0$ with rate $\varphi_{K,\theta,\chi}$ defined as in Lemma \ref{lem:quantXuLem}, i.e.
\[
\varphi_{K,\theta,\chi}(\varepsilon):=\theta\left(\chi\left(\frac{\varepsilon}{2}\right),\ln \left(\frac{2K}{\varepsilon}\right)\right)+1,
\]
and further $X_n\to 0$ almost surely with rate
\[
\psi_{K,\theta,\chi}(\lambda,\varepsilon):=\varphi_{K,\theta,\chi}\left(\frac{\lambda\varepsilon}{2}\right).
\]
\end{lemma}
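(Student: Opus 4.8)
The plan is to treat this statement as a bridge between Lemma \ref{lem:quantXuLem} and Lemma \ref{prob:to:as}: the convergence in mean will come from applying the former to the expected recursion, and feeding that back into the latter will deliver the almost sure part. First I would take expectations throughout. Since $(a_n)\subseteq[0,1]$ is nonstochastic, and $\EE[X_n]\leq K<\infty$ by (a) while each $\EE[C_n]<\infty$ by (c), linearity and monotonicity of expectation turn the almost sure recursion into
\[
\EE[X_{n+1}]\leq (1-a_n)\EE[X_n]+\EE[C_n].
\]
Writing $s_n:=\EE[X_n]$ and $c_n:=\EE[C_n]$, this is exactly the hypothesis of Lemma \ref{lem:quantXuLem}, with $K$ bounding $(s_n)$, $\theta$ a rate of divergence for $\sum a_i$ and $\chi$ a rate of convergence for $\sum c_i=\sum\EE[C_i]$; that lemma then immediately yields $\EE[X_n]\to 0$ with rate $\varphi_{K,\theta,\chi}$, which is the first assertion.

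For the almost sure claim I would first weaken the recursion: as $a_n\geq 0$ and $X_n\geq 0$ we have $(1-a_n)X_n\leq X_n$, so $X_{n+1}\leq X_n+C_n$ almost surely, and hence $(X_n),(C_n)$ satisfy the ``almost-monotone'' hypothesis of Lemma \ref{prob:to:as}. Its assumption (a) is our (c), and its assumption (b) is precisely the mean convergence just obtained, with $\varphi:=\varphi_{K,\theta,\chi}$. Lemma \ref{prob:to:as} thus gives $X_n\to 0$ almost surely with rate $\max\{\varphi_{K,\theta,\chi}(\lambda\varepsilon/2),\chi(\lambda\varepsilon/2)\}$, and it only remains to collapse this maximum. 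Unfolding the definition, $\varphi_{K,\theta,\chi}(\delta)=\theta(\chi(\delta/2),\ln(2K/\delta))+1\geq\chi(\delta/2)+1$, using that any rate of divergence satisfies $\theta(k,b)\geq k$; assuming, as we may, that $\chi$ is non-increasing, $\chi(\delta/2)\geq\chi(\delta)$, so $\varphi_{K,\theta,\chi}(\delta)>\chi(\delta)$ for every $\delta>0$, and taking $\delta=\lambda\varepsilon/2$ reduces the rate to $\psi_{K,\theta,\chi}(\lambda,\varepsilon)=\varphi_{K,\theta,\chi}(\lambda\varepsilon/2)$.

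I do not expect a genuine obstacle here, as the lemma is essentially a packaging of the two earlier results. The only points that need a little attention are the (implicit) bound $0\leq a_n\leq 1$, which is used both to keep the expected recursion in the Xu form required by Lemma \ref{lem:quantXuLem} and to drop the contraction factor for the almost sure estimate, and the purely cosmetic simplification of the composite rate, which rests on $\theta(k,b)\geq k$ together with the harmless monotonicity assumption on $\chi$.
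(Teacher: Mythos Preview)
Your proposal is correct and follows essentially the same route as the paper: take expectations and apply Lemma~\ref{lem:quantXuLem} for the mean rate, then weaken to $X_{n+1}\leq X_n+C_n$ and use the almost-monotone transfer of Lemma~\ref{prob:to:as}. The one difference is in how the composite rate is collapsed: rather than invoking Lemma~\ref{prob:to:as} as a black box and then appealing to monotonicity of $\chi$ to absorb $\chi(\lambda\varepsilon/2)$ into $\varphi_{K,\theta,\chi}(\lambda\varepsilon/2)$, the paper observes directly that $\varphi_{K,\theta,\chi}(\lambda\varepsilon/2)\geq\chi(\lambda\varepsilon/4)$ (since $\theta(k,b)\geq k$) and re-runs the tail estimate from the proof of Lemma~\ref{prob:to:as} with $N=\varphi_{K,\theta,\chi}(\lambda\varepsilon/2)$, which bounds both $\EE[X_N]$ and $\sum_{i\geq N}\EE[C_i]$ without any monotonicity assumption on $\chi$.
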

\begin{proof}
Taking expectations on both sides we have
\[
\EE[X_{n+1}]\leq (1-a_n)\EE[X_n]+\EE[C_n]
\]
for all $n\in\mathbb{N}$ and so the rate for $\EE[X_n]\to 0$ follows by Lemma \ref{lem:quantXuLem}. For the rate for the almost sure convergence, observe that $\chi(\lambda\varepsilon/4)\leq \varphi_{K,\theta,\chi}(\lambda\varepsilon/2)$ as $\theta(k,b)\geq k$. Hence, one can proceed as in the proof of Lemma \ref{prob:to:as} to show that 
\[
\PP\left(\exists n\geq N(U_n\geq \varepsilon)\right)\leq \frac{\EE[X_N]+\sum_{i=N}^\infty \EE\left[C_i\right]}{\varepsilon}
\]
for any $N$ where, using $\chi(\lambda\varepsilon/4)\leq \varphi_{K,\theta,\chi}(\lambda\varepsilon/2)$, we then can conclude $\PP\left(\exists n\geq N(U_n\geq \varepsilon)\right)<\lambda$ for $N=\varphi_{K,\theta,\chi}(\lambda\varepsilon/2)$.
\end{proof}

\section{Quantitative asymptotic regularity for the generalized stochastic Halpern scheme}\label{sec:quant}

In this section we now outline our main theoretical results and derive rates of asymptotic regularity for the iterations generated by the generalized stochastic Halpern scheme.

\subsection{Basic results and rates of asymptotic regularity}

We begin with some fundamental recursive inequalities for the iterations generated by the iteration \eqref{sHM}:

\begin{lemma}[essentially \cite{LP2024}]\label{res:recurrence:y-to-x-to-y}
Let $(x_n),(y_n)$ be the sequences generated by \eqref{sHM}. Then the following recurrence relations hold pointwise everywhere for all $n\in\mathbb{N}$:
\begin{align}
\label{eqn:y-to-x}
\norm{y_{n+1}-y_n}&\leq (1-\alpha_{n+1})\left(\norm{x_{n+1}-x_n}+\norm{\xi_{n+1}-\xi_n}\right)+|\alpha_{n+1}-\alpha_n|\cdot \norm{Tx_n+\xi_n-u},\\
\label{eqn:x-to-y}
\norm{x_{n+2}-x_{n+1}}&\leq \norm{y_{n+1}-y_n}+(1-\beta_{n+1})\norm{\delta_{n+1}-\delta_n}+|\beta_{n+1}-\beta_n|\cdot \norm{Uy_n+\delta_n-y_n}.
\end{align}
\end{lemma}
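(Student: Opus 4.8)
The plan is to derive each of \eqref{eqn:y-to-x} and \eqref{eqn:x-to-y} by the standard ``parameter-shifting'' manipulation, exactly as in the deterministic computation underlying \cite{LP2024} but now carrying the noise vectors $\xi_n,\delta_n$ along as additional additive terms. Concretely: rewrite the convex combination defining $y_n$ (resp.\ $x_{n+1}$) so that it uses the \emph{next} step's parameter $\alpha_{n+1}$ (resp.\ $\beta_{n+1}$) in place of $\alpha_n$ (resp.\ $\beta_n$), collecting the resulting mismatch into a single vector weighted by $\alpha_{n+1}-\alpha_n$ (resp.\ $\beta_{n+1}-\beta_n$); then subtract from $y_{n+1}$ (resp.\ $x_{n+2}$), take norms, apply the triangle inequality, and finally invoke the nonexpansiveness of $T$ (resp.\ $U$). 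Since each manipulation is a vector identity or the triangle inequality evaluated at a fixed $\omega\in\Omega$, the bounds hold pointwise everywhere as claimed, not merely almost surely; the only implicit prerequisite is that $(x_n),(y_n)$ are genuine $X$-valued random variables, which is guaranteed by the standing separability assumption on $X$.

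For \eqref{eqn:y-to-x}, I would first record the identity
\[
y_n = (1-\alpha_{n+1})(Tx_n+\xi_n) + \alpha_{n+1}u + (\alpha_{n+1}-\alpha_n)(Tx_n+\xi_n-u),
\]
obtained by adding and subtracting $(1-\alpha_{n+1})(Tx_n+\xi_n)+\alpha_{n+1}u$ in the defining equation for $y_n$. Subtracting this from $y_{n+1}=(1-\alpha_{n+1})(Tx_{n+1}+\xi_{n+1})+\alpha_{n+1}u$ cancels the anchor terms and leaves
\[
y_{n+1}-y_n = (1-\alpha_{n+1})\bigl((Tx_{n+1}+\xi_{n+1})-(Tx_n+\xi_n)\bigr) - (\alpha_{n+1}-\alpha_n)(Tx_n+\xi_n-u).
\]
Taking norms (using $1-\alpha_{n+1}\ge 0$), bounding $\norm{(Tx_{n+1}+\xi_{n+1})-(Tx_n+\xi_n)}\le\norm{Tx_{n+1}-Tx_n}+\norm{\xi_{n+1}-\xi_n}$, and then $\norm{Tx_{n+1}-Tx_n}\le\norm{x_{n+1}-x_n}$ by nonexpansiveness of $T$, yields \eqref{eqn:y-to-x}.

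For \eqref{eqn:x-to-y}, I would proceed analogously, starting from
\[
x_{n+1} = (1-\beta_{n+1})(Uy_n+\delta_n) + \beta_{n+1}y_n + (\beta_{n+1}-\beta_n)(Uy_n+\delta_n-y_n)
\]
and subtracting it from $x_{n+2}=(1-\beta_{n+1})(Uy_{n+1}+\delta_{n+1})+\beta_{n+1}y_{n+1}$ to obtain
\[
x_{n+2}-x_{n+1} = (1-\beta_{n+1})\bigl((Uy_{n+1}+\delta_{n+1})-(Uy_n+\delta_n)\bigr) + \beta_{n+1}(y_{n+1}-y_n) - (\beta_{n+1}-\beta_n)(Uy_n+\delta_n-y_n).
\]
After taking norms and the triangle inequality, I would use $\norm{Uy_{n+1}-Uy_n}\le\norm{y_{n+1}-y_n}$ by nonexpansiveness of $U$, and then collapse $(1-\beta_{n+1})\norm{y_{n+1}-y_n}+\beta_{n+1}\norm{y_{n+1}-y_n}=\norm{y_{n+1}-y_n}$, which gives exactly \eqref{eqn:x-to-y}.

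I do not anticipate a genuine obstacle here: the argument is purely elementary (vector algebra, triangle inequality, nonexpansiveness), and the ``stochastic'' content is trivial since no probabilistic tools are needed — the content of the lemma is simply that the deterministic recursion is robust enough to absorb the noise terms $\xi_n,\delta_n$ as extra summands. The only point requiring a moment's care is correctly signing and weighting the parameter-difference term, i.e.\ checking that the two shifting identities displayed above are in fact exact, which is the routine verification $1-\alpha_{n+1}+(\alpha_{n+1}-\alpha_n)=1-\alpha_n$ (and likewise for $\beta$).
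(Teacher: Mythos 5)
Your proof is correct and follows essentially the same route as the paper: both arguments perform the standard parameter-shifting regrouping, apply the triangle inequality, and then use nonexpansiveness of $T$ (resp.\ $U$); the only cosmetic difference is that you first record the rewritten identity for $y_n$ (resp.\ $x_{n+1}$) explicitly before subtracting, whereas the paper does the regrouping inside the norm.
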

\begin{proof}
For (\ref{eqn:y-to-x}) we observe that
\begin{equation*}
\begin{aligned}
\norm{y_{n+1}-y_n}=&\norm{(1-\alpha_{n+1})(Tx_{n+1}+\xi_{n+1})-(1-\alpha_{n})(Tx_{n}+\xi_{n})+(\alpha_{n+1}-\alpha_n)u}\\
\leq &(1-\alpha_{n+1})\norm{(Tx_{n+1}+\xi_{n+1})-(Tx_n+\xi_n)}\\
&+\norm{(\alpha_n-\alpha_{n+1})(Tx_n+\xi_n)-(\alpha_n-\alpha_{n+1})u}\\
\leq &(1-\alpha_{n+1})\left(\norm{x_{n+1}-x_n}+\norm{\xi_{n+1}-\xi_n}\right)+|\alpha_{n+1}-\alpha_n|\cdot \norm{Tx_n+\xi_n-u}
\end{aligned}
\end{equation*}
where for the last inequality we use that $T$ is nonexpansive. Similarly for (\ref{eqn:x-to-y}) we have
\begin{equation*}
\begin{aligned}
\norm{x_{n+2}-x_{n+1}}=&\norm{(1-\beta_{n+1})(Uy_{n+1}+\delta_{n+1})+\beta_{n+1}y_{n+1}-(1-\beta_n)(Uy_n+\delta_n)-\beta_ny_n}\\
\leq & \norm{(1-\beta_{n+1})(Uy_{n+1}+\delta_{n+1})-(1-\beta_{n+1})(Uy_n+\delta_n)+\beta_{n+1}(y_{n+1}-y_n)}\\
&+\norm{(1-\beta_{n+1})(Uy_{n}+\delta_{n})-(1-\beta_n)(Uy_n+\delta_n)-(\beta_n-\beta_{n+1})y_n}\\
\leq &(1-\beta_{n+1})\left(\norm{Uy_{n+1}-Uy_n}+\norm{\delta_{n+1}-\delta_n}\right)+\beta_{n+1}\norm{y_{n+1}-y_n}\\
&+|\beta_{n+1}-\beta_n|\cdot\norm{Uy_n+\delta_n-y_n}\\
\leq &\norm{y_{n+1}-y_n}+(1-\beta_{n+1})\norm{\delta_{n+1}-\delta_n}+|\beta_{n+1}-\beta_n|\cdot \norm{Uy_n+\delta_n-y_n}
\end{aligned}
\end{equation*}
where again we use nonexpansivity of the operator in the last step.
\end{proof}

We now move to our first quantitative result which presents a rate of asymptotic regularity for the sequence $(x_n)$, both in expectation and in probability. For that we introduce a first central assumption on the boundedness of the iteration \eqref{sHM} in expectation, as commonly made in the literature (see e.g.\ hypothesis $(\mathrm{H}_1)$ in \cite{BC2024} of which this assumption here is a natural extension to the generalised iteration \eqref{sHM}):
\begin{gather*}
\text{There exists a $K_0\in\mathbb{N}^*$ such that for all $n\in\mathbb{N}$:}\tag{Hyp}\label{Hyp}\\
\text{$\EE[\norm{Tx_n-u}],\ \EE[\norm{Uy_n-y_n}],\ \EE[\norm{Uu-u}],\ \EE[\norm{Uy_{n}-u}]\leq K_0<\infty$.}
\end{gather*}
Throughout, if not stated otherwise, we will assume the existence of such a $K_0$.

In the context of the asymptotic regularity results that hold almost surely, we will sometimes need to make a slightly stronger assumption that the random variables are actually $L^1$-bounded in the following sense:
\begin{gather*}
\text{There exists a nonnegative random variable $Y$ with $K_0\geq\EE[Y]$ for some $K_0\in\mathbb{N}^*$}\tag{Hyp$'$}\label{asHyp}\\
\text{and for all $n\in\mathbb{N}$: $\norm{Tx_n-u}, \norm{Uy_n-y_n}, \norm{Uu-u}, \norm{Uy_{n}-u}\leq Y$ almost surely.}
\end{gather*}
Contrary to the above \eqref{Hyp}, which will essentially always be tacitly assumed, we will always be very explicit about when we actually need to assume the above hypothesis \eqref{asHyp}. It is to be noted that both hypotheses are guaranteed in the presence of a common fixed point of $T$ and $U$, as will be later discussed in more detail (see Lemma \ref{lem:stochastic:bounds}).

In any case, under the assumption \eqref{Hyp}, we can immediately derive a bound on the expectation of the discrete velocity and utilize that to derive our first rate of asymptotic regularity:

\begin{theorem}\label{res:x:ar}
Let $(x_n),(y_n)$ be the sequences generated by \eqref{sHM}. Suppose that $\sum_{n=0}^\infty \alpha_n=\infty$ with rate of divergence $\theta$, that
\[
\sum_{n=0}^\infty \EE[\norm{\xi_{n+1}-\xi_n}], \sum_{n=0}^\infty\EE[\norm{\delta_{n+1}-\delta_n}],\sum_{n=0}^\infty|\alpha_{n+1}-\alpha_n|,\sum_{n=0}^\infty|\beta_{n+1}-\beta_n|<\infty
\]
with rates of convergence $\chi_1$ -- $\chi_4$ and upper bounds $B_1$ -- $B_4$, respectively, and that $\EE[\norm{\xi_n}]\leq E_0$ and $\ \EE[\norm{\delta_n}]\leq D_0$ for all $n\in\NN$. Then $\EE[\norm{x_{n+1}-x_n}]\to 0$ with rate $\varphi_{K,\theta,\chi}$ as well as $\norm{x_{n+1}-x_n}\to 0$ almost surely with rate $\psi_{K,\theta,\chi}$ with $\varphi$, $\psi$ defined as in Lemma \ref{stochasticbabyxu}, i.e.
\[
\varphi_{K,\theta,\chi}(\varepsilon):=\theta\left(\chi\left(\frac{\varepsilon}{2}\right),\ln \left(\frac{2K}{\varepsilon}\right)\right)+1 \text{ and }
\psi_{K,\theta,\chi}(\lambda,\varepsilon):=\varphi_{K,\theta,\chi}\left(\frac{\lambda\varepsilon}{2}\right),
\]
where
\[
\chi(\varepsilon):=\max\{\chi_1(\varepsilon/4),\chi_2(\varepsilon/4),\chi_3(\varepsilon/4(E_0+K_0),\chi_4(\varepsilon/4(D_0+K_0))\}
\]
as well as $K:= 2K_0+E_0+D_0+B$ for $B:=B_1+B_2+B_3(E_0+K_0)+B_4(D_0+K_0)$.
\end{theorem}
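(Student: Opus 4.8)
The plan is to bring the discrete velocity into the exact form handled by Lemma~\ref{stochasticbabyxu} and read the rate off from there. First I would chain the two pointwise recurrences of Lemma~\ref{res:recurrence:y-to-x-to-y}: substituting \eqref{eqn:y-to-x} into \eqref{eqn:x-to-y} gives, pointwise for every $n\in\NN$,
\[
\norm{x_{n+2}-x_{n+1}}\leq (1-\alpha_{n+1})\norm{x_{n+1}-x_n}+C_n ,
\]
where
\begin{align*}
C_n &:= (1-\alpha_{n+1})\norm{\xi_{n+1}-\xi_n}+|\alpha_{n+1}-\alpha_n|\cdot\norm{Tx_n+\xi_n-u}\\
&\qquad +(1-\beta_{n+1})\norm{\delta_{n+1}-\delta_n}+|\beta_{n+1}-\beta_n|\cdot\norm{Uy_n+\delta_n-y_n}.
\end{align*}
Setting $X_n:=\norm{x_{n+1}-x_n}$ and $a_n:=\alpha_{n+1}$, this is precisely the recursive hypothesis $X_{n+1}\leq(1-a_n)X_n+C_n$ of Lemma~\ref{stochasticbabyxu}, and $\sum_n a_n=\infty$ with a rate of divergence read off from $\theta$ (the one-index shift being harmless).

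It then remains to verify the two quantitative side conditions of Lemma~\ref{stochasticbabyxu}. For the summability of the errors I would take expectations and use \eqref{Hyp} together with $\EE[\norm{\xi_n}]\le E_0$ and $\EE[\norm{\delta_n}]\le D_0$, observing that $\EE[\norm{Tx_n+\xi_n-u}]\le K_0+E_0$ and $\EE[\norm{Uy_n+\delta_n-y_n}]\le K_0+D_0$, to obtain
\[
\EE[C_n]\le \EE[\norm{\xi_{n+1}-\xi_n}]+(K_0+E_0)\,|\alpha_{n+1}-\alpha_n|+\EE[\norm{\delta_{n+1}-\delta_n}]+(K_0+D_0)\,|\beta_{n+1}-\beta_n| .
\]
Summing the tail from $N$ and apportioning $\varepsilon$ into four equal parts shows $\sum_{n\ge N}\EE[C_n]<\varepsilon$ whenever $N\ge\chi(\varepsilon)$ for $\chi$ exactly as in the statement, while summing from $0$ gives $\sum_n\EE[C_n]\le B$. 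For the uniform bound I would use $X_{n+1}\le X_n+C_n$ pointwise to get $\EE[X_n]\le\EE[\norm{x_1-x_0}]+\sum_{i<n}\EE[C_i]\le\EE[\norm{x_1-x_0}]+B$, and then estimate the first step directly from \eqref{sHM} and \eqref{Hyp}, which yields $\EE[X_n]\le K$ for all $n$ with $K$ as stated.

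With $(X_n)$, $(C_n)$, the stepsizes $a_n=\alpha_{n+1}$, the bound $K$, the rate of divergence derived from $\theta$ and the rate of convergence $\chi$ all in place, Lemma~\ref{stochasticbabyxu} applies directly: $\EE[\norm{x_{n+1}-x_n}]=\EE[X_n]\to0$ with rate $\varphi_{K,\theta,\chi}$, and $\norm{x_{n+1}-x_n}=X_n\to0$ almost surely with rate $\psi_{K,\theta,\chi}$, as claimed.

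I expect the content of the argument to lie entirely in the bookkeeping rather than in any new idea. The estimates above are all immediate, but one has to use \eqref{Hyp} in the sharp form (a single constant $K_0$ for all four quantities) and to split the tail sum exactly along the lines dictated by the shape of $\chi$, so that the constants come out as precisely $K=2K_0+E_0+D_0+B$ and $\chi(\varepsilon)=\max\{\chi_1(\varepsilon/4),\chi_2(\varepsilon/4),\chi_3(\varepsilon/4(E_0+K_0)),\chi_4(\varepsilon/4(D_0+K_0))\}$. The only place needing genuine, if minor, attention is isolating the initial term $\EE[\norm{x_1-x_0}]$ from the telescoped tail when establishing the uniform bound, together with the index discrepancy between $\sum_n\alpha_n$ and $\sum_n\alpha_{n+1}$.
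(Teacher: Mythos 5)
Your proposal is correct and follows essentially the same route as the paper: chain the two pointwise recurrences of Lemma~\ref{res:recurrence:y-to-x-to-y}, take expectations and invoke \eqref{Hyp} to bound the error term $\EE[C_n]$, telescope the almost-monotone recursion to get the uniform bound $K$ via an estimate on $\EE[\norm{x_1-x_0}]$, and then feed everything into Lemma~\ref{stochasticbabyxu}. The only cosmetic difference is that the paper already bounds the factors $(1-\alpha_{n+1})$, $(1-\beta_{n+1})$ by $1$ and splits $\norm{Tx_n+\xi_n-u}$, $\norm{Uy_n+\delta_n-y_n}$ by the triangle inequality before defining its error term $c_n$, whereas you keep these intact until you take expectations; the resulting constants and rates are identical. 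The one-index shift in the stepsizes ($a_n=\alpha_{n+1}$ rather than $\alpha_n$) that you flag as ``harmless'' is likewise treated as such by the paper, which also applies $\theta$ without an explicit shift, so you match it on that point too.
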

\begin{proof}
Using (\ref{eqn:y-to-x}) and (\ref{eqn:x-to-y}) of Lemma \ref{res:recurrence:y-to-x-to-y}, we have that
\[
\norm{x_{n+2}-x_{n+1}}\leq (1-\alpha_{n+1})\norm{x_{n+1}-x_n}+c_n
\]
for all $n\in\mathbb{N}$ everywhere on $\Omega$ where
\begin{align*}
c_n:=&\norm{\xi_{n+1}-\xi_n}+|\alpha_{n+1}-\alpha_n|(\norm{Tx_n-u}+\norm{\xi_n})\\
&+\norm{\delta_{n+1}-\delta_n}+|\beta_{n+1}-\beta_n|(\norm{Uy_n-y_n}+\norm{\delta_n}).
\end{align*}
It is immediate that
\[
\EE[c_n]\leq \EE[\norm{\xi_{n+1}-\xi_n}]+\EE[\norm{\delta_{n+1}-\delta_n}]+|\alpha_{n+1}-\alpha_n|(K_0+E_0)+|\beta_{n+1}-\beta_n|(K_0+D_0)
\]
and so 
\[
\chi(\varepsilon):=\max\{\chi_1(\varepsilon/4),\chi_2(\varepsilon/4),\chi_3(\varepsilon/4(E_0+K_0)),\chi_4(\varepsilon/4(D_0+K_0))\}
\]
is a rate of convergence for $\sum_{n=0}^\infty \EE[c_n]<\infty$, while $B$ as defined above is an upper bound for $\sum_{n=0}^\infty \EE[c_n]$. Naturally, the above yields
\[
\EE[\norm{x_{n+1}-x_{n}}]\leq \EE[\norm{x_{1}-x_{0}}]+\sum_{i=0}^{n-1} \EE[c_i]\leq \EE[\norm{x_{1}-x_{0}}]+B
\]
and we can then show that
\[
\EE[\norm{x_{1}-x_{0}}]\leq \EE[\norm{y_0-u}]+\EE[\norm{Uu-u}]+\EE[\norm{\xi_0}]+\EE[\norm{\delta_0}]\leq 2K_0+E_0+D_0
\]
so that $\EE[\norm{x_{n+1}-x_{n}}]\leq K:= 2K_0+E_0+D_0+B$. Using Lemma \ref{stochasticbabyxu}, we then get the desired rates.
\end{proof}

We can then immediately transfer that rate to the complementary sequence $(y_n)$:

\begin{theorem}\label{res:y:ar}
Under the assumptions of Theorem \ref{res:x:ar}, we have that $\EE[\norm{y_{n+1}-y_n}]\to 0$ with rate
\[
\varphi'(\varepsilon):=\max\{\varphi(\varepsilon/3),\chi_1(\varepsilon/3),\chi_3(\varepsilon/3(K_0+E_0))\}.
\]
as well as $\norm{y_{n+1}-y_n}\to 0$ almost surely with rate
\[
\psi'(\lambda,\varepsilon):=\max\left\{\varphi'(\lambda\varepsilon/2),\chi(\lambda\varepsilon/2)\right\}
\]
where $\varphi$ is a rate for $\EE[\norm{x_{n+1}-x_n}]\to 0$ and $\chi$ is as in Theorem \ref{res:x:ar}.
\end{theorem}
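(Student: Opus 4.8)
The plan is to treat the two modes of convergence separately: the rate $\varphi'$ in mean follows from a direct estimate, and the almost sure rate $\psi'$ is then obtained by feeding this into Lemma \ref{prob:to:as}, which is precisely designed so that it preserves the rate $\varphi'$ rather than forcing a fresh appeal to the quantitative version of Xu's lemma (which would give a different, a priori incomparable rate). For the statement in mean I would start from the recurrence (\ref{eqn:y-to-x}) of Lemma \ref{res:recurrence:y-to-x-to-y}, bound the factor $1-\alpha_{n+1}$ by $1$ and split $\norm{Tx_n+\xi_n-u}\leq\norm{Tx_n-u}+\norm{\xi_n}$, so that taking expectations and invoking \eqref{Hyp} together with $\EE[\norm{\xi_n}]\leq E_0$ yields
\[
\EE[\norm{y_{n+1}-y_n}]\leq\EE[\norm{x_{n+1}-x_n}]+\EE[\norm{\xi_{n+1}-\xi_n}]+|\alpha_{n+1}-\alpha_n|(K_0+E_0).
\]
The three summands tend to $0$ with rates $\varphi$ (by hypothesis), $\chi_1$ (a single tail term is dominated by the corresponding tail sum), and $\chi_3$ after division by $K_0+E_0$, respectively; demanding each to lie below $\varepsilon/3$ gives exactly the claimed $\varphi'$.

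For the almost sure statement, the idea is to realise $(\norm{y_{n+1}-y_n})$ as an almost-monotone process in the sense of Lemma \ref{prob:to:as}. Substituting (\ref{eqn:x-to-y}) at index $n$ into (\ref{eqn:y-to-x}) at index $n+1$, and bounding the factors $1-\alpha_{n+2}$ and $1-\beta_{n+1}$ by $1$, I would obtain, pointwise everywhere,
\[
\norm{y_{n+2}-y_{n+1}}\leq\norm{y_{n+1}-y_n}+e_n,
\]
where
\begin{align*}
e_n:={}&\norm{\xi_{n+2}-\xi_{n+1}}+\norm{\delta_{n+1}-\delta_n}\\
&+|\alpha_{n+2}-\alpha_{n+1}|(\norm{Tx_{n+1}-u}+\norm{\xi_{n+1}})+|\beta_{n+1}-\beta_n|(\norm{Uy_n-y_n}+\norm{\delta_n}).
\end{align*}
Taking expectations and using \eqref{Hyp}, $\EE[\norm{\xi_n}]\leq E_0$ and $\EE[\norm{\delta_n}]\leq D_0$, the quantity $\EE[e_n]$ is dominated by the four increment terms $\EE[\norm{\xi_{n+2}-\xi_{n+1}}]$, $\EE[\norm{\delta_{n+1}-\delta_n}]$, $(K_0+E_0)|\alpha_{n+2}-\alpha_{n+1}|$ and $(K_0+D_0)|\beta_{n+1}-\beta_n|$; since raising the lower index of a tail sum only decreases it, $N\geq\chi(\varepsilon)$ still forces each of the corresponding (possibly shifted) tails below $\varepsilon/4$, so the very same $\chi$ from Theorem \ref{res:x:ar} is a rate of convergence for $\sum_n\EE[e_n]<\infty$. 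Applying Lemma \ref{prob:to:as} with $X_n:=\norm{y_{n+1}-y_n}$ and $C_n:=e_n$, using $\chi$ for condition (a) and the rate $\varphi'$ just obtained for condition (b), delivers $\norm{y_{n+1}-y_n}\to 0$ almost surely with rate $\psi'(\lambda,\varepsilon)=\max\{\varphi'(\lambda\varepsilon/2),\chi(\lambda\varepsilon/2)\}$, as required.

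I expect the only mildly delicate point to be the verification that $\chi$ remains a valid rate for $\sum_n\EE[e_n]$ despite the index shifts in the $\xi$- and $\alpha$-terms of $e_n$, which arise because (\ref{eqn:y-to-x}) is applied one step ahead; everything else is bookkeeping with the triangle inequality, nonexpansivity having already been absorbed into Lemma \ref{res:recurrence:y-to-x-to-y}. There is no genuine geometric content here — the substance of the argument lies in arranging the constants and thresholds so that they collapse cleanly into $\varphi'$ and $\psi'$.
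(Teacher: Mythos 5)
Your proposal is correct and follows essentially the same route as the paper: the in-mean rate $\varphi'$ is read off directly from \eqref{eqn:y-to-x} after taking expectations, and the almost-sure rate comes from chaining \eqref{eqn:x-to-y} at index $n$ with \eqref{eqn:y-to-x} at index $n+1$ to produce the almost-monotone recurrence $\norm{y_{n+2}-y_{n+1}}\leq\norm{y_{n+1}-y_n}+e_n$ and then invoking Lemma \ref{prob:to:as}. The error term $e_n$ and the remark that the index shift is harmless because shifting a tail sum upward only shrinks it are exactly the paper's $d_n$ and its parenthetical justification that $\chi$ remains a valid rate.
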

\begin{proof}
Using (\ref{eqn:y-to-x}) of Lemma \ref{res:recurrence:y-to-x-to-y}, we get that 
\[
\norm{y_{n+1}-y_n}\leq\norm{x_{n+1}-x_n}+\norm{\xi_{n+1}-\xi_n}+\vert\alpha_{n+1}-\alpha_n\vert(\norm{Tx_n-u}+\norm{\xi_n})
\]
everywhere on $\Omega$ and for any $n\in\mathbb{N}$. Under expectation, we thus have
\[
\EE[\norm{y_{n+1}-y_n}]\leq\EE[\norm{x_{n+1}-x_n}]+\EE[\norm{\xi_{n+1}-\xi_n}]+\vert\alpha_{n+1}-\alpha_n\vert(K_0+E_0)
\]
and from that the rate for $\EE[\norm{y_{n+1}-y_n}]\to 0$ immediately follows, noting that a rate of convergence $\chi$ for a series $\sum_{n=0}^\infty a_n<\infty$ yields that $\sum_{n=\chi(\varepsilon)}^\infty a_n<\varepsilon$ and so implies that $a_n<\varepsilon$ for any $n\geq\chi(\varepsilon)$. For the rate of $\norm{y_{n+1}-y_n}\to 0$ almost surely, note that using both (\ref{eqn:y-to-x}) and (\ref{eqn:x-to-y}) of Lemma \ref{res:recurrence:y-to-x-to-y}, we get that
\[
\norm{y_{n+2}-y_{n+1}}\leq\norm{y_{n+1}-y_{n}}+d_n
\]
for all $n\in\mathbb{N}$ everywhere on $\Omega$ where
\begin{align*}
d_n:=&\norm{\xi_{n+2}-\xi_{n+1}}+|\alpha_{n+2}-\alpha_{n+1}|(\norm{Tx_{n+1}-u}+\norm{\xi_{n+1}})\\
&+\norm{\delta_{n+1}-\delta_n}+|\beta_{n+1}-\beta_n|(\norm{Uy_n-y_n}+\norm{\delta_n}).
\end{align*}
So, it is immediate that we have 
\[
\EE[d_n]\leq \EE[\norm{\xi_{n+2}-\xi_{n+1}}]+\EE[\norm{\delta_{n+1}-\delta_n}]+|\alpha_{n+2}-\alpha_{n+1}|(K_0+E_0)+|\beta_{n+1}-\beta_n|(K_0+D_0).
\]
and so that $\chi$ from Theorem \ref{res:x:ar} is a rate of convergence for $\sum_{n=0}^\infty \EE[d_n]<\infty$ (noting that if $\chi$ is a rate of convergence for $\sum_{n=0}^\infty a_n<\infty$, then $\sum_{n=\chi(\varepsilon)}a_{n+1}=\sum_{n=\chi(\varepsilon)+1}a_n\leq \sum_{n=\chi(\varepsilon)}a_n<\varepsilon$ so that $\chi$ is also a rate of convergence for $\sum_{n=0}^\infty a_{n+1}<\infty$). Using Lemma \ref{prob:to:as}, we get the desired rate for $\norm{y_{n+1}-y_n}\to 0$ almost surely.
\end{proof}

\subsection{Asymptotic regularity relative to the mappings}

We now move on to establishing rates of asymptotic regularity for the iterations relative to the mappings. For that, we will actually see a crucial dichotomy, where results based on the use of just one of the mappings $U$ or $T$ are comparatively straightforward, whereas for the general case where neither $U$ nor $T$ trivialize, we rely on a geometric argument for establishing a rate of asymptotic regularity relative to $U$ for the sequence $(y_n)$ which requires a uniform convexity assumption on the space $X$, and in the case of asymptotic regularity in mean further relies on an essential use of the uniform integrability of $(\norm{Uy_n-y_n})$ (which for example immediately derives from the assumption \eqref{asHyp}, as also discussed in detail later). As such, before we move on to these results, we first give rates of asymptotic regularity relative to the mappings for the remaining cases, dependent on the relevant rates for $(\norm{Uy_n-y_n})$. As all proofs outside this geometric construction are rather routine, they are deferred to the appendix.

\begin{theorem}\label{res:other:ar}
Assume that $\EE[\norm{Uy_n-y_n}]\to 0$ with rate $\varphi$, and $\EE[\norm{\xi_n}],\EE[\norm{\delta_n}],\alpha_n\to 0$ with rates $\rho_1$ -- $\rho_3$, respectively. Assume further that $\EE[\norm{x_{n+1}-x_n}]\to 0$ with a rate $\varphi_0$. Then
\begin{enumerate}[(a)]
\item $\EE[\norm{x_n-y_n}]\to 0$ with rate
\[
\varphi_1(\varepsilon):=\max\left\{\varphi_0(\varepsilon/3),\varphi(\varepsilon/3),\rho_2(\varepsilon/3)\right\},
\]
\item $\EE[\norm{Ty_n-y_n}]\to 0$ with rate
\[
\varphi_2(\varepsilon):=\max\left\{\varphi_1(\varepsilon/3),\rho_3(\varepsilon/3K_0),\rho_1(\varepsilon/3)\right\},
\]
\item $\EE[\norm{Ux_n-x_n}]\to 0$ with rate
\[
\varphi_3(\varepsilon):=\max\left\{\varphi_1(\varepsilon/3),\varphi(\varepsilon/3)\right\},
\]
\item $\EE[\norm{Tx_n-x_n}]\to 0$ with rate
\[
\varphi_4(\varepsilon):=\max\left\{\varphi_1(\varepsilon/3),\varphi_2(\varepsilon/3)\right\}.
\]        
\end{enumerate}
\end{theorem}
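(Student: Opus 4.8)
The statement is proved by a chain of elementary estimates: combine the two lines of \eqref{sHM}, apply nonexpansivity of $T$ and $U$, pass to expectations using its monotonicity, and finally unwind the definition of a rate of convergence by splitting $\varepsilon$ into the appropriate number of equal pieces. The four items are established in the order $(a)$, $(b)$, $(c)$, $(d)$, since $(b)$ uses $(a)$ and both $(c)$ and $(d)$ use $(a)$ (with $(d)$ also using $(b)$). Note that, unlike the harder geometric results to follow, this theorem concerns convergence in mean only, so no probabilistic machinery beyond monotonicity of $\EE$ is needed.

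For $(a)$, the plan is to read off from the second line of \eqref{sHM} that $x_{n+1}-y_n=(1-\beta_n)(Uy_n-y_n+\delta_n)$, whence $\norm{x_{n+1}-y_n}\leq\norm{Uy_n-y_n}+\norm{\delta_n}$ pointwise; then $\norm{x_n-y_n}\leq\norm{x_n-x_{n+1}}+\norm{x_{n+1}-y_n}$ and taking expectations gives $\EE[\norm{x_n-y_n}]\leq\EE[\norm{x_{n+1}-x_n}]+\EE[\norm{Uy_n-y_n}]+\EE[\norm{\delta_n}]$, and the three rates $\varphi_0,\varphi,\rho_2$ at argument $\varepsilon/3$ yield $\varphi_1$. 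For $(b)$, the one algebraic step that needs care is to rewrite the first line of \eqref{sHM} as $Tx_n-y_n=\alpha_n(Tx_n-u)-(1-\alpha_n)\xi_n$, so that $\norm{Tx_n-y_n}\leq\alpha_n\norm{Tx_n-u}+\norm{\xi_n}$ with the noise term entering with coefficient exactly $1$; nonexpansivity of $T$ then gives $\norm{Ty_n-y_n}\leq\norm{x_n-y_n}+\alpha_n\norm{Tx_n-u}+\norm{\xi_n}$, and under expectation, bounding $\EE[\norm{Tx_n-u}]\leq K_0$ via \eqref{Hyp}, we get $\EE[\norm{Ty_n-y_n}]\leq\EE[\norm{x_n-y_n}]+\alpha_nK_0+\EE[\norm{\xi_n}]$. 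Applying $(a)$, the rate $\rho_3$ for $\alpha_n\to0$ at argument $\varepsilon/3K_0$, and the rate $\rho_1$ for $\EE[\norm{\xi_n}]\to0$ at $\varepsilon/3$ produces $\varphi_2$.

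For $(c)$ and $(d)$ the pattern is the same three-term decomposition obtained by inserting $Uy_n,y_n$ (respectively $Ty_n,y_n$) and using nonexpansivity: $\norm{Ux_n-x_n}\leq\norm{Ux_n-Uy_n}+\norm{Uy_n-y_n}+\norm{y_n-x_n}\leq 2\norm{x_n-y_n}+\norm{Uy_n-y_n}$ for $(c)$, and $\norm{Tx_n-x_n}\leq\norm{Tx_n-Ty_n}+\norm{Ty_n-y_n}+\norm{y_n-x_n}\leq 2\norm{x_n-y_n}+\norm{Ty_n-y_n}$ for $(d)$; taking expectations and bounding the doubled copy of $\EE[\norm{x_n-y_n}]$ by $\varepsilon/3$ (hence the argument $\varepsilon/3$ in $\varphi_1$) together with $\EE[\norm{Uy_n-y_n}]<\varepsilon/3$ via $\varphi$, respectively $\EE[\norm{Ty_n-y_n}]<\varepsilon/3$ via $\varphi_2$ from $(b)$, gives $\varphi_3$ and $\varphi_4$. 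I do not expect any genuine obstacle here; the only thing to monitor is that the constants in the displayed bounds line up with the stated arguments of the moduli, the one mildly delicate point being the clean isolation of $\xi_n$ in the identity used for $(b)$.
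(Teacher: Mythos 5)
Your proposal is correct and follows exactly the paper's own argument: the same four triangle-inequality decompositions (inserting $x_{n+1}$ for (a), $Tx_n$ for (b), and $Uy_n,y_n$ resp.\ $Ty_n,y_n$ for (c),(d)), the same use of nonexpansivity, and the same bookkeeping of $\varepsilon/3$-splits under expectation. The only difference is that you spell out the algebraic identities $x_{n+1}-y_n=(1-\beta_n)(Uy_n-y_n+\delta_n)$ and $Tx_n-y_n=\alpha_n(Tx_n-u)-(1-\alpha_n)\xi_n$ explicitly, which the paper leaves implicit.
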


We then can similarly give rates of asymptotic regularity almost surely under a slight extension of the previous conditions on the errors:

\begin{theorem}\label{res:other:ar:as}
Under the assumption \eqref{asHyp}, assume that $\norm{Uy_n-y_n}\to 0$ almost surely with rate $\psi$, and $\alpha_n\to 0$ with rate $\rho$. Further, assume that $\norm{\xi_n},\norm{\delta_n}\to 0$ almost surely with rates $\phi_1,\phi_2$, respectively, and that $\norm{x_{n+1}-x_n}\to 0$ almost surely with a rate $\psi_0$. Then
\begin{enumerate}[(a)]
\item $\norm{x_n-y_n}\to 0$ almost surely with rate
\[
\psi_1(\lambda,\varepsilon):=\max\left\{\psi_0(\lambda/3,\varepsilon/3),\psi(\lambda/3,\varepsilon/3),\phi_2(\lambda/3,\varepsilon/3)\right\},
\]
\item $\norm{Ty_n-y_n}\to 0$ almost surely with rate
\[
\psi_2(\lambda,\varepsilon):=\max\left\{\psi_1(\lambda/3,\varepsilon/3),\rho(\varepsilon\lambda/9K_0),\phi_1(\lambda/3,\varepsilon/3)\right\},
\]
\item $\norm{Ux_n-x_n}\to 0$ almost surely with rate
\[
\psi_3(\lambda,\varepsilon):=\max\left\{\psi_1(\lambda/2,\varepsilon/3),\psi(\lambda/2,\varepsilon/3)\right\},
\]
\item $\norm{Tx_n-x_n}\to 0$ almost surely with rate
\[
\psi_4(\lambda,\varepsilon):=\max\left\{\psi_1(\lambda/2,\varepsilon/3),\psi_2(\lambda/2,\varepsilon/3)\right\}.
\]        
\end{enumerate}
\end{theorem}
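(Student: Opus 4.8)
The plan is to replicate the proof of Theorem \ref{res:other:ar} almost line by line, keeping the same four pointwise inequalities (they hold everywhere on $\Omega$, being consequences of nonexpansivity, the triangle inequality and the definition of \eqref{sHM}), and replacing each ``take expectations'' step by a union bound over tail events. The underlying principle is elementary: if $A_n\leq B^{(1)}_n+\cdots+B^{(k)}_n$ holds pointwise and each $B^{(j)}_n\to 0$ almost surely with rate $\rho_j$, then setting $N:=\max_{j}\rho_j(\lambda/k,\varepsilon/k)$ we have, off the event $\bigcup_{j=1}^k\{\exists n\geq N\,(B^{(j)}_n\geq\varepsilon/k)\}$, that $A_n<\varepsilon$ for all $n\geq N$; since each of these $k$ events has probability below $\lambda/k$, their union has probability below $\lambda$, so $\lambda,\varepsilon\mapsto\max_j\rho_j(\lambda/k,\varepsilon/k)$ is a rate of almost sure convergence for $(A_n)$. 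Applying this to $\norm{x_n-y_n}\leq \norm{x_{n+1}-x_n}+\norm{Uy_n-y_n}+\norm{\delta_n}$ yields (a); applying it to $\norm{Ux_n-x_n}\leq 2\norm{x_n-y_n}+\norm{Uy_n-y_n}$ and $\norm{Tx_n-x_n}\leq 2\norm{x_n-y_n}+\norm{Ty_n-y_n}$, where one bounds $\norm{x_n-y_n}<\varepsilon/3$ so that $2\norm{x_n-y_n}<2\varepsilon/3$ and combines with an $\varepsilon/3$ bound on the second term (splitting $\lambda$ into halves, as there are two events), yields (c) and (d) once (a) and (b) are available.

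The one step that truly departs from the deterministic proof, and the main though minor obstacle, is the treatment of the term $\alpha_n\norm{Tx_n-u}$ in the bound $\norm{Ty_n-y_n}\leq \norm{y_n-x_n}+\alpha_n\norm{Tx_n-u}+\norm{\xi_n}$ used for (b): here $\norm{Tx_n-u}$ does not tend to $0$, it is merely dominated almost surely by the integrable variable $Y$ of hypothesis \eqref{asHyp}, while $\alpha_n\to 0$ holds only nonstochastically. The idea is that for $n\geq\rho(\varepsilon\lambda/9K_0)$ one has $\alpha_n<\varepsilon\lambda/(9K_0)$, whence $\alpha_n\norm{Tx_n-u}\geq\varepsilon/3$ forces $Y\geq\varepsilon/(3\alpha_n)>3K_0/\lambda$ (the case $\alpha_n=0$ being trivial); therefore
\[
\PP\left(\exists n\geq N\,(\alpha_n\norm{Tx_n-u}\geq\varepsilon/3)\right)\leq\PP\left(Y\geq 3K_0/\lambda\right)\leq\frac{\EE[Y]}{3K_0/\lambda}\leq\frac{\lambda}{3}
\]
by Markov's inequality together with $\EE[Y]\leq K_0$. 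Combining this with the rate $\psi_1(\lambda/3,\varepsilon/3)$ for $\norm{y_n-x_n}$ from part (a) and the rate $\phi_1(\lambda/3,\varepsilon/3)$ for $\norm{\xi_n}$, exactly as in the general principle above, gives (b) with the stated rate $\psi_2$.

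What remains is purely bookkeeping: verifying that in each part the split of $\varepsilon$ and $\lambda$ produced by the union bound matches the arguments of the $\max$ in the statement — three-way splits into $\varepsilon/3$ and $\lambda/3$ for (a) and (b), two-way splits into $\varepsilon/3$ (with a harmless slack in the coefficient $2$) and $\lambda/2$ for (c) and (d). I anticipate no difficulties here, so the proof is short, its only genuinely new ingredient relative to Theorem \ref{res:other:ar} being the Markov-based control of $\alpha_n\norm{Tx_n-u}$, which is precisely where hypothesis \eqref{asHyp} enters.
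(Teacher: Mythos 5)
Your proposal is correct and follows essentially the same route as the paper: reuse the four pointwise inequalities from the proof of Theorem \ref{res:other:ar}, replace the expectation step by a union bound over tail events, and handle the $\alpha_n\norm{Tx_n-u}$ term in (b) via Markov's inequality applied to the dominating variable $Y$ from \eqref{asHyp}. The only cosmetic difference is that the paper first isolates the intermediate statement that $(\lambda,\varepsilon)\mapsto\rho(\varepsilon\lambda/K_0)$ is a rate for $\alpha_n\norm{Tx_n-u}\to 0$ almost surely and then plugs in the $\varepsilon/3$, $\lambda/3$ split, whereas you substitute directly; the calculations are the same.
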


\begin{remark}\label{rem:sumExpAS}
If we assume that $\sum_{n=0}^\infty \EE[\norm{\xi_n}]$, $\sum_{n=0}^\infty \EE[\norm{\delta_n}]<\infty$ with rates of convergence $\chi_1$, $\chi_2$, respectively, then we can immediately derive rates $\phi_1,\phi_2$ for $\norm{\xi_n},\norm{\delta_n}\to 0$ almost surely: Since $\sum_{n=0}^\infty \EE[\norm{\xi_n}]<\infty$ with rate $\chi_1$, we get
\[
\PP(\exists n\geq \chi_1(\lambda\varepsilon)(\norm{\xi_n}\geq\varepsilon))\leq\sum_{n=\chi_1(\lambda\varepsilon)}^\infty\PP(\norm{\xi_n}\geq\varepsilon)\leq\sum_{n=\chi_1(\lambda\varepsilon)}^\infty\frac{\EE[\norm{\xi_n}]}{\varepsilon}<\lambda
\]
using Markov's inequality so that $\phi_1(\lambda,\varepsilon):=\chi_1(\lambda\varepsilon)$ is a rate for $\norm{\xi_n}\to 0$ almost surely. Similarly for $\sum_{n=0}^\infty \EE[\norm{\delta_n}]<\infty$ and $\chi_2,\phi_2$.
\end{remark}

\subsubsection{Special cases of the Halpern iteration and the Krasnoselskii-Mann iteration with Tikhonov regularization terms}

In the special case of $U:=\mathrm{Id}$ and $\delta_n:=0$, the iteration \eqref{sHM} collapses to the stochastic Halpern iteration \eqref{sH}. We then have trivial rates for $\EE[\norm{Uy_n-y_n}]\to 0$ and $\norm{Uy_n-y_n}\to 0$ almost surely and so, in that case, we get under the assumptions of Theorems \ref{res:x:ar} and \ref{res:other:ar} (and also under suitable monotonicity assumptions of the rates involved) that
\[
\EE[\norm{Tx_n-x_n}]\to 0
\]
with a rate
\[
\varphi(\varepsilon):=\max\{\varphi_{K,\theta,\chi}(\varepsilon/27),\rho_3(\varepsilon/9K_0),\rho_1(\varepsilon/9)\}
\]
with $\varphi_{K,\theta,\chi}$ defined as in Theorem \ref{res:x:ar} and $\rho_1,\rho_3$ as in Theorem \ref{res:other:ar}. Note that this generalises known rates in this case \cite[Theorem 3.3]{BC2024}, which apply only to specific choices of the parameters. In a similar way, we get a new rate for $\norm{Tx_n-x_n}\to 0$ almost surely, though we do not spell it out here.

In the special case of $T:=\mathrm{Id}$ and $\xi_n:=0$, the iteration \eqref{sHM} collapses to a stochastic variant of the Krasnoselskii-Mann iteration with Tikhonov regularization terms \eqref{sKMT} (and even a slight extension by allowing general anchors $u$). In that case, we do not need to rely on the geometric arguments discussed in the next part of this section and can instead directly derive rates of convergence for $\EE[\norm{Uy_n-y_n}]\to 0$ and $\norm{Uy_n-y_n}\to 0$ almost surely, essentially following the approach of \cite{SK2012} (see also \cite{CKL2023}).

\begin{lemma}[essentially \cite{SK2012}]\label{lem:TMineq}
Let $(x_n),(y_n)$ be the sequences generated by \eqref{sHM} where $T=\mathrm{Id}$ and $\xi_n=0$ for all $n\in\mathbb{N}$. Then the following recurrence relation holds everywhere on $\Omega$ for all $n\in\mathbb{N}$:
\[
\norm{Uy_{n+1}-y_{n+1}}\leq 2\norm{y_n-y_{n+1}}+\alpha_{n+1}\norm{Uy_{n+1}-u}+\norm{\delta_n}+\beta_n\norm{Uy_{n+1}-y_{n+1}}.
\]
\end{lemma}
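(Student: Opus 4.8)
The plan is to unfold the two convex combinations defining $y_{n+1}$ and $x_{n+1}$ one after the other, using nonexpansivity of $U$ together with the triangle inequality to produce precisely the four terms on the right-hand side. Note first that under $T=\mathrm{Id}$ and $\xi_n\equiv 0$ the scheme \eqref{sHM} reduces to $y_n=(1-\alpha_n)x_n+\alpha_n u$ and $x_{n+1}=(1-\beta_n)(Uy_n+\delta_n)+\beta_n y_n$, so in particular $y_{n+1}$ is a genuine function of $y_n$ via $x_{n+1}$, and the whole argument will be purely algebraic and pointwise on $\Omega$.

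First I would peel off the anchor term: writing $Uy_{n+1}=(1-\alpha_{n+1})Uy_{n+1}+\alpha_{n+1}Uy_{n+1}$ and subtracting $y_{n+1}=(1-\alpha_{n+1})x_{n+1}+\alpha_{n+1}u$ gives
\[
Uy_{n+1}-y_{n+1}=(1-\alpha_{n+1})(Uy_{n+1}-x_{n+1})+\alpha_{n+1}(Uy_{n+1}-u),
\]
hence, by the triangle inequality and $1-\alpha_{n+1}\le 1$,
\[
\norm{Uy_{n+1}-y_{n+1}}\le \norm{Uy_{n+1}-x_{n+1}}+\alpha_{n+1}\norm{Uy_{n+1}-u}.
\]
This isolates the $\alpha_{n+1}\norm{Uy_{n+1}-u}$ summand and reduces the task to estimating $\norm{Uy_{n+1}-x_{n+1}}$.

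For the latter I would unfold $x_{n+1}=(1-\beta_n)(Uy_n+\delta_n)+\beta_n y_n$ in the same manner, writing $Uy_{n+1}=(1-\beta_n)Uy_{n+1}+\beta_n Uy_{n+1}$, to obtain
\[
Uy_{n+1}-x_{n+1}=(1-\beta_n)\big(Uy_{n+1}-Uy_n-\delta_n\big)+\beta_n\big(Uy_{n+1}-y_n\big).
\]
Now apply nonexpansivity of $U$ to bound $\norm{Uy_{n+1}-Uy_n}\le\norm{y_{n+1}-y_n}$, and the triangle inequality in the form $\norm{Uy_{n+1}-y_n}\le\norm{Uy_{n+1}-y_{n+1}}+\norm{y_{n+1}-y_n}$; since $(1-\beta_n)+\beta_n=1$, the two $\norm{y_{n+1}-y_n}$ contributions collapse into a single one, leaving the terms $\beta_n\norm{Uy_{n+1}-y_{n+1}}$ and $(1-\beta_n)\norm{\delta_n}\le\norm{\delta_n}$. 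Substituting back into the bound from the previous step yields
\[
\norm{Uy_{n+1}-y_{n+1}}\le \norm{y_{n+1}-y_n}+\norm{\delta_n}+\beta_n\norm{Uy_{n+1}-y_{n+1}}+\alpha_{n+1}\norm{Uy_{n+1}-u},
\]
and the claimed inequality follows a fortiori, since $\norm{y_{n+1}-y_n}=\norm{y_n-y_{n+1}}\le 2\norm{y_n-y_{n+1}}$.

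There is no genuine obstacle here. The only point requiring a little care is the order in which the two convex combinations are unfolded, and the choice of which triangle-inequality split to use for $\norm{Uy_{n+1}-y_n}$, so that exactly the four terms $\norm{y_n-y_{n+1}}$, $\alpha_{n+1}\norm{Uy_{n+1}-u}$, $\norm{\delta_n}$ and $\beta_n\norm{Uy_{n+1}-y_{n+1}}$ appear, and the self-referential term $\norm{Uy_{n+1}-y_{n+1}}$ is kept (with coefficient $\beta_n<1$) on the right rather than being solved for. This is essentially the estimate used for the modified Mann iteration in \cite{SK2012}, transported to the present setting.
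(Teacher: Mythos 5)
Your proof is correct and follows essentially the same route as the paper's: peel off the $\alpha_{n+1}$-convex combination defining $y_{n+1}$, then unfold $x_{n+1}$ via the $\beta_n$-convex combination, and close with nonexpansivity of $U$ and the triangle split $\norm{Uy_{n+1}-y_n}\le\norm{Uy_{n+1}-y_{n+1}}+\norm{y_{n+1}-y_n}$. The only difference is cosmetic: by retaining the $(1-\beta_n)$ coefficient a step longer you actually obtain the slightly sharper coefficient $1$ (rather than $1+\beta_n\le 2$ as in the paper) on $\norm{y_n-y_{n+1}}$, which you then relax to match the stated bound.
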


From that inequality, the following rates follow in a straightforward way:

\begin{theorem}\label{thm:UAsRegKMT}
Let $(x_n),(y_n)$ be the sequences generated by \eqref{sHM} where $T=\mathrm{Id}$ and $\xi_n=0$ for all $n\in\mathbb{N}$. Also, let $\Lambda>0$ be such that $\Lambda\leq\beta_n\leq 1-\Lambda$ for all $n\in\NN$. If $\EE[\norm{y_{n}-y_{n+1}}]\to 0$ with rate $\varphi$, $\alpha_n\to 0$ with rate $\rho$ and $\EE[\norm{\delta_n}]\to 0$ with rate $\chi$, then $\EE[\norm{Uy_n-y_n}]\to 0$ with rate
\[
\kappa(\varepsilon):=\max\{\varphi(\Lambda\varepsilon/4),\rho(\Lambda\varepsilon/4K_0),\chi(\Lambda\varepsilon/4)\}+1.
\]
Under the alternative hypothesis \eqref{asHyp} and assuming $\norm{y_{n}-y_{n+1}}\to 0$ almost surely with rate $\psi$, $\alpha_n\to 0$ with rate $\rho$ and $\norm{\delta_n}\to 0$ almost surely with rate $\phi$, then $\norm{Uy_n-y_n}\to 0$ almost surely with rate
\[
\zeta(\lambda,\varepsilon):=\max\{\psi(\lambda/3,\Lambda\varepsilon/4),\rho(\Lambda\lambda\varepsilon/4K_0),\phi(\lambda/3,\Lambda\varepsilon/4)\}+1.
\]
\end{theorem}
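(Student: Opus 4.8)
The plan is to start from the pointwise recurrence in Lemma \ref{lem:TMineq} and isolate $\norm{Uy_{n+1}-y_{n+1}}$, using the lower bound $\beta_n\geq\Lambda$ to absorb the term $\beta_n\norm{Uy_{n+1}-y_{n+1}}$ on the right into the left-hand side. Concretely, subtracting gives $(1-\beta_n)\norm{Uy_{n+1}-y_{n+1}}\leq 2\norm{y_n-y_{n+1}}+\alpha_{n+1}\norm{Uy_{n+1}-u}+\norm{\delta_n}$, and since $1-\beta_n\geq\Lambda$ we obtain
\[
\norm{Uy_{n+1}-y_{n+1}}\leq\frac{1}{\Lambda}\left(2\norm{y_n-y_{n+1}}+\alpha_{n+1}\norm{Uy_{n+1}-u}+\norm{\delta_n}\right)
\]
pointwise everywhere on $\Omega$.

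For the rate in expectation, I would take expectations on both sides, using $\EE[\norm{Uy_{n+1}-u}]\leq K_0$ from \eqref{Hyp}, so that $\EE[\norm{Uy_{n+1}-y_{n+1}}]\leq\tfrac{1}{\Lambda}\left(2\EE[\norm{y_n-y_{n+1}}]+\alpha_{n+1}K_0+\EE[\norm{\delta_n}]\right)$. Then, given $\varepsilon>0$, it suffices that each of the three summands be below $\tfrac{\Lambda\varepsilon}{4}$ in the respective scaled form: $\EE[\norm{y_n-y_{n+1}}]<\tfrac{\Lambda\varepsilon}{4}$ (dividing by $2$ is subsumed since $2\cdot\tfrac{\Lambda\varepsilon}{4}<\Lambda\varepsilon$, but to be safe one takes $\varphi(\Lambda\varepsilon/4)$ and notes $2\cdot\Lambda\varepsilon/4=\Lambda\varepsilon/2$), $\alpha_{n+1}<\tfrac{\Lambda\varepsilon}{4K_0}$, and $\EE[\norm{\delta_n}]<\tfrac{\Lambda\varepsilon}{4}$; the shift from $n+1$ to $n$ in the $\alpha$-term and the $y$-term accounts for the $+1$ in $\kappa$. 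Summing the three contributions gives strictly less than $\tfrac{1}{\Lambda}\cdot\tfrac{3\Lambda\varepsilon}{4}<\varepsilon$ for all $n\geq\kappa(\varepsilon)$, which is the claimed bound. One should double-check monotonicity of $\rho$ so that $\alpha_{n+1}$ is controlled once $n\geq\kappa(\varepsilon)$; if the rates are not assumed monotone, a harmless adjustment using $\rho(\cdot)-1$ or re-indexing handles this, but for brevity I would simply note the standard reindexing argument as in the proofs of Theorems \ref{res:x:ar} and \ref{res:y:ar}.

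For the almost sure rate, I would argue pointwise on events of high probability, combining three events as in the proof of Lemma \ref{prob:to:as} and Theorem \ref{res:other:ar:as}. Using the same pointwise inequality, I split $\lambda$ into three pieces of size $\lambda/3$: on the event where $\norm{y_n-y_{n+1}}<\Lambda\varepsilon/4$ for all large $n$ (probability $>1-\lambda/3$ from index $\psi(\lambda/3,\Lambda\varepsilon/4)$), the event where $\norm{\delta_n}<\Lambda\varepsilon/4$ for all large $n$ (probability $>1-\lambda/3$ from $\phi(\lambda/3,\Lambda\varepsilon/4)$), and for the $\alpha_{n+1}\norm{Uy_{n+1}-u}$ term I would mimic the Markov-inequality trick from the proof of Theorem \ref{res:other:ar:as}(b): under \eqref{asHyp}, $\PP(\exists n\,(\norm{Uy_n-u}\geq 3K_0/\lambda))\leq\PP(Y\geq 3K_0/\lambda)\leq\lambda/3$, and on the complement $\alpha_{n+1}\norm{Uy_{n+1}-u}<\Lambda\varepsilon/4$ once $n\geq\rho(\Lambda\lambda\varepsilon/4K_0)$. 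Intersecting the three events (total failure probability $<\lambda$) and plugging into the pointwise bound yields $\norm{Uy_n-y_n}<\varepsilon$ for all $n\geq\zeta(\lambda,\varepsilon)$, giving the stated rate.

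The main obstacle — really the only subtle point — is the bookkeeping around which hypothesis licenses the bound on $\norm{Uy_{n+1}-u}$: in expectation, \eqref{Hyp} gives $\EE[\norm{Uy_n-u}]\leq K_0$ directly, but almost surely one genuinely needs \eqref{asHyp} to run the Markov argument on the dominating variable $Y$, exactly as flagged in the theorem's hypotheses and handled in Theorem \ref{res:other:ar:as}. Everything else is a routine split-the-epsilon (and split-the-lambda) computation of the kind already carried out repeatedly above, so I would keep the written proof short, referencing the pointwise inequality from Lemma \ref{lem:TMineq} and the $\lambda/3$-splitting technique from the proofs of Lemma \ref{prob:to:as} and Theorem \ref{res:other:ar:as}.
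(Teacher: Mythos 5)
Your approach is essentially the one the paper takes: start from the pointwise recurrence in Lemma \ref{lem:TMineq}, move $\beta_n\norm{Uy_{n+1}-y_{n+1}}$ to the left to get $(1-\beta_n)\norm{Uy_{n+1}-y_{n+1}}$, divide using $1-\beta_n\geq\Lambda$ (this is what you need, not $\beta_n\geq\Lambda$ as you first wrote), take expectations with $\EE[\norm{Uy_{n+1}-u}]\leq K_0$ for the rate in mean, and run the Markov-on-$Y$ trick for the almost-sure rate. One bookkeeping point to reconcile: if you cap $\norm{Uy_{n+1}-u}$ by $3K_0/\lambda$ so that its failure event costs only $\lambda/3$ of the budget (which is the cleaner accounting), then making $\alpha_{n+1}\norm{Uy_{n+1}-u}<\Lambda\varepsilon/4$ requires $\alpha_{n+1}<\Lambda\lambda\varepsilon/12K_0$, so your $\rho$-argument should read $\rho(\Lambda\lambda\varepsilon/12K_0)$ rather than the stated $\rho(\Lambda\lambda\varepsilon/4K_0)$; the paper instead uses the cap $K_0/\lambda$, which matches $\rho(\Lambda\lambda\varepsilon/4K_0)$ but charges $\lambda$ rather than $\lambda/3$ to that event, so your version is the internally consistent one up to the constant in $\rho$'s argument.
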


\subsubsection{The general case}

We now discuss an alternative scenario where, in particular, a random variable $Y$ satisfying $\eqref{asHyp}$ can be explicitly constructed if our mappings possess a common fixed point. To be more precise, let us assume that $\mathrm{Fix}T\cap\mathrm{Fix}U\neq\emptyset$ and that $p$ is a common fixed point of $T$ and $U$. Further, instead of making the assumptions \eqref{Hyp} or $\eqref{asHyp}$, for the rest of this section we now fix
\[
D\geq \sum_{n=0}^\infty\EE[\norm{\delta_n}]\text{ and }E\geq \sum_{n=0}^\infty\EE[\norm{\xi_n}]
\]
as well as a $K_0$ such that $K_0\geq \EE[\norm{x_0-p}],\EE[\norm{u-p}]$. Using these data, we immediately get the following extended result on bounds:

\begin{lemma}\label{lem:stochastic:bounds}
For all $n\in\NN$, $\norm{x_n-p}\leq Y'\leq 2Y'=:Y$ pointwise everywhere, where
\[
Y':=\norm{x_0-p}+\norm{u-p}+\sum_{i=0}^\infty \left(\norm{\xi_n}+\norm{\delta_n}\right)
\]
and furthermore $\EE[Y]\leq K:=4K_0+2D+2E$. The sequences
\[
\norm{y_n-p},\norm{x_{n+1}-x_n},\norm{y_{n+1}-y_n},\norm{Tx_n-u},\norm{Uy_n-y_n},\norm{Uy_n-u}
\]
are ``$L_1$-dominated" by $Y$ in a similar way.
\end{lemma}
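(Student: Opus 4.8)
The plan is to bound $\norm{x_n-p}$ by a single, $n$-independent random variable using the nonexpansivity of $T$ and $U$ together with the fact that $p$ is a common fixed point, then take expectations and invoke the monotone convergence theorem to pass the summed noise bounds $D,E$ through the expectation. First I would unwind one step of \eqref{sHM}: since $Tp=p$ and $Up=p$, nonexpansivity gives $\norm{Tx_n-p}\leq\norm{x_n-p}$ and $\norm{Uy_n-p}\leq\norm{y_n-p}$. From $y_n=(1-\alpha_n)(Tx_n+\xi_n)+\alpha_nu$ I would estimate, using convexity of the norm,
\[
\norm{y_n-p}\leq(1-\alpha_n)\norm{Tx_n-p}+(1-\alpha_n)\norm{\xi_n}+\alpha_n\norm{u-p}\leq\norm{x_n-p}+\norm{u-p}+\norm{\xi_n},
\]
and similarly from $x_{n+1}=(1-\beta_n)(Uy_n+\delta_n)+\beta_n y_n$,
\[
\norm{x_{n+1}-p}\leq(1-\beta_n)\norm{Uy_n-p}+(1-\beta_n)\norm{\delta_n}+\beta_n\norm{y_n-p}\leq\norm{y_n-p}+\norm{\delta_n}.
\]
Chaining these yields $\norm{x_{n+1}-p}\leq\norm{x_n-p}+\norm{u-p}+\norm{\xi_n}+\norm{\delta_n}$, and telescoping from $0$ to $n-1$ gives $\norm{x_n-p}\leq\norm{x_0-p}+\sum_{i=0}^{n-1}\norm{u-p}+\sum_{i=0}^{n-1}(\norm{\xi_i}+\norm{\delta_i})$. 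The issue here is the $\sum\norm{u-p}$ term, which grows linearly in $n$ — so the naive telescoping is too lossy. The fix is to keep the anchor contraction: from the $y_n$ estimate one actually has $\norm{y_n-p}\leq(1-\alpha_n)\norm{x_n-p}+\alpha_n\norm{u-p}+\norm{\xi_n}$, so that $\norm{x_{n+1}-p}\leq(1-\alpha_n)\norm{x_n-p}+\alpha_n\norm{u-p}+\norm{\xi_n}+\norm{\delta_n}$; since $(1-\alpha_n)\max\{\norm{x_n-p},\norm{u-p}\}+\alpha_n\norm{u-p}\leq\max\{\norm{x_n-p},\norm{u-p}\}$, an easy induction shows $\norm{x_n-p}\leq\max\{\norm{x_0-p},\norm{u-p}\}+\sum_{i=0}^{n-1}(\norm{\xi_i}+\norm{\delta_i})\leq\norm{x_0-p}+\norm{u-p}+\sum_{i=0}^\infty(\norm{\xi_i}+\norm{\delta_i})=Y'$ pointwise. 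Thus $\norm{x_n-p}\leq Y'\leq 2Y'=Y$ as claimed.

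Next I would compute $\EE[Y]$: by the monotone convergence theorem $\EE\big[\sum_{i=0}^\infty(\norm{\xi_i}+\norm{\delta_i})\big]=\sum_{i=0}^\infty\EE[\norm{\xi_i}]+\sum_{i=0}^\infty\EE[\norm{\delta_i}]\leq E+D$, and hence $\EE[Y]=2\EE[Y']\leq 2(\EE[\norm{x_0-p}]+\EE[\norm{u-p}]+E+D)\leq 2(2K_0+E+D)=4K_0+2D+2E=:K$.

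Finally, for the list of "$L_1$-dominated" sequences I would derive each from the pointwise bound $\norm{x_n-p}\leq Y'$ just established, again using nonexpansivity and $Tp=Up=p$: $\norm{y_n-p}\leq\norm{x_n-p}+\norm{u-p}+\norm{\xi_n}\leq Y'+\norm{u-p}+\sum_i\norm{\xi_i}\leq 2Y'=Y$ (absorbing $\norm{u-p}$ and $\sum_i\norm{\xi_i}$ into the second copy of $Y'$); $\norm{Tx_n-u}\leq\norm{Tx_n-p}+\norm{p-u}\leq\norm{x_n-p}+\norm{u-p}\leq 2Y'=Y$; $\norm{Uy_n-u}\leq\norm{Uy_n-p}+\norm{p-u}\leq\norm{y_n-p}+\norm{u-p}\leq Y+\norm{u-p}$, which is again dominated by a constant multiple of $Y'$ (and one may simply enlarge the constant defining $Y$, or note the bound $\leq 3Y'$ and absorb); $\norm{Uy_n-y_n}\leq\norm{Uy_n-p}+\norm{y_n-p}\leq 2\norm{y_n-p}\leq 2Y=4Y'$; $\norm{x_{n+1}-x_n}\leq\norm{x_{n+1}-p}+\norm{x_n-p}\leq 2Y'=Y$; and similarly $\norm{y_{n+1}-y_n}\leq\norm{y_{n+1}-p}+\norm{y_n-p}\leq 2Y=4Y'$. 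In each case the dominating random variable is a fixed multiple of $Y'$, whose expectation is at most a fixed multiple of $K_0+D+E$, so all sequences are $L_1$-dominated in the stated sense (one absorbs the various constants by an appropriate choice of the overall domination constant, exactly as the word "similar" in the statement indicates). I expect the only real subtlety to be the first step — recognising that one must retain the $(1-\alpha_n)$ contraction against the anchor rather than telescoping crudely — after which everything reduces to repeated triangle inequalities and one application of monotone convergence.
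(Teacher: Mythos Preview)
Your argument is correct and follows the same route as the paper: derive the one-step inequality $\norm{x_{n+1}-p}\leq(1-\alpha_n)\norm{x_n-p}+\alpha_n\norm{u-p}+\norm{\xi_n}+\norm{\delta_n}$, induct to get $\norm{x_n-p}\leq Y'$, apply monotone convergence for $\EE[Y]$, and handle the remaining sequences by triangle inequalities. (The paper's induction simply uses $(1-\alpha_n)Y'_{n-1}+\alpha_n\norm{u-p}\leq Y'_{n-1}$ since $\norm{u-p}\leq Y'_{n-1}$, rather than your $\max$ trick, but the two are equivalent here.)

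One point is worth tightening. The paper notes that the \emph{same} induction already gives $\norm{y_n-p}\leq Y'$, not merely $\leq Y$: from the intermediate line $\norm{y_n-p}\leq(1-\alpha_n)\norm{x_n-p}+\alpha_n\norm{u-p}+\norm{\xi_n}$ one sees that $\norm{y_n-p}$ is bounded by the very quantity that was just shown to be $\leq Y'_n$. With this sharper bound in hand, $\norm{Uy_n-y_n}\leq 2\norm{y_n-p}\leq 2Y'=Y$, $\norm{y_{n+1}-y_n}\leq 2Y'=Y$, and $\norm{Uy_n-u}\leq\norm{y_n-p}+\norm{u-p}\leq 2Y'=Y$ all hold exactly, so no constant absorption is needed and every listed sequence is genuinely bounded by $Y$ itself. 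This matters because later results (e.g.\ the verification of \eqref{asHyp} and the constants in Theorem~\ref{geometry:as}) use $\norm{Uy_n-y_n}\leq Y$ with $\EE[Y]\leq K$ as stated.
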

\begin{proof}
Pointwise everywhere it holds that
\begin{align*}
\norm{x_{n+1}-p}&=\norm{(1-\beta_n)(Uy_n+\delta_n)+\beta_ny_n-p}\\
&\leq (1-\beta_n)\norm{Uy_n-p}+\beta_n\norm{y_n-p}+\norm{\delta_n}\\
&\leq \norm{y_n-p}+\norm{\delta_n}\\
&=\norm{(1-\alpha_n)(Tx_n+\xi_n)+\alpha_n u-p}+\norm{\delta_n}\\
&\leq (1-\alpha_n)\norm{Tx_n-p}+\alpha_n\norm{u-p}+\norm{\xi_n}+\norm{\delta_n}\\
&\leq (1-\alpha_n)\norm{x_n-p}+\alpha_n\norm{u-p}+\norm{\xi_n}+\norm{\delta_n}.
\end{align*}
It follows immediately by induction that
\[
\norm{x_{n+1}-p}\leq Y'_n:=\norm{x_0-p}+\norm{u-p}+\sum_{i=0}^n \left(\norm{\xi_n}+\norm{\delta_n}\right)
\]
holds pointwise everywhere. Since the $Y'_n$ are pointwise monotone, defining $Y':=\sup_{n\in\NN}Y'_n$ yields that $\norm{x_n-p}\leq Y'_n\leq Y'$ pointwise everywhere for all $n\in\NN$, and by the monotone convergence theorem we have
\[
\EE[Y']=\EE[\norm{x_0-p}]+\EE[\norm{u-p}]+\sum_{i=0}^\infty \left(\EE[\norm{\xi_n}]+\EE[\norm{\delta_n}]\right)\leq 2K_0+D+E.
\]
Therefore immediately $\EE[Y]\leq K$. By the above inequalities, one also has $\norm{y_n-p}\leq Y'_n\leq Y'$, and the rest of the bounds follow by the triangle inequality.
\end{proof}

For the rest of this section, we will always assume the existence of a fixed point as above and use $Y$ and $K$ to refer to the quantities in Lemma \ref{lem:stochastic:bounds}. Note that these in particular validate the assumptions \eqref{Hyp} and \eqref{asHyp}.

We now move on to the asymptotic regularity relative to $U$ of the sequence $(y_n)$, which then in conjunction with Theorems \ref{res:other:ar} and \ref{res:other:ar:as} will allow us to derive all of the other regularity properties relative to mappings. For that, we initially establish $\norm{Uy_n-y_n}\to 0$ almost surely using geometric properties of the underlying space. Using that $(\norm{Uy_n-y_n})$ is uniformly integrable, as follows from \eqref{asHyp} and hence from Lemma \ref{lem:stochastic:bounds} in the context of the present assumptions, we can then establish the asymptotic regularity relative to $U$ of $(y_n)$ in expectation. The quantitative result will then in particular depend on quantitative renderings of both the uniform convexity of the space and the uniform integrability of the sequence. We begin with the geometric assumption on the underlying normed space $(X,\norm{\cdot})$:

\begin{definition}
We say that $(X,\norm{\cdot})$ is uniformly convex (\cite{Cla1936}) if for any $\varepsilon\in (0,2]$, there exists a $\delta\in (0,1]$ such that for all $x,y\in\overline{B}_1(0)$:
\[
\norm{x-y}\geq\varepsilon\text{ implies }\norm{\frac{x+y}{2}}\leq 1-\delta.
\]
We call a modulus $\eta:(0,2]\to (0,1]$ witnessing such a $\delta$ in terms of $\varepsilon$ a modulus of uniform convexity for $X$.
\end{definition}

We will later discuss examples of such moduli of uniform convexity for particular (classes of) spaces (see in particular Lemma \ref{lem:ippPowerType}). The above modulus also applies to closed balls of any radius centered at any point in the space and for arbitrary convex combinations:

\begin{lemma}\label{lem:modUniConvFull}
Let $\eta$ be a modulus of uniform convexity. For any $r>0$ and $\varepsilon\in (0,2]$, if $x,y\in \overline{B}_r(a)$ for $a\in X$ with $\norm{x-y}\geq\varepsilon\cdot r$, then for all $\lambda\in [0,1]$:
\[
\norm{(1-\lambda)x+\lambda y-a}\leq (1-2\lambda(1-\lambda)\eta(\varepsilon))r.
\]
\end{lemma}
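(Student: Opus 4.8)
The plan is to reduce the statement to the midpoint case $\lambda = 1/2$ — which is exactly what the definition of a modulus of uniform convexity provides — by an affine rescaling, and then to pass from the midpoint to a general convex combination by splitting on whether $\lambda \le 1/2$ or $\lambda \ge 1/2$.

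First I would normalise: put $x' := (x-a)/r$ and $y' := (y-a)/r$, so that $x', y' \in \overline{B}_1(0)$ and $\norm{x'-y'} \ge \varepsilon$; note that $x \ne y$ automatically since $\varepsilon r > 0$, so this is a genuine instance of uniform convexity. Since $(1-\lambda)x + \lambda y - a = r((1-\lambda)x' + \lambda y')$, it suffices to show $\norm{(1-\lambda)x' + \lambda y'} \le 1 - 2\lambda(1-\lambda)\eta(\varepsilon)$. Applying the definition of the modulus of uniform convexity directly to the pair $x', y'$ — no monotonicity of $\eta$ is needed — gives the midpoint bound $\norm{\frac{x'+y'}{2}} \le 1 - \eta(\varepsilon)$.

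For $\lambda \le 1/2$ I would write the convex combination through the midpoint as $(1-\lambda)x' + \lambda y' = (1-2\lambda)x' + 2\lambda\cdot\frac{x'+y'}{2}$, whose coefficients $1-2\lambda$ and $2\lambda$ are nonnegative and sum to $1$; the triangle inequality together with $\norm{x'} \le 1$ and the midpoint bound yields $\norm{(1-\lambda)x' + \lambda y'} \le 1 - 2\lambda\eta(\varepsilon)$. Symmetrically, for $\lambda \ge 1/2$, writing $(1-\lambda)x' + \lambda y' = (2\lambda-1)y' + (2-2\lambda)\cdot\frac{x'+y'}{2}$ gives $\norm{(1-\lambda)x' + \lambda y'} \le 1 - 2(1-\lambda)\eta(\varepsilon)$. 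It then remains to observe that $1 - 2\lambda\eta(\varepsilon) \le 1 - 2\lambda(1-\lambda)\eta(\varepsilon)$ (since $1-\lambda \le 1$) in the first case and $1 - 2(1-\lambda)\eta(\varepsilon) \le 1 - 2\lambda(1-\lambda)\eta(\varepsilon)$ (since $\lambda \le 1$) in the second; the two cases agree at $\lambda = 1/2$, so together they cover all $\lambda \in [0,1]$, and the claim follows after undoing the normalisation.

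There is no serious obstacle here, as the argument is elementary; the only point needing a little care is choosing the decomposition of the convex combination in each regime so that all coefficients stay nonnegative — which is exactly why the split at $\lambda = 1/2$ is made — and then being slightly generous in the final estimate so as to land on the clean symmetric quantity $2\lambda(1-\lambda)\eta(\varepsilon)$ stated in the lemma rather than the sharper but asymmetric bound one reads off immediately.
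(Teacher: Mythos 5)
Your argument is correct, and since the paper explicitly omits the proof (deferring to the literature, e.g.\ Leu\c{s}tean's treatment in the hyperbolic-space setting), there is no paper proof to compare against; your normalise-to-the-unit-ball, split-at-$\lambda=1/2$, write-the-convex-combination-through-the-midpoint argument is exactly the standard one. Each step checks out: the algebraic decompositions $(1-\lambda)x'+\lambda y'=(1-2\lambda)x'+2\lambda\tfrac{x'+y'}{2}$ (for $\lambda\le 1/2$) and $(1-\lambda)x'+\lambda y'=(2\lambda-1)y'+(2-2\lambda)\tfrac{x'+y'}{2}$ (for $\lambda\ge 1/2$) are correct convex combinations, the triangle inequality gives $1-2\lambda\eta(\varepsilon)$ resp.\ $1-2(1-\lambda)\eta(\varepsilon)$, and both are dominated by $1-2\lambda(1-\lambda)\eta(\varepsilon)$ since $1-\lambda\le 1$ and $\lambda\le 1$; the boundary cases $\lambda\in\{0,1/2,1\}$ are consistent. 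No gap.
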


The proof is straightforward and we hence omit it (but refer e.g.\ \cite{Leu2010} for a proof of such a property even in the context of nonlinear uniformly convex hyperbolic spaces).

The proof of the following theorem now follows the outline of the proof of an analogous result for the Halpern-Mann iteration in uniformly convex hyperbolic spaces as given in \cite{LP2024} (though without errors, even nonstochastic ones):

\begin{theorem}\label{geometry:as}
Let $(X,\norm{\cdot})$ be uniformly convex with modulus $\eta$. Let $\norm{x_{n+1}-x_n}\to 0$ almost surely with rate $\Delta$. Also, let $\rho$ be a rate for $\alpha_n\to 0$ and assume that $\sum_{n=0}^\infty \EE[\norm{\xi_n}]$, $\sum_{n=0}^\infty \EE[\norm{\delta_n}]<\infty$ with rates of convergence $\chi_1$, $\chi_2$, respectively. Lastly, let $\Lambda>0$ be such that $\Lambda\leq\beta_n\leq 1-\Lambda$. Then $\norm{Uy_n-y_n}\to 0$ almost surely with rate
\[
\Gamma(\lambda,\varepsilon):=\max\{\Delta(\lambda/9,\widehat{\varepsilon}/4),\rho(\widehat{\varepsilon}/4K'),\chi_1(\lambda\widehat{\varepsilon}/36),\chi_2(\lambda\widehat{\varepsilon}/36)\}
\]
for $\widehat{\varepsilon}:=\varepsilon\cdot\Lambda^2\cdot \eta(\varepsilon/K')$ and $K':=3K/\lambda$.
\end{theorem}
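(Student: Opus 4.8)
The plan is to exhibit, for given $\lambda,\varepsilon>0$, an event of probability strictly greater than $1-\lambda$ on which $\norm{Uy_n-y_n}<\varepsilon$ holds for all $n\ge\Gamma(\lambda,\varepsilon)$; this is exactly what it means for $\Gamma$ to be a rate of almost sure convergence. Write $\widehat\varepsilon:=\varepsilon\Lambda^2\eta(\varepsilon/K')$ and $K':=3K/\lambda$ (we may assume $\varepsilon\le 2K'$, the complementary case being immediate from the boundedness event below, on which no ``bad'' index can occur). First I would assemble four events. By Lemma~\ref{lem:stochastic:bounds} we have $\EE[Y]\le K$, so Markov's inequality gives $\PP(Y>K')\le\lambda/3$; put $G_{\mathrm{bd}}:=\{Y\le K'\}$, on which $\norm{x_n-p},\norm{y_n-p},\norm{u-p}\le K'$ for all $n$. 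By Remark~\ref{rem:sumExpAS}, applied with parameters $\lambda/9$ and $\widehat\varepsilon/4$, there are events $G_\xi$, $G_\delta$, each of probability $>1-\lambda/9$, on which $\norm{\xi_n}<\widehat\varepsilon/4$ for all $n\ge\chi_1(\lambda\widehat\varepsilon/36)$, respectively $\norm{\delta_n}<\widehat\varepsilon/4$ for all $n\ge\chi_2(\lambda\widehat\varepsilon/36)$. Since $\Delta$ is a rate of almost sure convergence for $\norm{x_{n+1}-x_n}\to0$, the event $G_x:=\{\norm{x_{n+1}-x_n}<\widehat\varepsilon/4\text{ for all }n\ge\Delta(\lambda/9,\widehat\varepsilon/4)\}$ has probability $>1-\lambda/9$. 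Finally $\alpha_n<\widehat\varepsilon/(4K')$ deterministically for $n\ge\rho(\widehat\varepsilon/(4K'))$. On $G:=G_{\mathrm{bd}}\cap G_\xi\cap G_\delta\cap G_x$, which satisfies $\PP(G)>1-2\lambda/3>1-\lambda$, all of these smallness facts hold simultaneously for all $n\ge\Gamma(\lambda,\varepsilon)=\max\{\Delta(\lambda/9,\widehat\varepsilon/4),\rho(\widehat\varepsilon/(4K')),\chi_1(\lambda\widehat\varepsilon/36),\chi_2(\lambda\widehat\varepsilon/36)\}$.

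Next comes the geometric heart of the argument, carried out pointwise on $G$. Fix $n\ge\Gamma(\lambda,\varepsilon)$ and suppose, towards a contradiction, that $\norm{Uy_n-y_n}\ge\varepsilon$. Since $Up=p$ and $U$ is nonexpansive, $\norm{Uy_n-y_n}\le\norm{Uy_n-p}+\norm{y_n-p}\le 2\norm{y_n-p}$, so with $r:=\norm{y_n-p}$ we have $\varepsilon/2\le r\le K'$ on $G$. Both $Uy_n$ and $y_n$ lie in $\overline{B}_r(p)$, and $\norm{Uy_n-y_n}\ge\varepsilon\ge(\varepsilon/K')\,r$; applying Lemma~\ref{lem:modUniConvFull} with centre $p$, radius $r$, modulus argument $\varepsilon/K'$, and convex-combination coefficient $\beta_n$, and using $\beta_n(1-\beta_n)\ge\Lambda^2$ (valid because $\Lambda\le\tfrac12$), we obtain
\[
\norm{(1-\beta_n)Uy_n+\beta_n y_n-p}\le\bigl(1-2\Lambda^2\eta(\varepsilon/K')\bigr)r=r-2\Lambda^2\eta(\varepsilon/K')\,r\le r-\widehat\varepsilon,
\]
where the last inequality uses $r\ge\varepsilon/2$. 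Peeling off the term $(1-\beta_n)\delta_n$ from $x_{n+1}$ then yields
\[
\norm{x_{n+1}-p}\le\norm{y_n-p}-\widehat\varepsilon+\norm{\delta_n}.
\]

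To finish I would feed in the cheap estimate $\norm{y_n-p}\le(1-\alpha_n)\norm{x_n-p}+\alpha_n\norm{u-p}+\norm{\xi_n}\le\norm{x_n-p}+\alpha_nK'+\norm{\xi_n}$ (using $Tp=p$ and nonexpansivity of $T$, together with $\norm{u-p}\le K'$ on $G$), and then the bounds $\alpha_nK'<\widehat\varepsilon/4$, $\norm{\xi_n}<\widehat\varepsilon/4$, $\norm{\delta_n}<\widehat\varepsilon/4$ valid at the index $n$ on $G$. This gives $\norm{x_{n+1}-p}<\norm{x_n-p}-\widehat\varepsilon+\widehat\varepsilon/4+\widehat\varepsilon/4+\widehat\varepsilon/4=\norm{x_n-p}-\widehat\varepsilon/4$. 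On the other hand $\norm{x_n-p}\le\norm{x_{n+1}-p}+\norm{x_{n+1}-x_n}<\norm{x_{n+1}-p}+\widehat\varepsilon/4$, whence $\norm{x_n-p}<\norm{x_n-p}$, a contradiction. Hence $\norm{Uy_n-y_n}<\varepsilon$ for every $n\ge\Gamma(\lambda,\varepsilon)$ on $G$, and therefore $\PP\bigl(\exists n\ge\Gamma(\lambda,\varepsilon)\,(\norm{Uy_n-y_n}\ge\varepsilon)\bigr)\le\PP(G^c)<\lambda$, as required.

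The main obstacle, and the point where this departs from the deterministic Halpern--Mann analyses of \cite{DP2023,LP2024}, is that here neither divergence nor summability of $\sum_n\alpha_n$ is assumed, so one cannot run a global Fej\'er-type telescoping of $\norm{x_n-p}$: the accumulated contributions $\sum_n\alpha_n\norm{u-p}$ would be uncontrollable. The resolution is the one-step comparison used above, in which the hypothesis $\norm{x_{n+1}-x_n}\to0$ (with rate $\Delta$) is used to pin $\norm{x_{n+1}-p}$ to $\norm{x_n-p}$, so that only the single-index estimate $\alpha_n<\widehat\varepsilon/(4K')$ is ever invoked. The remaining delicate points are purely bookkeeping: splitting the failure probability as $\lambda/3$ for the a priori $L^1$-bound (via Markov on $Y$) plus $\lambda/9$ for each of the two noise series and for the discrete-velocity rate, and checking that the constants produced by Lemma~\ref{lem:modUniConvFull} collapse to precisely $\widehat\varepsilon$ uniformly over the admissible range $r\in[\varepsilon/2,K']$.
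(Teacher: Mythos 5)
Your proof is correct and follows essentially the same route as the paper's: the same application of Lemma~\ref{lem:modUniConvFull} to obtain the drop $\norm{x_{n+1}-p}\le\norm{y_n-p}-\widehat\varepsilon+\norm{\delta_n}$, the same cheap estimate $\norm{y_n-p}\le\norm{x_n-p}+\alpha_nK'+\norm{\xi_n}$, the same Markov bound on $Y$ with $K'=3K/\lambda$, and the same fourfold splitting of the error budget into $\widehat\varepsilon/4$ pieces. The only (cosmetic) difference is that the paper argues by contradiction, intersecting the putative bad event with the boundedness event via Fréchet inequalities and then bounding the probability of the resulting set above by $\lambda/3$, whereas you build the good event $G$ directly and run the geometric step pointwise on it, which in fact gives the slightly sharper failure bound $\PP(G^c)<2\lambda/3$; both presentations are fine.
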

\begin{proof}
Suppose for contradiction that
\[
\PP\left(\exists n\geq \Gamma(\lambda,\varepsilon)\, (\norm{Uy_n-y_n}\geq\varepsilon)\right)\geq \lambda
\]
and call the set inside the probability $B_{\lambda,\varepsilon}$. By Lemma \ref{lem:stochastic:bounds} and Markov's inequality, we have
\[
\PP\left(\exists n\left(\norm{y_n-p}> \frac{K}{\lambda}\right)\right)\leq \PP\left(Y\geq \frac{K}{\lambda}\right)\leq \frac{\EE[Y]}{K/\lambda}\leq \lambda
\]
for any $\lambda>0$ and so $\PP\left(\exists n \left(\norm{y_n-p}> K'\right)\right)\leq \lambda/3$ for $K':=3K/\lambda$. Similarly for $\norm{u-p}$. Thus, using the Fr\'echet inequalities, we have
\begin{align*}
&\PP(\exists n\geq \Gamma(\lambda,\varepsilon)(\norm{Uy_n-y_n}\geq \varepsilon)\land \forall n\left(\norm{y_n-p},\norm{u-p}\leq K'\right))\\
&\qquad\geq \PP(\exists n\geq \Gamma(\lambda,\varepsilon)(\norm{Uy_n-y_n}>\varepsilon)) + \PP(\forall n(\norm{y_n-p}\leq K')) + \PP(\forall n(\norm{u-p}\leq K')) - 2\\
&\qquad \geq \lambda+(1-\lambda/3)+ (1-\lambda/3)-2=\lambda/3.
\end{align*}
We denote that set measured in the above by $A_{\lambda,\varepsilon}$, and let $\omega\in A_{\lambda,\varepsilon}$, i.e.\ there exists some $n(\omega)\geq\Gamma(\lambda,\varepsilon)$ such that
\[
\norm{Uy_{n(\omega)}(\omega)-y_{n(\omega)}(\omega)}\geq\varepsilon\text{ and } \norm{y_{n(\omega)}(\omega)-p},\norm{u(\omega)-p}\leq K'.
\]
Writing $n_0$ for $n(\omega)$, we then have $\norm{Uy_{n_0}(\omega)-y_{n_0}(\omega)}\leq 2\norm{y_{n_0}(\omega)-p}\leq 2K'$ so that $\varepsilon/2\leq \norm{y_{n_0}(\omega)-p}\leq K'$. Also, we have $\norm{Uy_{n_0}(\omega)-p}\leq\norm{y_{n_0}(\omega)-p}\leq K'$ as well as
\[
\norm{Uy_{n_0}(\omega)-y_{n_0}(\omega)}\geq \varepsilon=\varepsilon/K'\cdot K'\geq \varepsilon/K'\norm{y_{n_0}(\omega)-p}
\]
and $\varepsilon/K'\leq 2$. So, we can apply Lemma \ref{lem:modUniConvFull} to derive
\begin{align*}
\norm{x_{{n_0}+1}(\omega)-p}&=\norm{(1-\beta_{n_0})(Uy_{n_0}(\omega)+\delta_{n_0}(\omega))+\beta_{n_0}y_{n_0}(\omega)-p}\\
&\leq \norm{(1-\beta_{n_0})Uy_{n_0}(\omega)+\beta_{n_0}y_{n_0}(\omega)-p}+\norm{\delta_{n_0}(\omega)}\\
&\leq (1-2\beta_{n_0}(1-\beta_{n_0})\eta(\varepsilon/K'))\norm{y_{n_0}(\omega)-p}+\norm{\delta_{n_0}(\omega)}\\
&\leq \norm{y_{n_0}(\omega)-p}-2\norm{y_{n_0}(\omega)-p}\Lambda^2\eta(\varepsilon/K')+\norm{\delta_{n_0}(\omega)}\\
&\leq\norm{y_{n_0}(\omega)-p}-\varepsilon\cdot\Lambda^2\cdot\eta(\varepsilon/K')+\norm{\delta_{n_0}(\omega)}.
\end{align*}
Now, we further have
\begin{align*}
\norm{y_{n_0}(\omega)-p}&\leq (1-\alpha_{n_0})\norm{Tx_{n_0}(\omega)-p}+\alpha_{n_0}\norm{u(\omega)-p}+\norm{\xi_{n_0}(\omega)}\\
&\leq \norm{x_{n_0}(\omega)-p}+\alpha_{n_0}\norm{u(\omega)-p}+\norm{\xi_{n_0}(\omega)}
\end{align*}
so that we can in particular derive
\[
\norm{x_{{n_0}+1}(\omega)-p}\leq\norm{x_{n_0}(\omega)-p}+\alpha_{n_0} K'+\norm{\xi_{n_0}(\omega)}+\norm{\delta_{n_0}(\omega)}-\varepsilon\cdot\Lambda^2\cdot\eta(\varepsilon/K').
\]
So, in the end we have
\begin{align*}
\widehat{\varepsilon}&=\varepsilon\cdot \Lambda^2 \cdot \eta(\varepsilon/K')\\
&\leq\norm{x_{n_0}(\omega)-p}-\norm{x_{{n_0}+1}(\omega)-p}+\alpha_{n_0}K'+\norm{\xi_{n_0}(\omega)}+\norm{\delta_{n_0}(\omega)}\\
&\leq \norm{x_{{n_0}+1}(\omega)-x_{n_0}(\omega)}+\alpha_{n_0}K'+\norm{\xi_{n_0}(\omega)}+\norm{\delta_{n_0}(\omega)}.
\end{align*}
Letting $V_{n}:=\norm{x_{n+1}-x_n}+\alpha_nK'+\norm{\xi_n}+\norm{\delta_n}$, we have shown that
\[
A_{\lambda,\varepsilon}\subseteq \{\exists n\geq \Gamma(\lambda,\varepsilon)\, (V_n\geq\widehat{\varepsilon})\}.
\]
Similarly to in the proof of Theorem \ref{res:other:ar:as}, we now have that $\chi_1(\lambda\varepsilon),\chi_2(\lambda\varepsilon)$ are rates for $\norm{\xi_n},\norm{\delta_n}\to 0$, respectively. So we have
\begin{align*}
\lambda/3&\leq\PP(A_{\lambda,\varepsilon})\\
&\leq\PP(\exists n\geq \Gamma(\lambda,\varepsilon)\, (V_{n}\geq \widehat{\varepsilon}))\\
&\leq \PP\left(\exists n\geq \Gamma(\lambda,\varepsilon)\, \left((\norm{x_{n+1}-x_n}\geq \widehat{\varepsilon}/4)\cup (\alpha_nK'\geq \widehat{\varepsilon}/4)\cup (\norm{\xi_n}\geq \widehat{\varepsilon}/4)\cup (\norm{\delta_n}\geq \widehat{\varepsilon}/4) \right)\right)\\
&\leq \PP\left(\exists n\geq \Delta(\lambda/9,\widehat{\varepsilon}/4)(\norm{x_{n+1}-x_n}\geq \widehat{\varepsilon}/4)\right)+\PP\left(\exists n\geq \rho(\widehat{\varepsilon}/4K')(\alpha_nK'\geq \widehat{\varepsilon}/4)\right)\\
&\quad+\PP\left(\exists n\geq \chi_1(\lambda\widehat{\varepsilon}/36)(\norm{\xi_n}\geq \widehat{\varepsilon}/4)\right)+\PP\left(\exists n\geq \chi_2(\lambda\widehat{\varepsilon}/36)(\norm{\delta_n}\geq \widehat{\varepsilon}/4)\right)\\
&<\frac{\lambda}{9}+0+\frac{\lambda}{9}+\frac{\lambda}{9}=\frac{\lambda}{3},
\end{align*}
a contradiction.
\end{proof}

\begin{remark}\label{rem:arUYOpt}
Using a slightly different argument first devised in \cite[Theorem 3.4]{Koh2003} (see also \cite[Remark 15]{Leu2007} or \cite[Remark 3.7]{LP2024} for similar remarks in the context of nonlinear spaces), we can slightly optimize the above rate in the context of moduli of uniform convexity of a special form: Let $\eta(\varepsilon)=\varepsilon\cdot\tilde\eta(\varepsilon)$ where $\tilde\eta$ is increasing. Then the above rate $\Gamma$ holds even with $\widehat{\varepsilon}:=\varepsilon\cdot\Lambda^2\cdot \tilde\eta(\varepsilon/K')$.

To see this, follow the proof of Theorem \ref{geometry:as} but replace $\varepsilon/K'$ with $\varepsilon/\norm{y_{n_0}(\omega)-p}$. Then also $\varepsilon/\norm{y_{n_0}(\omega)-p}\leq 2$ as well as 
\[
\norm{Uy_{n_0}(\omega)-y_{n_0}(\omega)}\geq \varepsilon/\norm{y_{n_0}(\omega)-p}\cdot \norm{y_{n_0}(\omega)-p}
\]
and this leads to
\begin{align*}
\norm{x_{{n_0}+1}(\omega)-p}&\leq \norm{y_{n_0}(\omega)-p}-2\cdot\varepsilon\cdot\Lambda^2\cdot\tilde\eta(\varepsilon/\norm{y_{n_0}(\omega)-p})+\norm{\delta_{n_0}(\omega)}\\
&\leq\norm{y_{n_0}(\omega)-p}-\varepsilon\cdot\Lambda^2\cdot\tilde\eta(\varepsilon/\norm{y_{n_0}(\omega)-p})+\norm{\delta_{n_0}(\omega)}\\
&\leq\norm{y_{n_0}(\omega)-p}-\varepsilon\cdot\Lambda^2\cdot\tilde\eta(\varepsilon/K')+\norm{\delta_{n_0}(\omega)}.
\end{align*}
using that $\tilde\eta(\varepsilon/K')\leq\tilde\eta(\varepsilon/\norm{y_{n_0}(\omega)-p})$ as $\tilde\eta(\varepsilon/\norm{y_{n_0}(\omega)-p})\leq K'$ and since $\tilde\eta$ is increasing. Then the proof continuous as before.
\end{remark}

We now discuss the assumptions from the quantitative theory of expected values that we require to establish an analogous result on the asymptotic regularity relative to $U$ of $(y_n)$ in mean.

\begin{definition}
\label{def:modulus:cont}
Let $X$ be an integrable random variable. We call a function $\mu:(0,\infty)\to (0,\infty)$ such that
\[
\forall \varepsilon>0\,\forall A\in \mathcal{F}\left( \PP(A)\leq\mu(\varepsilon)\to \EE[\vert X\vert 1_A]\leq\varepsilon\right)
\]
a modulus of absolute continuity for $X$.
\end{definition}

By e.g.\ Lemma 13.1 in \cite{Wil1991}, such a modulus always exists for integrable $X$. However, it provides a quantitatively different rendering of that property than a simple upper bound on the mean and can in general not be derived from such a bound. We begin with a simple property of such moduli.

\begin{lemma}\label{lem:absContModBoundLem}
Let $X$ be an integrable random variable and $\mu$ a modulus of absolute continuity for $X$. For any $a,\varepsilon\in (0,\infty)$, we have that
\[
\EE[\vert X\vert]\geq a+\varepsilon\text{ implies }\PP(\vert X\vert >a)>\mu(\varepsilon/2).
\]
In particular, we have that $\EE[\vert X\vert]\geq \varepsilon$ implies $\PP(\vert X\vert>\varepsilon/2)>\mu(\varepsilon/4)$.
\end{lemma}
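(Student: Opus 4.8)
The plan is to argue by contraposition, splitting the expectation $\EE[\abs{X}]$ over the event where $\abs{X}$ is large and its complement. Fix $a,\varepsilon\in(0,\infty)$ and suppose, towards a contradiction, that $\PP(\abs{X}>a)\leq\mu(\varepsilon/2)$. Set $A:=\{\abs{X}>a\}\in\mathcal{F}$. Since $\PP(A)\leq\mu(\varepsilon/2)$, the defining property of the modulus of absolute continuity (applied with the parameter $\varepsilon/2$ in place of $\varepsilon$) yields $\EE[\abs{X}1_A]\leq\varepsilon/2$.

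Next I would bound the contribution from the complement. On $A^c=\{\abs{X}\leq a\}$ we have $\abs{X}1_{A^c}\leq a\cdot 1_{A^c}$ pointwise, hence $\EE[\abs{X}1_{A^c}]\leq a\,\PP(A^c)\leq a$. Combining the two bounds,
\[
\EE[\abs{X}]=\EE[\abs{X}1_A]+\EE[\abs{X}1_{A^c}]\leq \frac{\varepsilon}{2}+a<a+\varepsilon,
\]
which contradicts the assumption $\EE[\abs{X}]\geq a+\varepsilon$. Therefore $\PP(\abs{X}>a)>\mu(\varepsilon/2)$, proving the first claim.

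For the ``in particular'' clause I would simply instantiate the first part with $a:=\varepsilon/2$: if $\EE[\abs{X}]\geq\varepsilon$, then $\EE[\abs{X}]\geq \varepsilon/2+\varepsilon/2=a+\varepsilon/2$, so applying the first claim with this $a$ and with $\varepsilon/2$ in the role of ``$\varepsilon$'' gives $\PP(\abs{X}>\varepsilon/2)>\mu((\varepsilon/2)/2)=\mu(\varepsilon/4)$, as desired.

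The argument is short and essentially computational; there is no real obstacle, only the bookkeeping of which inequalities are strict. The one point to be careful about is that the modulus of absolute continuity is used with the halved parameter $\varepsilon/2$ (so that the leftover mass $a$ from the complement, together with $\varepsilon/2$, still falls strictly below $a+\varepsilon$), and correspondingly the threshold in the conclusion is $\mu(\varepsilon/2)$ rather than $\mu(\varepsilon)$.
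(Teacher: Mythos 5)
Your argument is correct and matches the paper's proof essentially verbatim: both split $\EE[\abs{X}]$ over $\{\abs{X}>a\}$ and its complement, apply the modulus of absolute continuity with parameter $\varepsilon/2$ to the first piece and the trivial pointwise bound $\abs{X}\le a$ to the second, and conclude $\EE[\abs{X}]\le a+\varepsilon/2<a+\varepsilon$. Your instantiation $a:=\varepsilon/2$ for the ``in particular'' clause is also the intended one.
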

\begin{proof}
Suppose $\PP(\vert X\vert>a)\leq\mu(\varepsilon/2)$. Then we have
\[
\EE[\vert X\vert]=\EE[\vert X\vert 1_{\vert X\vert\leq a}]+\EE[\vert X\vert 1_{\vert X\vert> a}]\leq a+\varepsilon/2 < a+\varepsilon
\]
which is the claim.
\end{proof}

A modulus of absolute continuity now motivates the following new notion of a modulus of uniform integrability:

\begin{definition}
\label{uniformint:def}
A sequence of random variables $(X_n)$ is called uniformly integrable if both $\sup_{n\in\mathbb{N}}\EE[\vert X_n\vert]<\infty$ and for any $\varepsilon>0$, there exists a $\delta>0$ such that 
\[
\forall n\in\mathbb{N}\,\forall A\in\mathcal{F}\left( \PP(A)\leq\delta\to \EE[\vert X_n\vert 1_A]\leq\varepsilon\right).
\]
We call a function $\mu$ that witnesses such a $\delta$ in terms of $\varepsilon$ a modulus of uniform integrability for $(X_n)$.
\end{definition}

Note $\mu$ is a modulus of uniform integrability for $(X_n)$ exactly when $\mu$ is a modulus of absolute continuity for any $X_n$. The main use that a modulus of uniform integrability has for a stochastic process is that with it, we can transfer a rate of almost-sure convergence to a rate of convergence in mean:

\begin{lemma}\label{as:ui:mean}
Let $(X_n)$ be a sequence of nonnegative random variables such that $X_n\to 0$ almost surely with rate $\varphi$ and such that $\mu$ is modulus of uniform integrability for $(X_n)$. Then $\EE[X_n]\to 0$ with rate
\[
\Gamma(\varepsilon):=\varphi\left(\mu\left(\frac{\varepsilon}{4}\right),\frac{\varepsilon}{2}\right).
\]
\end{lemma}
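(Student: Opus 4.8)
The plan is to fix $\varepsilon>0$ and an arbitrary $n\geq\Gamma(\varepsilon)=\varphi\left(\mu(\varepsilon/4),\varepsilon/2\right)$, and to estimate $\EE[X_n]$ by splitting the expectation over the event $A_n:=(X_n\geq\varepsilon/2)$ and its complement: the ``large'' part is handled by absolute continuity, the ``small'' part by the pointwise bound $X_n<\varepsilon/2$ on $A_n^c$.

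First I would control $\PP(A_n)$. Since $\varphi$ is a rate of almost sure convergence for $X_n\to 0$, instantiating its defining property at the pair $(\mu(\varepsilon/4),\varepsilon/2)$ gives
\[
\PP\left(\exists m\geq\varphi\left(\mu(\varepsilon/4),\varepsilon/2\right)\,(X_m\geq\varepsilon/2)\right)<\mu(\varepsilon/4).
\]
Because $n\geq\Gamma(\varepsilon)$, the single-index event $A_n$ is contained in the tail event $\left(\exists m\geq\Gamma(\varepsilon)\,(X_m\geq\varepsilon/2)\right)$, and hence $\PP(A_n)<\mu(\varepsilon/4)$; in particular $\PP(A_n)\leq\mu(\varepsilon/4)$.

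Next I would use the hypothesis that $\mu$ is a modulus of uniform integrability for $(X_n)$, which, as noted just before the lemma, means that $\mu$ is a modulus of absolute continuity for each $X_n$. Applying this with $A:=A_n$ and threshold $\varepsilon/4$ — which is legitimate since $\PP(A_n)\leq\mu(\varepsilon/4)$ — yields $\EE[X_n 1_{A_n}]\leq\varepsilon/4$ (using $X_n\geq 0$, so $|X_n|=X_n$). On $A_n^c$ we have $X_n<\varepsilon/2$ pointwise, so $\EE[X_n 1_{A_n^c}]\leq(\varepsilon/2)\PP(A_n^c)\leq\varepsilon/2$. Adding the two contributions gives
\[
\EE[X_n]=\EE[X_n 1_{A_n}]+\EE[X_n 1_{A_n^c}]\leq\frac{\varepsilon}{4}+\frac{\varepsilon}{2}<\varepsilon,
\]
which is exactly the assertion that $\Gamma$ is a rate of convergence for $\EE[X_n]\to 0$.

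There is no serious obstacle here; the only point requiring a little care is the bookkeeping that lets us pass from the rate of almost sure convergence — which is naturally phrased in terms of the tail event $\left(\exists m\geq N\right)(\cdots)$ — to a bound on the single-index event $(X_n\geq\varepsilon/2)$, and this works precisely because $n$ is taken above the threshold $\Gamma(\varepsilon)$. One could alternatively route the probability estimate through Lemma \ref{lem:absContModBoundLem}, but the direct split above is cleaner.
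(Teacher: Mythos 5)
Your proof is correct. It arrives at the same estimate as the paper's, but is organized differently: you argue directly, fixing $n\geq\Gamma(\varepsilon)$ and splitting $\EE[X_n]$ over $A_n=(X_n\geq\varepsilon/2)$ and $A_n^c$, bounding the first piece by $\varepsilon/4$ via absolute continuity (since the rate of almost sure convergence gives $\PP(A_n)\leq\mu(\varepsilon/4)$) and the second by $\varepsilon/2$ pointwise. The paper instead argues by contradiction and routes through Lemma~\ref{lem:absContModBoundLem}: supposing $\EE[X_{n_0}]\geq\varepsilon$ for some $n_0\geq\Gamma(\varepsilon)$, that lemma gives $\PP(X_{n_0}>\varepsilon/2)>\mu(\varepsilon/4)$, contradicting the rate. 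Lemma~\ref{lem:absContModBoundLem} is proved by exactly the same split you perform, so the mathematical content is identical; the difference is that the paper factors the split into a standalone lemma and phrases the application contrapositively, while you inline the computation and obtain the slightly sharper bound $\EE[X_n]\leq 3\varepsilon/4<\varepsilon$ directly. Your version is a little more self-contained; the paper's has the advantage that Lemma~\ref{lem:absContModBoundLem} is reusable on its own.
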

\begin{proof}
Suppose for contradiction that there exists some $n_0\geq \Gamma(\varepsilon)$ with $\EE[X_{n_0}]\geq \varepsilon$. Then by Lemma \ref{lem:absContModBoundLem} we have $\PP(X_{n_0}>\varepsilon/2)>\mu(\varepsilon/4)$ and hence $\PP\left(\exists n\geq \Gamma(\varepsilon)(X_n\geq \varepsilon/2)\right)\geq \PP(X_{n_0}>\varepsilon/2)>\mu(\varepsilon/4)$, a contradiction.
\end{proof}

We now obtain the following result on rates of asymptotic regularity in mean, derived from the previous Theorem \ref{geometry:as} using a modulus of uniform integrability for $(\norm{Uy_n-y_n})$.

\begin{theorem}\label{geometry:alt}
Let $(X,\norm{\cdot})$ be uniformly convex with modulus $\eta$. Under the assumptions of Theorem \ref{res:x:ar}, let $\EE[\norm{x_{n+1}-x_n}]\to 0$ with rate $\Delta$ from Theorem \ref{res:x:ar}. Also, let $\rho$ be a rate for $\alpha_n\to 0$ and assume that $\sum_{n=0}^\infty \EE[\norm{\xi_n}]$, $\sum_{n=0}^\infty \EE[\norm{\delta_n}]<\infty$
with rates of convergence $\chi_1$, $\chi_2$, respectively. Also, let $\Lambda>0$ be such that $\Lambda\leq\beta_n\leq 1-\Lambda$. Lastly, let $\mu$ be a modulus of uniform integrability for $(\norm{Uy_n-y_n})$. Then $\EE[\norm{Uy_n-y_n}]\to 0$ with rate
\[
\Gamma(\varepsilon):=\max\{\Delta(\overline{\varepsilon}),\rho(\widehat{\varepsilon}/4K'),\chi_1(\overline\varepsilon),\chi_2(\overline\varepsilon)\}
\]
where $\overline\varepsilon:=\widehat{\varepsilon}\mu(\varepsilon/4)/36$ for $\widehat{\varepsilon}:=\varepsilon/2\cdot\Lambda^2\cdot \eta(\varepsilon/2K')$ and $K':=3K/\mu(\varepsilon/4)$.
\end{theorem}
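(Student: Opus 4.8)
The plan is to replay the pointwise geometric argument from the proof of Theorem \ref{geometry:as} almost verbatim, with two modifications adapted to the ``in mean'' setting. First, instead of starting from an assumed probabilistic lower bound on the bad event, I would start from an assumed lower bound on the \emph{expectation} $\EE[\norm{Uy_{n_0}-y_{n_0}}]$ and convert it, via Lemma \ref{lem:absContModBoundLem} applied with the modulus of uniform integrability $\mu$ (which is in particular a modulus of absolute continuity for each $\norm{Uy_n-y_n}$, an integrable random variable by Lemma \ref{lem:stochastic:bounds}), into the probabilistic statement $\PP(\norm{Uy_{n_0}-y_{n_0}}>\varepsilon/2)>\mu(\varepsilon/4)$. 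Second, where the proof of Theorem \ref{geometry:as} controls the discrete velocity and the noise terms by combining an almost-sure rate with Markov's inequality, I would instead apply Markov's inequality directly to the expectations $\EE[\norm{x_{n+1}-x_n}]$, $\EE[\norm{\xi_n}]$, $\EE[\norm{\delta_n}]$, which is precisely what the hypotheses (the rate $\Delta$ from Theorem \ref{res:x:ar} and the series rates $\chi_1,\chi_2$) provide.

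Concretely, suppose for contradiction that $\EE[\norm{Uy_{n_0}-y_{n_0}}]\geq\varepsilon$ for some $n_0\geq\Gamma(\varepsilon)$, and set $\lambda:=\mu(\varepsilon/4)$, so that $K'=3K/\lambda$ and $\widehat\varepsilon=(\varepsilon/2)\cdot\Lambda^2\cdot\eta((\varepsilon/2)/K')$. Lemma \ref{lem:absContModBoundLem} yields $\PP(\norm{Uy_{n_0}-y_{n_0}}>\varepsilon/2)>\lambda$, and, exactly as in the proof of Theorem \ref{geometry:as}, Lemma \ref{lem:stochastic:bounds} together with Markov's inequality gives $\PP(\exists n\,(\norm{y_n-p}>K'))\leq\lambda/3$ and $\PP(\norm{u-p}>K')\leq\lambda/3$; by the Fr\'echet inequalities the event
\[
A:=\{\norm{Uy_{n_0}-y_{n_0}}>\varepsilon/2\}\cap\{\forall n\,(\norm{y_n-p}\leq K')\}\cap\{\norm{u-p}\leq K'\}
\]
then has $\PP(A)>\lambda/3$. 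On $A$ I would run the chain of inequalities of Theorem \ref{geometry:as}, but at resolution $\varepsilon/2$ rather than $\varepsilon$: from $\norm{Uy_{n_0}-y_{n_0}}\leq 2\norm{y_{n_0}-p}$ one gets $\varepsilon/4\leq\norm{y_{n_0}-p}\leq K'$, and since $\norm{Uy_{n_0}-p}\leq\norm{y_{n_0}-p}$ and $\norm{Uy_{n_0}-y_{n_0}}\geq((\varepsilon/2)/K')\cdot\norm{y_{n_0}-p}$ with $(\varepsilon/2)/K'\leq 2$, Lemma \ref{lem:modUniConvFull} (using $\beta_n(1-\beta_n)\geq\Lambda^2$) gives, pointwise on $A$,
\[
\norm{x_{n_0+1}-p}\leq\norm{y_{n_0}-p}-\widehat\varepsilon+\norm{\delta_{n_0}}\leq\norm{x_{n_0}-p}+\alpha_{n_0}K'+\norm{\xi_{n_0}}+\norm{\delta_{n_0}}-\widehat\varepsilon,
\]
so that, setting $V_n:=\norm{x_{n+1}-x_n}+\alpha_nK'+\norm{\xi_n}+\norm{\delta_n}$, we obtain $A\subseteq\{V_{n_0}\geq\widehat\varepsilon\}$ and hence $\PP(V_{n_0}\geq\widehat\varepsilon)>\lambda/3$.

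On the other hand, $\{V_{n_0}\geq\widehat\varepsilon\}$ is contained in the union of the four events on which a single summand of $V_{n_0}$ is $\geq\widehat\varepsilon/4$. The event $\{\alpha_{n_0}K'\geq\widehat\varepsilon/4\}$ is empty because $n_0\geq\rho(\widehat\varepsilon/4K')$; for the other three, since $n_0\geq\Delta(\overline\varepsilon)$, $n_0\geq\chi_1(\overline\varepsilon)$ and $n_0\geq\chi_2(\overline\varepsilon)$ with $\overline\varepsilon=\widehat\varepsilon\mu(\varepsilon/4)/36$, we have $\EE[\norm{x_{n_0+1}-x_{n_0}}]$, $\EE[\norm{\xi_{n_0}}]$, $\EE[\norm{\delta_{n_0}}]<\overline\varepsilon$ (the latter two using that a rate of convergence for a series bounds its tail, hence each of its terms), so Markov's inequality bounds each of the corresponding three probabilities by $\overline\varepsilon/(\widehat\varepsilon/4)=\lambda/9$. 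Summing, $\PP(V_{n_0}\geq\widehat\varepsilon)<\lambda/3$, contradicting the previous paragraph.

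I do not anticipate a genuine obstacle here: the argument is a faithful adaptation of Theorem \ref{geometry:as}, and the only point needing care is the bookkeeping of the scaling constants. Passing through Lemma \ref{lem:absContModBoundLem} costs a factor $2$ in the resolution, which is why $\varepsilon$ is replaced by $\varepsilon/2$ inside $\widehat\varepsilon$ and $K':=3K/\mu(\varepsilon/4)$; and the combination of the fourfold splitting of $V_n$, the budget $\lambda/9$ per term left over from the Fr\'echet step, and the direct Markov estimates on expectations is exactly what forces the argument $\overline\varepsilon=\widehat\varepsilon\mu(\varepsilon/4)/36$ in the $\Delta,\chi_1,\chi_2$ terms, matching the stated $\Gamma$. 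No uniform-convexity input beyond that already used in Theorem \ref{geometry:as} is required, and the modulus $\mu$ enters only through the initial reduction and the definitions of $K'$ and $\overline\varepsilon$. (As in Remark \ref{rem:arUYOpt}, one could further optimise $\widehat\varepsilon$ when $\eta(\varepsilon)=\varepsilon\tilde\eta(\varepsilon)$ with $\tilde\eta$ increasing, but this is not needed for the statement.)
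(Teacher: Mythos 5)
Your proof is correct, and it follows a genuinely different (and more direct) route than the paper's. The paper treats Theorem~\ref{geometry:as} and Lemma~\ref{as:ui:mean} as black boxes: it first observes (via Theorem~\ref{res:x:ar}) that $\Delta(\lambda\varepsilon/2)$ is an almost-sure rate for $\norm{x_{n+1}-x_n}\to 0$, substitutes this into the rate of Theorem~\ref{geometry:as} to obtain an almost-sure rate $\tilde\Gamma(\lambda,\varepsilon)$ for $\norm{Uy_n-y_n}\to 0$, and then applies Lemma~\ref{as:ui:mean} with the modulus $\mu$ to convert to the rate in mean by evaluating at $(\lambda,\varepsilon)=(\mu(\varepsilon/4),\varepsilon/2)$. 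You instead unfold the geometric argument of Theorem~\ref{geometry:as} from scratch at a \emph{single} time $n_0$: you use $\mu$ as a modulus of absolute continuity (via Lemma~\ref{lem:absContModBoundLem}) to turn the assumed expectation lower bound into $\PP(\norm{Uy_{n_0}-y_{n_0}}>\varepsilon/2)>\mu(\varepsilon/4)$, run the uniform-convexity chain at resolution $\varepsilon/2$, and then bound the probability of the single-time event $\{V_{n_0}\geq\widehat\varepsilon\}$ by applying Markov's inequality directly to $\EE[\norm{x_{n_0+1}-x_{n_0}}]$, $\EE[\norm{\xi_{n_0}}]$, $\EE[\norm{\delta_{n_0}}]$, rather than going through almost-sure rates and tail events. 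Both approaches rely on exactly the same two ingredients (the uniform-convexity geometry and the modulus of uniform integrability), so nothing conceptually new enters; but your version is tighter in the bookkeeping. In fact, if one composes the paper's three black boxes literally, the $\Delta$ term comes out as $\Delta(\lambda\tilde\varepsilon/72)$ rather than the stated $\Delta(\lambda\tilde\varepsilon/36)$ (since $\Delta^{\mathrm{a.s.}}(\lambda/9,\widehat\varepsilon/4)=\Delta((\lambda/9)(\widehat\varepsilon/4)/2)$), a harmless factor-of-two slack; your direct single-time Markov argument avoids this and yields the stated $\overline\varepsilon=\widehat\varepsilon\mu(\varepsilon/4)/36$ exactly. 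All the intermediate steps you cite (Lemma~\ref{lem:stochastic:bounds}, the Fr\'echet inequality giving $\PP(A)>\lambda/3$, the application of Lemma~\ref{lem:modUniConvFull} with $r=\norm{y_{n_0}-p}$, and the fact that a rate of convergence for a series bounds each individual term past the threshold) check out.
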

\begin{proof}
By Theorem \ref{res:x:ar}, $\Delta(\lambda\varepsilon/2)$ is a rate of almost sure convergence for $\norm{x_{n+1}-x_n}\to 0$. Therefore by Theorem \ref{geometry:as}, a rate of almost sure convergence for $\norm{Uy_n-y_n}\to 0$ is given by
\begin{align*}
\tilde\Gamma(\lambda,\varepsilon)&:=\max\{\Delta(\lambda\tilde{\varepsilon}/36),\rho(\tilde{\varepsilon}/4K'),\chi_1(\lambda\tilde{\varepsilon}/36),\chi_2(\lambda\tilde{\varepsilon}/36)\}
\end{align*}
for $\tilde{\varepsilon}:=\varepsilon\cdot\Lambda^2\cdot \eta(\varepsilon/K')$ and $K':=3K/\lambda$. By Lemma \ref{as:ui:mean} we have $\EE[\norm{Uy_n-y_n}]\to 0$ with rate
\[
\Gamma(\varepsilon):=\tilde\Gamma\left(\mu\left(\frac{\varepsilon}{4}\right),\frac{\varepsilon}{2}\right)=\max\{\Delta(\overline{\varepsilon}),\rho(\widehat{\varepsilon}/4K'),\chi_1(\overline\varepsilon),\chi_2(\overline\varepsilon)\}
\]
where $\overline\varepsilon:=\widehat{\varepsilon}\mu(\varepsilon/4)/36$ now for $\widehat{\varepsilon}:=\varepsilon/2\cdot\Lambda^2\cdot \eta(\varepsilon/2K')$ and $K':=3K/\mu(\varepsilon/4)$.
\end{proof}

\begin{remark}\label{rem:arUYOptExp}
Using Remark \ref{rem:arUYOpt}, it follows that also here, if $\eta(\varepsilon)=\varepsilon\cdot\tilde\eta(\varepsilon)$ where $\tilde\eta$ is increasing, then above rate $\Gamma$ holds even with $\widehat{\varepsilon}$ defined as $\widehat{\varepsilon}:=\varepsilon/2\cdot\Lambda^2\cdot \tilde\eta(\varepsilon/2K')$.
\end{remark}

The following result collects the qualitative core of the above asymptotic regularity results for \eqref{sHM}, void of any quantitative considerations, to illustrate the main assumptions more clearly:

\begin{corollary}
\label{cor:qualitative}
Let $(X,\norm{\cdot})$ be uniformly convex and let $(x_n),(y_n)$ be the sequences generated by \eqref{sHM} where $\mathrm{Fix}T\cap\mathrm{Fix}U\neq\emptyset$. Assume $\sum_{n=0}^\infty \EE[\norm{\xi_n}]$, $\sum_{n=0}^\infty \EE[\norm{\delta_n}]<\infty$ and $\EE[\norm{x_0-p}],\EE[\norm{u-p}]<\infty$ together with $\sum_{n=0}^\infty \alpha_n=\infty$ and $\alpha_n\to 0$ as well as
\[
\sum_{n=0}^\infty \EE[\norm{\xi_{n+1}-\xi_n}], \sum_{n=0}^\infty\EE[\norm{\delta_{n+1}-\delta_n}],\sum_{n=0}^\infty|\alpha_{n+1}-\alpha_n|,\sum_{n=0}^\infty|\beta_{n+1}-\beta_n|<\infty
\]
and $\Lambda\leq \beta_n\leq 1-\Lambda$ for some $\Lambda>0$. Then $\norm{Uy_n-y_n}\to 0$ almost surely and in mean.
\end{corollary}

To be more precise, Corollary \ref{cor:qualitative} follows from Theorems \ref{geometry:as} and \ref{geometry:alt} in conjunction with Theorem \ref{res:x:ar}. As emphasised already, the requirement in Theorem \ref{geometry:alt} that $(\norm{Uy_n-y_n})$ comes equipped with a modulus of uniform integrability disappears in the qualitative result: Under the assumptions of this section, \eqref{asHyp} is automatically satisfied, and in particular $(\norm{Uy_n-y_n})$ is dominated by some integrable random variable $Y$ (explicitly definable by Lemma \ref{lem:stochastic:bounds}) and is thus automatically uniformly integrable. The modulus is only required in order to construct our rate of convergence.

Indeed, both Theorem \ref{geometry:as} and \ref{geometry:alt} provide rather complex constructions for the corresponding rates of asymptotic regularity, featuring an interplay of many different moduli, in particular of the moduli of uniform convexity $\eta$ and uniform integrability $\mu$. While concrete instantiations for the modulus $\eta$ for special (classes of) spaces will be discussed later on (see Lemma \ref{lem:ippPowerType}) in the context of standard assumptions on the errors and the parameters which result in particularly fast rates, here we illustrate how a corresponding modulus $\mu$ can be derived in more concrete situations.

We note that under our general assumption \eqref{asHyp} whereby $(\norm{Uy_n-y_n})$ is dominated by some $Y$ with finite mean, a modulus of absolute continuity for $Y$ (in the sense of Definition \ref{def:modulus:cont}) is clearly also a modulus of uniform integrability for the sequence $(\norm{Uy_n-y_n})$, and our quantitative uniform integrability requirement is thus reduced to a quantitative continuity property of the bound $Y$. Here the bounds $D,E$ for the summability assumption on the error terms do not suffice to derive such a modulus explicitly, as they do not allow for a construction of corresponding moduli of absolute continuity for the series, but provided these assumptions are extended to include such moduli of uniform integrability for $\sum_{n=0}^\infty\EE[\norm{\delta_n}]$ and $\sum_{n=0}^\infty\EE[\norm{\xi_n}]$, then we can effectively construct our modulus $\mu$ as follows:

\begin{lemma}
\label{lem:ui:to:errors}
Suppose that $u$ and $x_0$ are chosen to be constant, that $K>0$ is such that $\norm{x_0-p},\norm{u-p}<K$, and in addition both $\sum_{i=0}^\infty \norm{\xi_i}$ and $\sum_{i=0}^\infty \norm{\delta_i}$ are integrable with moduli of absolute continuity $\mu_1$ and $\mu_2$ respectively. Then a modulus of uniform integrability for $(\norm{Uy_n-y_n})$ is given by
\[
\mu(\varepsilon):=\min\left\{\frac{\varepsilon}{8K},\mu_1\left(\frac{\varepsilon}{8}\right),\mu_2\left(\frac{\varepsilon}{8}\right)\right\}.
\] 
\end{lemma}

\begin{proof}
By Lemma \ref{lem:stochastic:bounds} we have $\norm{Uy_n-y_n}\leq Y$ pointwise everywhere for all $n\in\NN$, where $Y$ is defined as in Lemma \ref{lem:stochastic:bounds}, and so in particular we then have
\[
\norm{Uy_n-y_n}\leq 2\left(2K+\sum_{i=0}^\infty \left(\norm{\xi_n}+\norm{\delta_n}\right)\right).
\]
for any $n\in\NN$. Thus for any $A\in\mathcal{F}$ and $n\in\NN$ it follow that
\[
\EE[\norm{Uy_n-y_n}1_A]\leq 4K\PP(A)+2\left(\sum_{i=0}^\infty \left(\EE[\norm{\xi_i}1_A]+\EE[\norm{\delta_i}1_A]\right)\right)
\]
and so the result follows by definition of $\mu_1$ and $\mu_2$.
\end{proof}

While such moduli $\mu_1,\mu_2$ always exist, the question of how these and related moduli of uniform integrability can actually be constructed in concrete situations still remains. First, we observe that in scenarios where $(\norm{Uy_n-y_n})$, or its $L_1$-bound $Y$ or the series over the errors enjoy stronger properties, such as higher moment conditions, corresponding moduli can be given in a straightforward manner and moreover assume a very simple form.

For that, we first consider the following general result:

\begin{lemma}\label{uniformint:lem}
Let $(X_n)$ be a sequence of random variables such that $\sup_{n\in\NN}\EE[g(|X_n|)]<\infty$ for some measurable supercoercive $g:[0,\infty)\to [0,\infty)$, i.e.\ $g(x)/x\to \infty$ as $x\to\infty$. Then $(X_n)$ is uniformly integrable and $\mu$ defined by
\[
\mu(\varepsilon):=\frac{\varepsilon}{2} \left(\kappa\left(\frac{2K}{\varepsilon}\right)\right)^{-1} 
\]
is a corresponding modulus in the sense of Definition \ref{uniformint:def}, where $\sup_{n\in\NN}\EE[g(|X_n|)]<K$ and $\kappa:(0,\infty)\to (0,\infty)$ is a rate of divergence for $g(x)/x\to \infty$, that is
\[
\forall a>0\,\forall x\geq \kappa(a)\left( \frac{g(x)}{x}\geq a\right).
\]
\end{lemma}
\begin{proof}
Uniform integrability of $(X_n)$ is a standard fact \cite{Pou1915} (see also \cite[Theorem 6.19]{Kle2020}), and we simply need to verify the moduli. Fix $\varepsilon>0$, $n\in\NN$ and $A\in \mathcal{F}$. Note that for any $a>0$, we have
\begin{align*}
\EE[\vert X_n \vert 1_A]&\leq \EE[\vert X_n \vert 1_{A\cap (\vert X_n \vert\leq \kappa(a))}]+\EE[|X_n|1_{A\cap (\vert X_n\vert>\kappa(a))}]\\
&\leq \kappa(a)\PP(A)+\EE[\vert X_n\vert1_{A\cap (\vert X_n\vert>\kappa(a))}].
\end{align*}
Using now that $x>\kappa(a)$ implies $g(x)/a\geq x$, we have
\[
\EE[\vert X_n \vert 1_A]\leq \kappa(a)\PP(A)+a^{-1}\EE[g(\vert X_n\vert)]<\kappa(a)\PP(A)+a^{-1}K.
\]
Setting $a:=2K/\varepsilon$ yields $\EE[\vert X_n \vert 1_A]\leq \kappa(2K/\varepsilon)\PP(A)+\varepsilon/2$, so that if $\PP(A)\leq\mu(\varepsilon)$ one has $\EE[\vert X_n \vert 1_A]\leq \varepsilon$.
\end{proof}

The above lemma is a quantitative variant of the fundamental de la Vall\'ee-Poussin theorem \cite{Pou1915} (see again also \cite[Theorem 6.19]{Kle2020}), and the existence of such a supercoercive function in fact characterizes uniformly integrable sequences of random variables. In particular, an immediate consequence is the following result for higher moment conditions:

\begin{lemma}
\label{uniformint:lem:moment}
Let $(X_n)$ be a sequence of random variables such that $\sup_{n\in\NN}\EE[|X_n|^p]<K$ for some $K>0$ and $p>1$. Then $(X_n)$ is uniformly integrable and $\mu$ defined by
\[
\mu(\varepsilon):=\frac{\varepsilon}{2}\left(\frac{\varepsilon}{2K}\right)^{1/(p-1)}
\]
is a corresponding modulus in the sense of Definition \ref{uniformint:def}.
\end{lemma}

Lemmas \ref{uniformint:lem} and \ref{uniformint:lem:moment} and can be applied directly to $(\norm{Uy_n-y_n})$, and of course in the special case that $X_n:=Y$ become simpler results on absolute continuity that can be instantiated to produce moduli of continuity on $Y$, and thus a modulus of integrability for $(\norm{Uy_n-y_n})$. However, when reducing the problem to continuity properties of $\sum_{i=0}^\infty \norm{\xi_i}$ and $\sum_{i=0}^\infty \norm{\delta_i}$, as possible under the assumptions of this section as shown in Lemma \ref{lem:ui:to:errors}, we conjecture that stronger assumptions such as those of Lemma \ref{uniformint:lem:moment} are not even required for the quantitative result, and that for concrete instantiations of the error terms (via e.g.\ minibatching as discussed in Section \ref{sec:complexity}), moduli of integrability for the sums $\sum_{i=0}^\infty \norm{\xi_i}$ and $\sum_{i=0}^\infty \norm{\delta_i}$ can be calculated explicitly, exploiting the fact that we have concrete knowledge of the distribution of the errors. However, we do not give further details here.

\section{Fast rates of asymptotic regularity}\label{sec:fast}

In this section, we focus on particular instantiations of the parameters together with suitable growth conditions on the errors that allow for fast rates of asymptotic regularity for the above iteration(s). For that, we begin with some general results on deriving linear rates of convergence for sequences of real numbers satisfying a general recursive inequality and we subsequently extend this to sequences of random variables and utilize these general results then to in turn derive the fast rates. Throughout the section, we will be very explicit about the exact kind of assumptions (i.e.\ \eqref{Hyp}, or \eqref{asHyp}, or the existence of common fixed points as in the last part of the previous section) that are placed on the iterations in question.

\subsection{General results on linear rates}

We begin with the crucial result on deriving fast rates of convergence for Halpern-style iterations in nonlinear optimization. This result is closely modelled after a seminal lemma by Sabach and Shtern \cite{SS2017}, first utilised in the context of proof mining in \cite{CKL2023}. Here we formulate the idea behind the lemma in a slightly different style to fit the iterations considered in this paper, and in this way our presentation is closer to the explicit closed-form bounds in \cite{BC2024}. However, as the proof is still nothing more than a careful implementation of the arguments given in \cite{SS2017}, we defer it to the appendix.

\begin{lemma}[essentially \cite{SS2017}]\label{sabach:stern}
Suppose that $(s_n),(c_n)$ are sequences of nonnegative real numbers satisfying
\[
s_{n+1}\leq (1-a_n)s_n+c_n
\]
for all $n\in\mathbb{N}$ where $(a_n)\subseteq [0,1]$. Then for all $m,K\in\NN$ we have
\[
s_{K+m+1}\leq A_K^{K+m}s_K+\sum_{i=K}^{K+m}A_{i+1}^{K+m}c_i
\]
for $A_j^k:=\prod_{i=j}^k(1-a_i)$, with $A_j^k:=1$ for $j>k$. In the special case that $a_n:=\alpha_{n+1}$ and $c_n\leq (\alpha_n-\alpha_{n+1})L$ for some $(\alpha_n)\subseteq [0,1]$ and $L>0$ we have
\[
s_n\leq \tilde A_1^n s_0+L\sum_{i=1}^n(\alpha_{i-1}-\alpha_i)\tilde A_{i+1}^n
\]
for all $n\in\mathbb{N}$ where $\tilde A_j^k:=\prod_{i=j}^k(1-\alpha_i)$, with $\tilde A_j^k:=1$ for $j>k$. If we furthermore define $\alpha_n:=2/(n+2)$ and assume that $s_0\leq L$, then $s_n\leq 2L/(n+2)$ for all $n\in\NN$.
\end{lemma}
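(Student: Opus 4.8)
The plan is to prove the three assertions in sequence, each building on the previous one, using only the telescoping of the recursive inequality $s_{n+1}\leq(1-a_n)s_n+c_n$.

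First I would establish the general product bound. Iterating the recurrence from index $K$, one gets $s_{K+1}\leq(1-a_K)s_K+c_K$, then $s_{K+2}\leq(1-a_{K+1})s_{K+1}+c_{K+1}\leq(1-a_{K+1})(1-a_K)s_K+(1-a_{K+1})c_K+c_{K+1}$, and so on. A straightforward induction on $m$ shows that $s_{K+m+1}\leq A_K^{K+m}s_K+\sum_{i=K}^{K+m}A_{i+1}^{K+m}c_i$, where the coefficient of $c_i$ is the product $\prod_{j=i+1}^{K+m}(1-a_j)$ of the decay factors that act on $c_i$ after it is introduced at step $i$; the convention $A_j^k:=1$ for $j>k$ handles the last term $c_{K+m}$ cleanly. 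This is purely a bookkeeping induction, so I would state it and verify the inductive step in one line.

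Second, for the special case $a_n:=\alpha_{n+1}$, I would take $K=0$ in the first part (so $m=n-1$, $s_{K+m+1}=s_n$), giving $s_n\leq A_0^{n-1}s_0+\sum_{i=0}^{n-1}A_{i+1}^{n-1}c_i$. Now $A_0^{n-1}=\prod_{i=0}^{n-1}(1-\alpha_{i+1})=\prod_{j=1}^{n}(1-\alpha_j)=\tilde A_1^n$, and similarly $A_{i+1}^{n-1}=\prod_{j=i+1}^{n-1}(1-\alpha_{j+1})=\prod_{j=i+2}^{n}(1-\alpha_j)=\tilde A_{i+2}^n$. Substituting $c_i\leq(\alpha_i-\alpha_{i+1})L$ and reindexing the sum by $k=i+1$ (so $k$ runs from $1$ to $n$, $\alpha_i-\alpha_{i+1}=\alpha_{k-1}-\alpha_k$, and $\tilde A_{i+2}^n=\tilde A_{k+1}^n$) yields exactly $s_n\leq\tilde A_1^n s_0+L\sum_{k=1}^n(\alpha_{k-1}-\alpha_k)\tilde A_{k+1}^n$. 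I would just do this index shift carefully, as it is the one place an off-by-one error could creep in.

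Third, for $\alpha_n:=2/(n+2)$ and $s_0\leq L$, I would factor $L$ out: $s_n\leq L\big(\tilde A_1^n+\sum_{k=1}^n(\alpha_{k-1}-\alpha_k)\tilde A_{k+1}^n\big)$ since $s_0\leq L$. With $1-\alpha_j=j/(j+2)$, the product telescopes: $\tilde A_{k+1}^n=\prod_{j=k+1}^n\frac{j}{j+2}=\frac{(k+1)(k+2)}{(n+1)(n+2)}$, and in particular $\tilde A_1^n=\frac{2}{(n+1)(n+2)}$. Also $\alpha_{k-1}-\alpha_k=\frac{2}{k+1}-\frac{2}{k+2}=\frac{2}{(k+1)(k+2)}$, so each summand is $\frac{2}{(k+1)(k+2)}\cdot\frac{(k+1)(k+2)}{(n+1)(n+2)}=\frac{2}{(n+1)(n+2)}$, independent of $k$. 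Hence the bracket equals $\frac{2}{(n+1)(n+2)}+n\cdot\frac{2}{(n+1)(n+2)}=\frac{2(n+1)}{(n+1)(n+2)}=\frac{2}{n+2}$, giving $s_n\leq 2L/(n+2)$. The main obstacle, such as it is, is purely clerical: getting every index shift and every telescoping product exactly right; there is no conceptual difficulty, the content being entirely a careful unwinding of the recurrence.
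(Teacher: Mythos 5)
Your proposal is correct and follows essentially the same route as the paper: unroll the recurrence to get the product formula, reindex to pass from $a_n=\alpha_{n+1}$ to the $\tilde A$ notation, and then exploit the telescoping product $\prod_{j=i}^n\frac{j}{j+2}=\frac{i(i+1)}{(n+1)(n+2)}$ to make each summand constant. The only cosmetic difference is that you take $K=0$, $m=n-1$ to bound $s_n$ directly (with the $n=0$ case being the trivial equality $s_0\leq s_0$), whereas the paper sets $m=n$ to bound $s_{n+1}$ and handles $s_0=\tilde A_1^0 s_0$ as a separate observation.
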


The following is an adaptation of the special case of the previous lemma concerning fast rates to sequences of random variables and as we will see in the following, it assumes a similarly important role for deriving linear rates of almost sure convergence.

\begin{lemma}\label{sabach:stern:as}
Suppose that $(X_n)$, $(C_n)$ are nonnegative stochastic processes satisfying
\[
X_{n+1}\leq (1-\alpha_{n+1})X_n+C_n
\]
almost surely for any $n\in\mathbb{N}$ where $\alpha_n:=2/(n+2)$ and where $\EE[C_n]\leq (\alpha_n-\alpha_{n+1})L$ almost surely for all $n\in\NN$ where $L\geq \EE[X_0]$. Then 
\[
\EE[X_n]\leq\frac{2L}{n+2}\text{ and }\PP\left(\exists i\geq n\left( X_i\geq\varepsilon\right)\right)\leq\frac{1}{\varepsilon}\frac{4L}{n+2}
\]
for all $n\in\mathbb{N}$.
\end{lemma}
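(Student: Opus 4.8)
The plan is to derive both inequalities by combining the deterministic fast-rate lemma (Lemma~\ref{sabach:stern}) applied to the expectations with Markov's inequality in the style of Lemma~\ref{prob:to:as}. First I would take expectations in the defining recursion $X_{n+1}\leq (1-\alpha_{n+1})X_n+C_n$, which yields $\EE[X_{n+1}]\leq (1-\alpha_{n+1})\EE[X_n]+\EE[C_n]$, and here the hypothesis $\EE[C_n]\leq (\alpha_n-\alpha_{n+1})L$ lets me apply the final part of Lemma~\ref{sabach:stern} with $s_n:=\EE[X_n]$, $c_n:=\EE[C_n]$, and the assumption $L\geq\EE[X_0]=s_0$. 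This immediately gives $\EE[X_n]\leq 2L/(n+2)$, the first claimed bound.

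For the tail bound, I would mimic the construction in the proof of Lemma~\ref{prob:to:as}: the difficulty is that $(X_n)$ is not monotone, only $(1-\alpha_{n+1})$-contractive, so I cannot directly read off a supremum bound from a single value. However, from $X_{n+1}\leq (1-\alpha_{n+1})X_n+C_n\leq X_n+C_n$ (pointwise, almost surely), I get that $U_n:=X_n+\sum_{i=n}^\infty C_i$ is almost surely monotone decreasing, exactly as in Lemma~\ref{prob:to:as}. Then $\PP(\exists i\geq n\,(X_i\geq\varepsilon))\leq\PP(\exists i\geq n\,(U_i\geq\varepsilon))=\PP(U_n\geq\varepsilon)\leq \EE[U_n]/\varepsilon$ by monotonicity and Markov. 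It remains to bound $\EE[U_n]=\EE[X_n]+\sum_{i=n}^\infty\EE[C_i]$: we already have $\EE[X_n]\leq 2L/(n+2)$, and since $\EE[C_i]\leq(\alpha_i-\alpha_{i+1})L$ the tail sum telescopes to $\sum_{i=n}^\infty\EE[C_i]\leq L\alpha_n=2L/(n+2)$. Hence $\EE[U_n]\leq 4L/(n+2)$ and the second inequality follows.

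The main obstacle — really the only subtlety — is recognising that one should not try to make $(X_n)$ itself monotone (it isn't), but instead introduce the corrected process $U_n$ using the \emph{absolutely convergent} tail $\sum_{i\geq n}C_i$, whose convergence is guaranteed by $\sum_i\EE[C_i]\leq L\sum_i(\alpha_i-\alpha_{i+1})=L\alpha_0<\infty$; one must also note this forces $C_n\to 0$ and $\sum_i C_i<\infty$ almost surely so that $U_n$ is well-defined a.s. Everything else is the telescoping computation $\sum_{i=n}^\infty(\alpha_i-\alpha_{i+1})=\alpha_n$ together with $\alpha_n=2/(n+2)$, and a single application of Markov's inequality, all of which are routine.
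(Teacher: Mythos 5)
Your proposal is correct and follows essentially the same route as the paper's proof: take expectations and apply Lemma~\ref{sabach:stern} for the mean bound, then introduce the corrected monotone process $U_n := X_n + \sum_{i\geq n} C_i$ exactly as in Lemma~\ref{prob:to:as}, bound $\EE[U_n]$ by $2L/(n+2) + L\alpha_n = 4L/(n+2)$ via the telescoping tail sum, and finish with Markov's inequality. Your extra observation that the a.s.\ convergence of $\sum_i C_i$ (ensuring $U_n$ is well-defined) follows from $\sum_i\EE[C_i]<\infty$ is a detail the paper leaves implicit by invoking Lemma~\ref{prob:to:as}, but the substance of the argument is identical.
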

\begin{proof}
From the fact that $X_{n+1}\leq (1-\alpha_{n+1})X_n+C_n$ holds almost surely, we immediately derive $\EE[X_{n+1}]\leq (1-\alpha_{n+1})\EE[X_n]+\EE[C_n]$ and Lemma \ref{sabach:stern} yields $\EE[X_n]\leq 2L/(n+2)$. Proceeding as in the proof of Lemma \ref{prob:to:as}, noting that we in particular have $X_{n+1}\leq X_n+C_n$ almost surely, we similarly derive
\[
\PP(\exists n\geq N(U_n\geq\varepsilon))\leq \frac{1}{\varepsilon}\left(\EE[X_N]+\sum_{i=N}^\infty\EE[C_i]\right)
\]
for $U_n:=X_n+\sum_{i=n}^\infty C_i$. In particular, we have $\EE[X_N]\leq 2L/(N+2)$ and
\[
\sum_{i=N}^\infty\EE[C_i]\leq L\sum_{i=N}^\infty (\alpha_{i}-\alpha_{i+1})=L\alpha_N=\frac{2L}{N+2}
\]
so that $\PP(\exists n\geq N(U_n\geq\varepsilon))\leq \frac{1}{\varepsilon}\frac{4L}{N+2}$. This gives 
\[
\PP\left(\exists n\geq N\left( X_n\geq\varepsilon\right)\right)\leq\PP\left(\exists n\geq N\left(U_n\geq\varepsilon\right)\right)\leq \frac{1}{\varepsilon}\frac{4L}{N+2}
\]
again as in Lemma \ref{prob:to:as}.
\end{proof}

\begin{remark}
Note that from the conclusions of Lemma \ref{sabach:stern:as}, it is rather immediate to give corresponding rates for $\EE[X_n]\to 0$ and $X_n\to 0$ almost surely, e.g.\ by setting $\Phi(\lambda,\varepsilon):=\ceil*{4L/\varepsilon\lambda}$ for the latter, but we prefer the above formulations in this section to make the constants very explicit.
\end{remark}

\subsection{Linear rates of asymptotic regularity}

We now begin by establishing linear rates of asymptotic regularity for the iterations $(x_n)$ and $(y_n)$ in the special case of parameters 
\[
\alpha_n=\frac{2}{n+2}\text{ and }\beta_n=\beta\in (0,1).\tag{Par}\label{Par}
\]

\begin{theorem}\label{thm:fastX}
Let $(x_n),(y_n)$ be the sequences generated by \eqref{sHM} for the parameters as in \eqref{Par}. Assume \eqref{Hyp} with constant $K_0$. Also, assume that $\EE[\norm{\xi_n}]\leq K_1/(n+2)^2$ and $\EE[\norm{\delta_n}]\leq K_2/(n+2)^2$. Then
\[
\EE[\norm{x_n-x_{n+1}}]\leq \frac{2L}{n+2}\text{ and }\PP\left(\exists i\geq n\left( \norm{x_i-x_{i+1}}\geq\varepsilon\right)\right)\leq \frac{1}{\varepsilon}\frac{4L}{n+2}
\]
for all $n\in\NN$ and $\varepsilon>0$, where $L=2K_0+2K_1+2K_2$ in both cases. 
\end{theorem}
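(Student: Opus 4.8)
The plan is to reduce the statement to a single application of Lemma~\ref{sabach:stern:as} to the nonnegative stochastic process $X_n:=\norm{x_{n+1}-x_n}$. First I would apply Lemma~\ref{res:recurrence:y-to-x-to-y} exactly as in the proof of Theorem~\ref{res:x:ar}: combining \eqref{eqn:y-to-x} and \eqref{eqn:x-to-y} gives, pointwise everywhere on $\Omega$,
\[
\norm{x_{n+2}-x_{n+1}}\leq(1-\alpha_{n+1})\norm{x_{n+1}-x_n}+c_n,
\]
where, since $\beta_n\equiv\beta$ is now constant, the term $|\beta_{n+1}-\beta_n|$ vanishes and $c_n$ is simply
\[
c_n=\norm{\xi_{n+1}-\xi_n}+|\alpha_{n+1}-\alpha_n|\bigl(\norm{Tx_n-u}+\norm{\xi_n}\bigr)+\norm{\delta_{n+1}-\delta_n}.
\]
With $C_n:=c_n$ this is exactly the recurrence $X_{n+1}\leq(1-\alpha_{n+1})X_n+C_n$ with $\alpha_n=2/(n+2)$ required by Lemma~\ref{sabach:stern:as}, so everything comes down to checking the two quantitative hypotheses of that lemma for $L:=2K_0+2K_1+2K_2$: that $\EE[X_0]\leq L$ and that $\EE[C_n]\leq(\alpha_n-\alpha_{n+1})L$ for all $n$.

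For the step bound the crucial elementary observation is $\alpha_n-\alpha_{n+1}=\tfrac{2}{(n+2)(n+3)}$ together with $\tfrac{1}{(n+3)^2}\leq\tfrac{1}{(n+2)^2}\leq\tfrac{2}{(n+2)(n+3)}=\alpha_n-\alpha_{n+1}$ (the last step being equivalent to $n+3\leq2(n+2)$). Taking expectations in $c_n$ and using the hypotheses $\EE[\norm{\xi_n}]\leq K_1/(n+2)^2$, $\EE[\norm{\delta_n}]\leq K_2/(n+2)^2$ and \eqref{Hyp}, one then bounds $\EE[\norm{\delta_{n+1}-\delta_n}]\leq 2K_2(\alpha_n-\alpha_{n+1})$, $|\alpha_{n+1}-\alpha_n|\EE[\norm{Tx_n-u}]\leq K_0(\alpha_n-\alpha_{n+1})$, and---combining the two $\xi$-contributions---$\EE[\norm{\xi_{n+1}-\xi_n}]+|\alpha_{n+1}-\alpha_n|\EE[\norm{\xi_n}]\leq 2K_1(\alpha_n-\alpha_{n+1})$, whence $\EE[c_n]\leq(K_0+2K_1+2K_2)(\alpha_n-\alpha_{n+1})\leq(\alpha_n-\alpha_{n+1})L$. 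For the base case I would reuse the estimate from the proof of Theorem~\ref{res:x:ar}, noting that here $\sup_n\EE[\norm{\xi_n}]\leq K_1/4$ and $\sup_n\EE[\norm{\delta_n}]\leq K_2/4$, so that $\EE[X_0]=\EE[\norm{x_1-x_0}]\leq 2K_0+\tfrac{K_1}{4}+\tfrac{K_2}{4}\leq L$.

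With both hypotheses verified, Lemma~\ref{sabach:stern:as} yields directly $\EE[X_n]\leq 2L/(n+2)$ and $\PP(\exists i\geq n\,(X_i\geq\varepsilon))\leq\tfrac{1}{\varepsilon}\tfrac{4L}{n+2}$, which, recalling $X_n=\norm{x_{n+1}-x_n}=\norm{x_n-x_{n+1}}$, is precisely the asserted conclusion. I expect the only genuinely delicate point to be the constant bookkeeping in the step bound: one must make sure the cross term $|\alpha_{n+1}-\alpha_n|\norm{\xi_n}$---which in expectation is of order $(n+2)^{-4}$ and hence negligible against $\alpha_n-\alpha_{n+1}\sim(n+2)^{-2}$---still fits, together with $\norm{\xi_{n+1}-\xi_n}$, inside the budget $2K_1(\alpha_n-\alpha_{n+1})$, and that the slack in the $K_0$-coefficient of $\EE[c_n]$ is exactly what the base case $\EE[X_0]\leq L$ consumes. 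No ingredients beyond the recurrence of Lemma~\ref{res:recurrence:y-to-x-to-y} and Lemma~\ref{sabach:stern:as} are needed.
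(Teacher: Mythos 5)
Your proof is correct and takes essentially the same route as the paper: both set $X_n:=\norm{x_n-x_{n+1}}$, derive the recurrence $X_{n+1}\leq(1-\alpha_{n+1})X_n+C_n$ from Lemma~\ref{res:recurrence:y-to-x-to-y} with the $\beta$-term dropping out, and then feed the verified hypotheses $\EE[X_0]\leq L$ and $\EE[C_n]\leq(\alpha_n-\alpha_{n+1})L$ into Lemma~\ref{sabach:stern:as}. Your bookkeeping is in fact slightly more careful than the paper's, since you absorb the cross-term $(\alpha_n-\alpha_{n+1})\EE[\norm{\xi_n}]$ into the $2K_1$ budget together with $\EE[\norm{\xi_{n+1}-\xi_n}]$, rather than leaving it alongside the $K_0$ contribution as the paper tacitly does; this makes the inequality $\EE[C_n]\leq(K_0+2K_1+2K_2)(\alpha_n-\alpha_{n+1})\leq(\alpha_n-\alpha_{n+1})L$ hold without any hidden relation between $K_0$ and $K_1$.
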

\begin{proof}
As in the proof of Theorem \ref{res:x:ar}, we have $X_{n+1}\leq (1-\alpha_{n+1})X_n+C_n$ for $X_n:=\norm{x_n-x_{n+1}}$ and
\[
C_n:=\norm{\xi_{n+1}-\xi_n}+\norm{\delta_{n+1}-\delta_n}+(\alpha_n-\alpha_{n+1})(\norm{Tx_n-u}+\norm{\xi_n}).
\]
Also following the proof of Theorem \ref{res:x:ar} we have
\[
\EE[X_0]=\EE[\norm{x_{0}-x_{1}}]\leq 2K_0+K_1+K_2\leq L.
\]
So it remains to show that $\EE[C_n]\leq (\alpha_n-\alpha_{n+1})L$, and for this it suffices to show that
\[
\EE[\norm{\xi_{n+1}-\xi_n}]\leq (\alpha_n-\alpha_{n+1})\cdot 2K_1\text{ and }\EE[\norm{\delta_{n+1}-\delta_n}]\leq (\alpha_n-\alpha_{n+1})\cdot 2K_2.
\]
We conclude by observing that
\begin{align*}
\EE[\norm{\xi_{n+1}-\xi_n}]&\leq \EE[\norm{\xi_{n+1}}]+\EE[\norm{\xi_n}]=\frac{K_1}{(n+3)^2}+\frac{K_1}{(n+2)^2}\\
&\leq \frac{2K_1}{(n+2)^2}\leq \frac{4K_1}{(n+2)(n+3)}=(\alpha_n-\alpha_{n+1})\cdot 2K_1
\end{align*}
and similarly for $(\delta_n)$ and $K_2$. The rates then follow from Lemma \ref{sabach:stern:as}.
\end{proof}

\begin{remark}\label{rem:sumToAS}
Before moving to the other asymptotic regularity results, we just briefly note that the asymptotic condition $\EE[\norm{\xi_n}]\leq K_1/(n+2)^2$ naturally implies that $\sum_{n=0}^\infty\EE[\norm{\xi_n}]<\infty$ with a rather simple rate of convergence that can be easily calculated from the fact that
\begin{align*}
\sum_{n=N}^\infty\EE[\norm{\xi_n}]&\leq K_1\sum_{n=N}^\infty\frac{1}{(n+2)^2}\leq K_1\sum_{n=N}^\infty\frac{1}{(n+1)(n+2)}\\
&= K_1\sum_{n=N}^\infty\left(\frac{1}{n+1}-\frac{1}{n+2}\right)= \frac{K_1}{N+1}
\end{align*}
for $N\geq 1$. Similarly, this applies to $\delta_n$ and $K_2$. In particular, as highlighted before in Remark \ref{rem:sumExpAS}, we have
\[
\PP(\exists n\geq N(\norm{\xi_n}\geq\varepsilon))\leq\sum_{n=N}^\infty\PP(\norm{\xi_n}\geq\varepsilon)\leq\sum_{n=N}^\infty\frac{\EE[\norm{\xi_n}]}{\varepsilon}\leq\frac{1}{\varepsilon}\frac{K_1}{N+1}\leq\frac{1}{\varepsilon}\frac{2K_1}{N+2}.
\]
\end{remark}

Now, in the case of sequence $(y_n)$, the above then immediately implies the following:

\begin{theorem}\label{thm:fastY}
Let $(x_n),(y_n)$ be the sequences generated by \eqref{sHM} for the parameters as in \eqref{Par}. Assume \eqref{Hyp} with constant $K_0$. Also, assume that $\EE[\norm{\xi_n}]\leq K_1/(n+2)^2$ and $\EE[\norm{\delta_n}]\leq K_2/(n+2)^2$. Then
\[
\EE[\norm{y_n-y_{n+1}}]\leq \frac{2L}{n+2}
\]
for all $n\in\NN$ where $L$ can be given as an integer linear combination of $K_0$, $K_1$ and $K_2$. If we assume \eqref{asHyp} with $K_0$ and $Y$, then
\[
\PP\left(\exists i\geq n\left( \norm{y_i-y_{i+1}}\geq\varepsilon\right)\right)\leq \frac{1}{\varepsilon}\frac{4L}{n+2}
\]
for all $n\in\mathbb{N}$ and $\varepsilon>0$, with a suitable $L$ constructed similarly.
\end{theorem}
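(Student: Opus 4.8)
The plan is to handle the two assertions in turn, getting the expectation bound from the recurrence \eqref{eqn:y-to-x} fed by Theorem \ref{thm:fastX}, and the almost sure bound by a monotone-envelope-plus-Markov argument in the style of Lemmas \ref{prob:to:as} and \ref{sabach:stern:as}, fed in turn by the expectation bound just obtained.

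For the expectation bound I would start from \eqref{eqn:y-to-x} of Lemma \ref{res:recurrence:y-to-x-to-y}; dropping the harmless factor $(1-\alpha_{n+1})\le 1$ and estimating $\norm{Tx_n+\xi_n-u}\le\norm{Tx_n-u}+\norm{\xi_n}$ gives, pointwise,
\[
\norm{y_{n+1}-y_n}\le\norm{x_{n+1}-x_n}+\norm{\xi_{n+1}-\xi_n}+|\alpha_{n+1}-\alpha_n|\bigl(\norm{Tx_n-u}+\norm{\xi_n}\bigr).
\]
Taking expectations and inserting the bound $\EE[\norm{x_{n+1}-x_n}]\le(4K_0+4K_1+4K_2)/(n+2)$ from Theorem \ref{thm:fastX}, together with \eqref{Hyp}, the hypothesis $\EE[\norm{\xi_n}]\le K_1/(n+2)^2\le K_1/4$, and $|\alpha_{n+1}-\alpha_n|=2/((n+2)(n+3))$, each of the three remaining summands is seen to be $O(1/(n+2))$ with an explicit integer constant: e.g.\ $\EE[\norm{\xi_{n+1}-\xi_n}]\le 2K_1/(n+2)^2\le K_1/(n+2)$ and $|\alpha_{n+1}-\alpha_n|(K_0+K_1/4)\le(K_0+K_1/4)/(n+2)$. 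Collecting everything gives $\EE[\norm{y_n-y_{n+1}}]\le 2L/(n+2)$ with, say, $L:=3K_0+3K_1+2K_2$, an integer linear combination of $K_0,K_1,K_2$.

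For the almost sure bound I would reuse the two-step recurrence already exploited in the proof of Theorem \ref{res:y:ar}: combining \eqref{eqn:y-to-x} and \eqref{eqn:x-to-y} — and using that $\beta_n\equiv\beta$, so the $|\beta_{n+1}-\beta_n|$-term disappears — gives pointwise $\norm{y_{n+2}-y_{n+1}}\le\norm{y_{n+1}-y_n}+d_n$ where $d_n:=\norm{\xi_{n+2}-\xi_{n+1}}+|\alpha_{n+2}-\alpha_{n+1}|(\norm{Tx_{n+1}-u}+\norm{\xi_{n+1}})+\norm{\delta_{n+1}-\delta_n}$. Setting $X_n:=\norm{y_{n+1}-y_n}$, this reads $X_{n+1}\le X_n+d_n$, so as in Lemma \ref{prob:to:as} the envelope $U_n:=X_n+\sum_{i\ge n}d_i$ is almost surely non-increasing and dominates $X_n$, whence by Markov
\[
\PP\bigl(\exists i\ge n\,(\norm{y_i-y_{i+1}}\ge\varepsilon)\bigr)\le\PP\bigl(U_n\ge\varepsilon\bigr)\le\frac{\EE[X_n]+\sum_{i\ge n}\EE[d_i]}{\varepsilon}.
\]
Now $\EE[X_n]\le 2L/(n+2)$ by the first part, and it remains to bound the tail $\sum_{i\ge n}\EE[d_i]$ by some $M/(n+2)$: the two difference-of-noise terms are controlled by the $1/(m+2)^2$-decay exactly as in Remark \ref{rem:sumToAS}, while the middle term telescopes, $\sum_{i\ge n}|\alpha_{i+2}-\alpha_{i+1}|=\alpha_{n+1}=2/(n+3)\le 2/(n+2)$, and there we invoke \eqref{asHyp} (which yields $\EE[\norm{Tx_{i+1}-u}]\le\EE[Y]\le K_0$). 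This gives $M:=2K_0+3K_1+3K_2$, and hence the claimed bound $\frac1\varepsilon\frac{4L'}{n+2}$ holds for any integer $L'$ with $4L'\ge 2L+M$ — the ``suitable $L$ constructed similarly''.

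The inequality manipulations are routine; the one place that needs a little care is the bookkeeping in the almost sure case — the index shift turning the two-step recurrence into $X_{n+1}\le X_n+d_n$ with $X_n=\norm{y_{n+1}-y_n}$, and the check that every summand of $\EE[d_i]$ decays like $1/(i+2)^2$ except the $|\alpha_{i+2}-\alpha_{i+1}|$-term, which must instead be handled by telescoping so that the tail $\sum_{i\ge n}\EE[d_i]$ is genuinely $O(1/(n+2))$ rather than merely summable. I do not anticipate any obstacle beyond this.
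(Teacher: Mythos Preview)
Your proof of the expectation bound is essentially the same as the paper's: both start from \eqref{eqn:y-to-x}, drop the factor $(1-\alpha_{n+1})$, feed in the bound on $\EE[\norm{x_{n+1}-x_n}]$ from Theorem \ref{thm:fastX}, and estimate the remaining terms directly. (The paper uses the cruder bound $|\alpha_{n+1}-\alpha_n|\le\alpha_n$ where you keep the exact expression, but this is cosmetic.)

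For the almost sure bound, however, you take a genuinely different route. The paper stays with the pointwise inequality $\norm{y_{n+1}-y_n}\le\norm{x_{n+1}-x_n}+\norm{\xi_{n+1}-\xi_n}+\alpha_n(\norm{Tx_n-u}+\norm{\xi_n})$ and bounds the tail probability of each summand separately by a union bound: the first via the almost sure part of Theorem \ref{thm:fastX}, the second via Remark \ref{rem:sumToAS}, and the third by using \eqref{asHyp} to dominate $\norm{Tx_n-u}$ pointwise by $Y$ and then applying Markov to the single event $\{Y\ge\varepsilon/2\alpha_N\}$. Your approach instead passes through the two-step recurrence $X_{n+1}\le X_n+d_n$ for $X_n=\norm{y_{n+1}-y_n}$ and applies the monotone-envelope argument of Lemma \ref{prob:to:as}, which only requires control of $\EE[X_n]$ and of the tail sum $\sum_{i\ge n}\EE[d_i]$. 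Both are correct; the paper's argument is shorter and more direct, while yours has the pleasant side effect that it never uses the pointwise domination $\norm{Tx_n-u}\le Y$ --- your invocation of \eqref{asHyp} is in fact superfluous, since $\EE[\norm{Tx_{i+1}-u}]\le K_0$ already follows from \eqref{Hyp}. So your route actually establishes the almost sure bound under the weaker hypothesis \eqref{Hyp} alone.
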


The proof is rather routine and hence deferred to the appendix.

\begin{remark}\label{lem:ORemark}
Reformulated, the above results in particular state that if $(x_n),(y_n)$ are the sequences generated by \eqref{sHM} for parameters as in \eqref{Par} under the assumption \eqref{Hyp} and $\EE(\norm{\xi_n})=O(1/n^2)$ as well as $\EE(\norm{\delta_n})=O(1/n^2)$, then $\EE[\norm{x_n-x_{n+1}}]=O(1/n)$ as well as $\EE[\norm{y_n-y_{n+1}}]=O(1/n)$.
\end{remark}

\subsection{Linear rates of asymptotic regularity relative to the mappings in special cases}

We now discuss fast rates for the special cases \eqref{sH} and \eqref{sKMT}. As they are also routine, all proofs in the present section are deferred to the appendix.

In the special case of the stochastic Halpern iteration, which we reobtain (as discussed before) by setting $U:=\mathrm{Id}$ as well as $\delta_n:=0$, we get the following fast rates:

\begin{theorem}\label{thm:fastHalpern}
Let $(x_n),(y_n)$ be the sequences generated by \eqref{sHM} for parameters as in \eqref{Par} and where $U:=\mathrm{Id}$ and $\delta_n:=0$. Assume \eqref{Hyp} with constant $K_0$. Also, assume that $\EE[\norm{\xi_n}]\leq K_1/(n+2)^2$. Then
\[
\EE[\norm{Tx_n-x_n}]\leq \frac{2L}{n+2}
\]
for all $n\in\NN$ where $L$ can be given as an integer linear combination of $K_0$ and $K_1$. If we assume \eqref{asHyp} with $K_0$ and $Y$, then
\[
\PP\left(\exists i\geq n\left( \norm{Tx_i-x_i}\geq\varepsilon\right)\right)\leq\frac{1}{\varepsilon}\frac{4L}{n+2}
\]
for all $n\in\mathbb{N}$ and $\varepsilon>0$, with a suitable $L$ constructed similarly.
\end{theorem}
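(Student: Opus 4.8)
The plan is to reduce the whole statement to the discrete-velocity estimate already obtained in Theorem~\ref{thm:fastX}. First I would record the drastic collapse of the scheme in this case: with $U:=\mathrm{Id}$ and $\delta_n:=0$ the second line of \eqref{sHM} becomes $x_{n+1}=(1-\beta_n)y_n+\beta_ny_n=y_n$, so the iteration is literally the stochastic Halpern iteration and $x_{n+1}=(1-\alpha_n)(Tx_n+\xi_n)+\alpha_nu$ for all $n$. From this I extract the pointwise identity $x_{n+1}-Tx_n=\alpha_n(u-Tx_n)+(1-\alpha_n)\xi_n$, hence $\norm{x_{n+1}-Tx_n}\le\alpha_n\norm{Tx_n-u}+\norm{\xi_n}$ everywhere on $\Omega$, and combining this with the triangle inequality $\norm{Tx_n-x_n}\le\norm{x_n-x_{n+1}}+\norm{x_{n+1}-Tx_n}$ I obtain the master pointwise estimate
\[
\norm{Tx_n-x_n}\le\norm{x_n-x_{n+1}}+\alpha_n\norm{Tx_n-u}+\norm{\xi_n}
\]
valid for every $n\in\NN$.

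For the bound in expectation I would take expectations in the master estimate and feed in the three ingredients: $\EE[\norm{x_n-x_{n+1}}]\le 2L_0/(n+2)$ with $L_0=2K_0+2K_1$ by Theorem~\ref{thm:fastX} (applicable since $\delta_n=0$ forces $K_2=0$ there), $\alpha_n\,\EE[\norm{Tx_n-u}]\le 2K_0/(n+2)$ by \eqref{Hyp} and the choice of $\alpha_n$, and $\EE[\norm{\xi_n}]\le K_1/(n+2)^2\le K_1/(n+2)$ by hypothesis. Adding the three fractions gives $\EE[\norm{Tx_n-x_n}]\le(2L_0+2K_0+K_1)/(n+2)\le 2L/(n+2)$ for $L:=L_0+K_0+K_1=3K_0+3K_1$, which is the claimed integer linear combination; one checks this also covers $n=0$ (where $\alpha_0=1$ and Theorem~\ref{thm:fastX} gives $\EE[\norm{x_0-x_1}]\le L_0$).

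For the almost sure statement under \eqref{asHyp} I would plug the same master pointwise estimate into a three-way union bound over indices $i\ge n$, splitting $\varepsilon$ into thirds. The term $\norm{x_i-x_{i+1}}$ is controlled directly by the probabilistic conclusion of Theorem~\ref{thm:fastX}, $\PP(\exists i\ge n\,(\norm{x_i-x_{i+1}}\ge\varepsilon/3))\le\tfrac{3}{\varepsilon}\tfrac{4L_0}{n+2}$; the term $\norm{\xi_i}$ is controlled by the Markov-type tail bound $\PP(\exists i\ge n\,(\norm{\xi_i}\ge\varepsilon/3))\le\tfrac{3}{\varepsilon}\tfrac{2K_1}{n+2}$ recorded in Remark~\ref{rem:sumToAS} (using $\EE[\norm{\xi_i}]\le K_1/(i+2)^2$); and the term $\alpha_i\norm{Tx_i-u}$ is handled exactly as in the proofs of Theorems~\ref{res:other:ar:as} and \ref{thm:fastY}, namely by dominating $\norm{Tx_i-u}\le Y$ almost surely via \eqref{asHyp}, using that $\alpha_i\le\alpha_n$ for $i\ge n$ to get $\{\exists i\ge n\,(\alpha_i\norm{Tx_i-u}\ge\varepsilon/3)\}\subseteq\{Y\ge\varepsilon/(3\alpha_n)\}$ up to a null set, and then applying Markov's inequality together with $\EE[Y]\le K_0$ to bound this by $\tfrac{3}{\varepsilon}\tfrac{2K_0}{n+2}$. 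Summing the three contributions yields a bound of the form $\tfrac{1}{\varepsilon}\tfrac{4L}{n+2}$ for a suitable integer linear combination $L$ of $K_0$ and $K_1$ (e.g.\ $L=8K_0+8K_1$ comfortably works).

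I do not expect a serious obstacle here: there is no new analytic idea beyond the master pointwise estimate and the already-available linear bound on the discrete velocity. The only genuine care required is bookkeeping — tracking the numerical constants through the three-way splits so that the final $L$ is visibly an integer linear combination of $K_0$ and $K_1$, and checking that all estimates still hold at the small indices. The single place where the stronger hypothesis \eqref{asHyp} (rather than just \eqref{Hyp}) is actually used is in taming the term $\alpha_i\norm{Tx_i-u}$ in the almost sure case, and this is dispatched by the now-routine Markov argument used repeatedly in the preceding sections.
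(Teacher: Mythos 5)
Your proof is correct and follows the same overall strategy as the paper: reduce $\norm{Tx_n-x_n}$ pointwise to a sum of the discrete velocity, the Tikhonov term $\alpha_n\norm{Tx_n-u}$ and the noise $\norm{\xi_n}$, then invoke Theorem \ref{thm:fastX}, \eqref{Hyp} (resp.\ \eqref{asHyp} with Markov's inequality), and the decay of $\EE[\norm{\xi_n}]$. The one concrete difference is your master estimate: by exploiting the collapse $x_{n+1}=y_n$ directly you obtain $\norm{Tx_n-x_n}\le\norm{x_n-x_{n+1}}+\alpha_n\norm{Tx_n-u}+\norm{\xi_n}$, whereas the paper specialises the inequalities from Theorem \ref{res:other:ar} and arrives at $\norm{Tx_n-x_n}\leq 3\norm{x_{n+1}-x_n}+\alpha_n\norm{Tx_n-u}+\norm{\xi_n}$, picking up a factor of $3$ from the extra triangle-inequality passes through $y_n$ that are necessary in the general two-operator setting but redundant once $x_{n+1}=y_n$. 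Both routes produce an $L$ that is an integer linear combination of $K_0$ and $K_1$, so the stated conclusions agree; your version simply gives the tighter numerical constants.
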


Theorem \ref{thm:fastHalpern} is closely related to \cite[Theorem 3.3]{BC2024}, but with adjusted step-sizes that now provide \emph{exact} linear rates (without logarithmic factors).

In the special case of the stochastic Krasnoselskii-Mann iteration with Tikhonov regularization terms, which we re-obtain by setting $T:=\mathrm{Id}$ as well as $\xi_n:=0$, we get the following fast rates in the above special case:

\begin{theorem}\label{thm:fastTM}
Let $(x_n),(y_n)$ be the sequences generated by \eqref{sHM} for parameters 
as in \eqref{Par} and where $T:=\mathrm{Id}$ and $\xi_n:=0$. Assume \eqref{Hyp} with constant $K_0$. Also, assume that $\EE[\norm{\delta_n}]\leq K_2/(n+2)^2$. Lastly, let $B\geq 1/(1-\beta)$. Then
\[
\EE[\norm{Ux_{n}-x_{n}}]\leq \frac{2L}{n+2}
\]
for all $n\in\NN^*$ where $L$ can be constructed in terms of $K_0$, $K_2$ and $B$. If we assume \eqref{asHyp} with $K_0$ and $Y$, then
\[
\PP\left(\exists i\geq n\left( \norm{Ux_i-x_i}\geq\varepsilon\right)\right)\leq\frac{1}{\varepsilon}\frac{4L}{n+2}
\]
for all $n\in\mathbb{N}^*$ and $\varepsilon>0$, with a suitable $L$ constructed similarly.
\end{theorem}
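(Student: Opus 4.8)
The plan is to exploit the collapse of the iteration in the case $T=\mathrm{Id}$, $\xi_n=0$, where both $y_n$ and the displacement $Uy_n-y_n$ become explicit algebraic expressions. Here $y_n=(1-\alpha_n)x_n+\alpha_nu$, so $\norm{x_n-y_n}=\alpha_n\norm{x_n-u}$, and rearranging $x_{n+1}=(1-\beta)(Uy_n+\delta_n)+\beta y_n$ yields $Uy_n-y_n=\frac{1}{1-\beta}(x_{n+1}-y_n)-\delta_n$, hence $\norm{Uy_n-y_n}\leq B\norm{x_{n+1}-y_n}+\norm{\delta_n}\leq B\norm{x_{n+1}-x_n}+B\alpha_n\norm{x_n-u}+\norm{\delta_n}$, using $B\geq 1/(1-\beta)$. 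Feeding this into the estimate $\norm{Ux_n-x_n}\leq 2\norm{x_n-y_n}+\norm{Uy_n-y_n}$ (which follows from the triangle inequality and nonexpansivity of $U$, exactly as in the proof of Theorem~\ref{res:other:ar}) gives the pointwise inequality
\[
\norm{Ux_n-x_n}\leq (2+B)\alpha_n\norm{x_n-u}+B\norm{x_{n+1}-x_n}+\norm{\delta_n}
\]
holding everywhere on $\Omega$ for all $n\in\mathbb{N}$.

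For the rate in mean I would take expectations and bound the three terms: $\alpha_n=2/(n+2)$ and $\EE[\norm{x_n-u}]=\EE[\norm{Tx_n-u}]\leq K_0$ by \eqref{Hyp}; $\EE[\norm{\delta_n}]\leq K_2/(n+2)^2\leq K_2/(n+2)$; and $\EE[\norm{x_{n+1}-x_n}]\leq 2L_X/(n+2)$ with $L_X:=2K_0+2K_2$ from Theorem~\ref{thm:fastX} (note that $K_1=0$ here). Summing yields $\EE[\norm{Ux_n-x_n}]\leq 2L/(n+2)$ for an $L$ of the form $(2+3B)K_0+(1+2B)K_2$ (rounding up the $K_2/2$ contribution), constructed from $K_0$, $K_2$, $B$ as required.

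For the almost-sure rate under \eqref{asHyp} I would apply the union bound (Fréchet inequalities) to the same pointwise inequality, reducing to three tail bounds. The contribution $B\norm{x_{n+1}-x_n}$ is handled by the almost-sure part of Theorem~\ref{thm:fastX}, and the contribution $\norm{\delta_n}$ by Remark~\ref{rem:sumToAS}, which converts $\EE[\norm{\delta_n}]\leq K_2/(n+2)^2$ into $\PP(\exists i\geq n\,(\norm{\delta_i}\geq\varepsilon))\leq \frac{1}{\varepsilon}\frac{2K_2}{n+2}$. For the anchor term $(2+B)\alpha_n\norm{x_n-u}$ no rate of convergence is available, since $\norm{x_n-u}$ need not tend to $0$; instead, precisely as in the proofs of Theorems~\ref{res:other:ar:as} and \ref{thm:fastY}, I use that $(\alpha_i)$ is decreasing and $\norm{x_i-u}=\norm{Tx_i-u}\leq Y$ with $\EE[Y]\leq K_0$ by \eqref{asHyp}, so that $\alpha_i\norm{x_i-u}\geq\varepsilon$ for some $i\geq n$ forces $Y\geq\varepsilon\alpha_n^{-1}=\varepsilon(n+2)/2$, whence Markov's inequality gives $\PP(\exists i\geq n\,((2+B)\alpha_i\norm{x_i-u}\geq\varepsilon))\leq \frac{1}{\varepsilon}\frac{cK_0}{n+2}$ for an explicit constant $c$. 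Adding the three contributions produces $\PP(\exists i\geq n\,(\norm{Ux_i-x_i}\geq\varepsilon))\leq\frac{1}{\varepsilon}\frac{4L}{n+2}$ for a suitable $L$ built from $K_0$, $K_2$, $B$ in the same way.

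I expect no real obstacle: the argument is essentially a layering of the earlier fast-rate results, and the only mildly delicate point is the treatment of $\alpha_n\norm{x_n-u}$ in the almost-sure estimate, where the decay of the step-sizes must compensate for the mere $L^1$-domination (rather than convergence) of $\norm{x_n-u}$. One could alternatively derive the bound on $\norm{Uy_n-y_n}$ by way of Lemma~\ref{lem:TMineq} together with Theorem~\ref{thm:fastY}, which is where the restriction to $n\in\mathbb{N}^*$ in the statement naturally appears.
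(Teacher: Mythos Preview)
Your argument is correct, but it follows a more direct route than the paper. The paper bounds $\norm{Uy_n-y_n}$ via Lemma~\ref{lem:TMineq}, obtaining
\[
\norm{Uy_{n}-y_{n}}\leq B\left(2\norm{y_{n-1}-y_{n}}+\alpha_{n}\norm{Uy_n-u}+\norm{\delta_{n-1}}\right)
\]
for $n\geq 1$, and then invokes the linear rate for $\norm{y_{n}-y_{n-1}}$ from Theorem~\ref{thm:fastY} (which itself is derived from Theorem~\ref{thm:fastX}); finally it combines this with $\norm{Ux_n-x_n}\leq 2\norm{x_{n+1}-x_n}+3\norm{Uy_n-y_n}+2\norm{\delta_n}$. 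You instead solve the defining relation $x_{n+1}=(1-\beta)(Uy_n+\delta_n)+\beta y_n$ explicitly for $Uy_n-y_n$ and feed the result straight into $\norm{Ux_n-x_n}\leq 2\norm{x_n-y_n}+\norm{Uy_n-y_n}$, so that everything reduces to $\norm{x_{n+1}-x_n}$, $\alpha_n\norm{x_n-u}$, and $\norm{\delta_n}$, with Theorem~\ref{thm:fastX} invoked only once. Your route is shorter, bypasses Theorem~\ref{thm:fastY} entirely, and in fact yields the conclusion for all $n\in\mathbb{N}$ rather than only $n\in\mathbb{N}^*$ (as you correctly note, the shift to $n\geq 1$ in the paper's statement arises from the appearance of $y_{n-1}$ in Lemma~\ref{lem:TMineq}). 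The paper's route has the advantage of exhibiting the intermediate bound on $\EE[\norm{Uy_n-y_n}]$ explicitly, which parallels the general structure of Section~3.2, but for the theorem as stated your direct approach is cleaner.
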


\subsection{Fast rates of asymptotic regularity relative to the mappings in the general case}

In the context of the above assumptions on the scalar sequences and the errors, we can still get rather sensible complexity estimates in the general case where neither mapping necessarily trivializes. While we could express this again using a general modulus of uniform convexity $\eta$ for the underlying space, we here focus on the case where $\eta$ is of \emph{power type $p$} for $p\geq 2$, i.e.\ there exists a constant $C$ such that $\eta(\varepsilon)=C\varepsilon^p$. Crucially, this is the case for $L^p$ spaces for general $p\in (1,\infty)$, and for arbitrary inner product spaces:

\begin{lemma}[essentially \cite{Cla1936}, see also \cite{KL2012b,LT1979}]\label{lem:ippPowerType}
If $X$ is an inner product space, then $X$ is uniformly convex with a corresponding modulus $\eta(\varepsilon)=\varepsilon^2/8$. Further, if $X=L^p$ for $p>1$, then $X$ is uniformly convex with a corresponding modulus 
\[
\eta(\varepsilon)=\begin{cases}\frac{p-1}{8}\varepsilon^2&\text{if }1<p<2,\\\frac{1}{p2^p}\varepsilon^p&\text{if }2\leq p<\infty.\end{cases}
\]
\end{lemma}

This allows for the following results on the asymptotic regularity in the general case. We begin by instantiating Theorem \ref{geometry:as} on the asymptotic regularity of the sequence $(y_n)$ relative to $U$ almost surely and Theorem \ref{geometry:alt} for deriving the respective regularity result in expectation.

\begin{lemma}\label{lem:fastUYgeneral}
Let $X$ be uniformly convex with a modulus $\eta$ of power type $p$ with constant $C$. Let $(x_n),(y_n)$ be the sequences generated by \eqref{sHM} for parameters as in \eqref{Par}. Let $K$ and $Y$ be as in Lemma \ref{lem:stochastic:bounds}. Also, assume that $\EE[\norm{\xi_n}]\leq K_1/(n+2)^2$ and $\EE[\norm{\delta_n}]\leq K_2/(n+2)^2$. Lastly, let $\Lambda>0$ be such that $\Lambda\leq\beta\leq 1-\Lambda$. Then $\norm{Uy_n-y_n}\to 0$ almost surely with rate
\[
\Gamma(\lambda,\varepsilon):=\ceil*{\frac{(3K)^{p-1} L}{C\Lambda^2\varepsilon^p\lambda^p}}
\]
for a suitable $L$ arising as an integer linear combination of $K$, $K_1$ and $K_2$. Given a modulus $\mu$ of uniform integrability for $(\norm{Uy_n-y_n})$, we further get $\EE[\norm{Uy_n-y_n}]\to 0$ with rate
\[
\Gamma'(\varepsilon):=\ceil*{\frac{2^p(3K)^{p-1} L}{C\Lambda^2\varepsilon^p\mu(\varepsilon/4)^p}}
\]
where $L$ is as above.
\end{lemma}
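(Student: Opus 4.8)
The plan is to specialise the two abstract geometric results, Theorem~\ref{geometry:as} (for the almost sure statement) and Theorem~\ref{geometry:alt}/Lemma~\ref{as:ui:mean} (for the mean statement), to the parameter choice \eqref{Par} and the polynomial decay of the noise assumed here, feeding in the explicit linear rates obtained above for the discrete velocity, and then to collapse the resulting composite moduli using the power-type assumption $\eta(\varepsilon)=C\varepsilon^p$.

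First I would assemble the four moduli that Theorem~\ref{geometry:as} consumes. Since the common fixed point hypothesis behind Lemma~\ref{lem:stochastic:bounds} validates \eqref{Hyp} with the constant there taken to be $K$ (every relevant expectation being dominated by $\EE[Y]\leq K$), Theorem~\ref{thm:fastX} applies and gives $\PP(\exists i\geq n\,(\norm{x_i-x_{i+1}}\geq\varepsilon))\leq\frac{1}{\varepsilon}\frac{4\widetilde L}{n+2}$ with $\widetilde L$ an integer linear combination of $K,K_1,K_2$, hence a rate $\Delta(\lambda,\varepsilon):=\ceil*{4\widetilde L/(\lambda\varepsilon)}$ for $\norm{x_{n+1}-x_n}\to 0$ almost surely. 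Since $\alpha_n=2/(n+2)$ one may take $\rho(\varepsilon):=\ceil*{2/\varepsilon}$, and the computation of Remark~\ref{rem:sumToAS} yields $\sum_{n=N}^\infty\EE[\norm{\xi_n}]\leq K_1/(N+1)$ and likewise for $\delta_n$, so $\chi_1(\varepsilon):=\ceil*{K_1/\varepsilon}$ and $\chi_2(\varepsilon):=\ceil*{K_2/\varepsilon}$ serve as rates of convergence for the two error series.

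The decisive step is the choice of $\widehat\varepsilon$. Substituting $\eta(\varepsilon)=C\varepsilon^p$ directly into $\widehat\varepsilon=\varepsilon\Lambda^2\eta(\varepsilon/K')$ would introduce a factor $\varepsilon^{p+1}$ and $1/(K')^p$, one power too many in both $\varepsilon$ and $\lambda$. To recover the stated exponents I would instead invoke Remark~\ref{rem:arUYOpt}: writing $\eta(\varepsilon)=\varepsilon\cdot\widetilde\eta(\varepsilon)$ with $\widetilde\eta(\varepsilon):=C\varepsilon^{p-1}$, which is increasing for $p\geq 2$, the rate of Theorem~\ref{geometry:as} holds with the improved value $\widehat\varepsilon=\varepsilon\Lambda^2\widetilde\eta(\varepsilon/K')=C\Lambda^2\varepsilon^p\lambda^{p-1}/(3K)^{p-1}$, recalling $K'=3K/\lambda$. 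Feeding this $\widehat\varepsilon$ and the four moduli above into $\Gamma(\lambda,\varepsilon)=\max\{\Delta(\lambda/9,\widehat\varepsilon/4),\rho(\widehat\varepsilon/4K'),\chi_1(\lambda\widehat\varepsilon/36),\chi_2(\lambda\widehat\varepsilon/36)\}$, each of the four entries becomes a ceiling of a constant (respectively $144\widetilde L$, $24K$, $36K_1$, $36K_2$) times $(3K)^{p-1}/(C\Lambda^2\varepsilon^p\lambda^p)$; absorbing these prefactors into a single integer linear combination $L$ of $K,K_1,K_2$ gives the claimed $\Gamma$.

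For the mean rate I would post-compose with Lemma~\ref{as:ui:mean}: given the modulus of uniform integrability $\mu$ for $(\norm{Uy_n-y_n})$, that lemma converts the almost sure rate $\Gamma$ into the rate $\Gamma(\mu(\varepsilon/4),\varepsilon/2)$ for $\EE[\norm{Uy_n-y_n}]\to 0$, and substituting $\lambda=\mu(\varepsilon/4)$ and $\varepsilon/2$ for $\varepsilon$ in the formula for $\Gamma$ produces exactly $\ceil*{2^p(3K)^{p-1}L/(C\Lambda^2\varepsilon^p\mu(\varepsilon/4)^p)}$; the same conclusion can alternatively be obtained from Theorem~\ref{geometry:alt} together with Remark~\ref{rem:arUYOptExp}. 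All the calculations involved are routine; the one genuine subtlety --- the single point where a naive argument would lose a power of $\varepsilon$ and of $\lambda$ --- is the use of the power-type optimisation of Remark~\ref{rem:arUYOpt}/\ref{rem:arUYOptExp} in place of the raw definition of $\widehat\varepsilon$.
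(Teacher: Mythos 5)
Your proposal is correct and follows essentially the same route as the paper's own proof: invoking Remark~\ref{rem:arUYOpt} with $\tilde\eta(\varepsilon)=C\varepsilon^{p-1}$ to get the improved $\widehat\varepsilon$, instantiating Theorem~\ref{geometry:as} with the moduli $\Delta$, $\rho$, $\chi_1$, $\chi_2$ from Theorem~\ref{thm:fastX} and Remark~\ref{rem:sumToAS}, computing the same four prefactors $144L_0$, $24K$, $36K_1$, $36K_2$, and then post-composing with Lemma~\ref{as:ui:mean} for the mean rate. You also correctly identified the one genuine subtlety (that naively substituting $\eta(\varepsilon)=C\varepsilon^p$ into $\widehat\varepsilon$ would lose a power of both $\varepsilon$ and $\lambda$), which is exactly the point the paper emphasises.
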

\begin{proof}
First note that in the context of moduli $\eta$ of power type $p\geq 2$, we are actually in the setting of the previous Remarks \ref{rem:arUYOpt} and \ref{rem:arUYOptExp} where $\tilde\eta(\varepsilon)=C\varepsilon^{p-1}$. Then the rate for $\norm{Uy_n-y_n}\to 0$ almost surely follows by instantiating the rate given in Theorem \ref{geometry:as} with the following moduli: With the above $\tilde\eta$, we have $\widehat{\varepsilon}:=C\Lambda^2\varepsilon^{p}\lambda^{p-1}/(3K)^{p-1}$ and $K':=3K/\lambda$.
Using Theorem \ref{thm:fastX}, we have $\PP\left(\exists i\geq n\left( \norm{x_i-x_{i+1}}\geq\varepsilon\right)\right)\leq \frac{1}{\varepsilon}\frac{4L_0}{n+2}$ for all $n\in\NN$ and a suitable constant $L_0$ arising as an integer linear combination of $K$, $K_1$ and $K_2$. So we in particular have that $\Delta(\lambda,\varepsilon)=\ceil*{4L_0/\varepsilon\lambda}$ is a corresponding rate for $\norm{x_n-x_{n+1}}\to 0$ almost surely. As $\alpha=2/(n+2)$, we further have rather immediately that $\rho(\varepsilon)=\ceil*{2/\varepsilon}$ is a corresponding rate for $\alpha_n\to 0$. Lastly, using the assumptions on $\norm{\xi_n}$ and $\norm{\delta_n}$, note that as in Remark \ref{rem:sumToAS} we have $\sum_{n=N}^\infty\EE[\norm{\xi_n}]\leq K_1/(N+1)$ so that $\chi_1(\varepsilon)=\ceil*{K_1/\varepsilon}$ is a corresponding rate of convergence for $\sum_{n=0}^\infty \EE[\norm{\xi_n}]<\infty$. The rate of convergence $\chi_2(\varepsilon)=\ceil*{K_2/\varepsilon}$ for $\sum_{n=0}^\infty \EE[\norm{\delta_n}]<\infty$ follows similarly. Then instantiating Theorem \ref{geometry:as} under Remark \ref{rem:arUYOpt} gives us the rate
\[
\max\left\{\ceil*{\frac{144L_0(3K)^{p-1}}{C\Lambda^2\varepsilon^{p}\lambda^{p}}},\ceil*{\frac{24K(3K)^{p-1}}{C\Lambda^2\varepsilon^{p}\lambda^{p}}},\ceil*{\frac{36K_1(3K)^{p-1}}{C\Lambda^2\varepsilon^{p}\lambda^{p}}},\ceil*{\frac{36K_2(3K)^{p-1}}{C\Lambda^2\varepsilon^{p}\lambda^{p}}}\right\}\leq \frac{(3K)^{p-1}L}{C\Lambda^2\varepsilon^p\lambda^p}
\]
for $L:=144L_0$, noting that $K,K_1,K_2\leq L_0$ and so the first part follows. For the second part, we just apply Lemma \ref{as:ui:mean} directly.
\end{proof}

Using that, we can then employ the previous Theorems \ref{res:other:ar} and \ref{res:other:ar:as} to derive rates of asymptotic regularity also for the sequence $(x_n)$ relative to the mappings $U$ and $T$. For simplicity, we now focus on inner product spaces, i.e.\ where $p=2$ and $C=1/8$ by Lemma \ref{lem:ippPowerType}.

\begin{theorem}\label{thm:fastGeneral}
Let $X$ be an inner product space. Let $(x_n),(y_n)$ be the sequences generated by \eqref{sHM} for parameters as in \eqref{Par}. Let $K$ and $Y$ be as in Lemma \ref{lem:stochastic:bounds}. Also, assume that $\EE[\norm{\xi_n}]\leq K_1/(n+2)^2$ and $\EE[\norm{\delta_n}]\leq K_2/(n+2)^2$. Lastly, let $\Lambda>0$ be such that $\Lambda\leq\beta\leq 1-\Lambda$. Then $\norm{Ux_n-x_n}\to 0$ and $\norm{Tx_n-x_n}\to 0$ almost surely with rates
\[
\Phi_1(\lambda,\varepsilon):=\ceil*{\frac{24KL}{\Lambda^2\varepsilon^2\lambda^2}}\text{ and }\Phi_2(\lambda,\varepsilon):=\ceil*{\frac{72KL}{\Lambda^2\varepsilon^2\lambda^2}},
\]
respectively, where $L$ is as in Lemma \ref{lem:fastUYgeneral}. Given a modulus $\mu$ of uniform integrability for $(\norm{Uy_n-y_n})$, we further get $\EE[\norm{Ux_n-x_n}]\to 0$ and $\EE[\norm{Tx_n-x_n}]\to 0$ with respective rates
\[
\varphi_1(\varepsilon):=\ceil*{\frac{96KL}{\Lambda^2\varepsilon^2\mu(\varepsilon/4)^2}} \text{ and }\varphi_2(\varepsilon):=\ceil*{\frac{288KL}{\Lambda^2\varepsilon^2\mu(\varepsilon/4)^2}}.
\]
\end{theorem}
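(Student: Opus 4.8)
The plan is to route the displacements relative to $T$ and $U$ through the sequence $(y_n)$ and reduce everything to Lemma \ref{lem:fastUYgeneral}. Specialising that lemma to inner product spaces via Lemma \ref{lem:ippPowerType} (so $p=2$ and $C=1/8$) gives, for the parameters \eqref{Par}, that $\norm{Uy_n-y_n}\to 0$ almost surely with rate $\Gamma(\lambda,\varepsilon)=\ceil*{24KL/(\Lambda^2\varepsilon^2\lambda^2)}$ and, under the uniform integrability hypothesis, that $\EE[\norm{Uy_n-y_n}]\to 0$ with rate $\Gamma'(\varepsilon)=\ceil*{96KL/(\Lambda^2\varepsilon^2\mu(\varepsilon/4)^2)}$, where $L$ is the integer linear combination of $K$, $K_1$, $K_2$ appearing there. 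These will be the dominant contributions.

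First I would recall, from the pointwise inequalities collected in the proof of Theorem \ref{res:other:ar}, that on $\Omega$ one has $\norm{x_n-y_n}\leq \norm{x_{n+1}-x_n}+\norm{Uy_n-y_n}+\norm{\delta_n}$, and hence
\[
\norm{Ux_n-x_n}\leq 2\norm{x_{n+1}-x_n}+3\norm{Uy_n-y_n}+2\norm{\delta_n},
\]
while feeding in the bound on $\norm{Ty_n-y_n}$ yields
\[
\norm{Tx_n-x_n}\leq 3\norm{x_{n+1}-x_n}+3\norm{Uy_n-y_n}+3\norm{\delta_n}+\alpha_n\norm{Tx_n-u}+\norm{\xi_n}.
\]
For the almost sure rates I would then argue exactly as in the proof of Theorem \ref{res:other:ar:as}: split the event $\{\norm{Ux_n-x_n}\geq\varepsilon\}$ (respectively $\{\norm{Tx_n-x_n}\geq\varepsilon\}$) over the summands by the Fréchet/union bound and control each piece separately. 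The $\norm{x_{n+1}-x_n}$-piece is handled by the linear tail bound of Theorem \ref{thm:fastX}; the $\norm{\delta_n}$- and $\norm{\xi_n}$-pieces by Remark \ref{rem:sumToAS}, which converts $\EE[\norm{\delta_n}],\EE[\norm{\xi_n}]=O(1/n^2)$ into linear tail bounds; the $\alpha_n\norm{Tx_n-u}$-piece by the Markov argument of Theorems \ref{res:other:ar:as}(b) and \ref{thm:fastY}, using $\norm{Tx_n-u}\leq Y$ and $\EE[Y]\leq K$ from Lemma \ref{lem:stochastic:bounds}, which again gives a linear tail bound; and the $\norm{Uy_n-y_n}$-piece by $\Gamma$. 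Since every contribution except the last is of order $1/(\varepsilon\lambda\,n)$ whereas $\Gamma$ forces $n$ of order $1/(\varepsilon^2\lambda^2)$, the $\Gamma$-term is the bottleneck, and after distributing $\varepsilon$ and $\lambda$ over the three (respectively five) summands and absorbing the numerical factors into $L$ one reads off $\Phi_1$ and $\Phi_2$ (with $\Phi_2=3\Phi_1$ mirroring the coefficient $3$ of $\norm{Uy_n-y_n}$). The in-expectation rates are more direct still: take expectations in the two displayed inequalities and substitute $\EE[\norm{x_{n+1}-x_n}]\leq 2L_0/(n+2)$ from Theorem \ref{thm:fastX}, $\EE[\norm{\delta_n}],\EE[\norm{\xi_n}]=O(1/n^2)$, $\alpha_n\EE[\norm{Tx_n-u}]\leq 2K/(n+2)$, and $\Gamma'$ for $\EE[\norm{Uy_n-y_n}]$; the $\Gamma'$-term again dominates and gives $\varphi_1,\varphi_2$.

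The delicate, though essentially routine, point will be the constant bookkeeping: one must check that when $\varepsilon$ and $\lambda$ are split across the summands and $\Gamma$ (respectively $\Gamma'$) is re-evaluated at the reduced arguments, the resulting blow-up in numerical factors is genuinely absorbed into the constant $L$ from Lemma \ref{lem:fastUYgeneral}, so that the clean closed forms $\ceil*{24KL/(\Lambda^2\varepsilon^2\lambda^2)}$ and $\ceil*{96KL/(\Lambda^2\varepsilon^2\mu(\varepsilon/4)^2)}$ (and their threefold versions) remain valid. No genuinely new idea is needed beyond the techniques already developed in Sections 3 and 4; the entire content lies in assembling them and confirming that the asymptotic regularity of $(y_n)$ relative to $U$ is the slowest of the ingredients.
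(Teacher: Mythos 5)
Your proposal follows the paper's own route exactly: the paper's proof is a one-line instantiation of Theorems \ref{res:other:ar} and \ref{res:other:ar:as} with the linear rates from Theorem \ref{thm:fastX} and the quadratic rates from Lemma \ref{lem:fastUYgeneral} (specialised to inner product spaces via Lemma \ref{lem:ippPowerType}), observing that the latter dominate. One small quibble: the claim that $\Phi_2=3\Phi_1$ ``mirrors the coefficient $3$ of $\norm{Uy_n-y_n}$'' is not quite the right explanation, since that coefficient is $3$ in both of your displayed bounds; the extra factor in $\Phi_2$ versus $\Phi_1$ reflects the additional nesting in part (d) of Theorem \ref{res:other:ar:as} (through $\psi_2$) compared with part (c), but this is a bookkeeping detail that doesn't affect the correctness of the approach.
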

\begin{proof}
The rates follow immediately by instantiating Theorems \ref{res:other:ar} and \ref{res:other:ar:as} with the rates obtained from Theorem \ref{thm:fastX} and Lemma \ref{lem:fastUYgeneral}, noting in particular that the quadratic rates for $\norm{Uy_n-y_n}\to 0$ and $\EE[\norm{Uy_n-y_n}]\to 0$ dominate.
\end{proof}

\begin{remark}\label{rem:fastRateGen}
Compared to the previous nonasymptotic guarantees for the special cases of \eqref{sH} and \eqref{sKMT}, the fast rates given for the general schema \eqref{sHM} given above might look slightly complex at first sight. However, note that the rates from Theorem \ref{thm:fastGeneral} can be simply bounded as 
\[
\Phi_1(\lambda,\varepsilon),\Phi_2(\lambda,\varepsilon)\leq \frac{R}{\varepsilon^2\lambda^2}+1\in \mathcal{O}(\varepsilon^{-2}\lambda^{-2})
\]
and
\[
\varphi_1(\varepsilon),\varphi_2(\varepsilon)\leq \frac{R}{\varepsilon^2\mu(\varepsilon/4)^2}+1\in \mathcal{O}(\varepsilon^{-2}\mu(\varepsilon/4)^{-2}),
\]
where, in each case, $R$ is a suitably large constant and $\mu$ is still a modulus of uniform integrability for $(\norm{Uy_n-y_n})$.
\end{remark}

\section{An outlook onto applications}\label{sec:outlook}

Our stochastic Halpern-Mann scheme \eqref{sHM} and the subsequent convergence analysis have been presented in a completely abstract way. While will consider this level of abstraction to be a virtue, in that it potentially encompasses many different scenarios and shows the interrelations of the different quantitative ingredients in the final rates, we nevertheless conclude with an informal discussion on how various aspects of our analysis can be both extended to encompass alternative notions of complexity and interpreted in a concrete way. We however emphasise that a more detailed study of the various applications of both \eqref{sKMT} and \eqref{sHM} will be provided in a forthcoming paper, so that our comments remain at the level of an extended sketch.

\subsection{Oracle complexity and managing variance}
\label{sec:complexity}

Recall that the intuition behind our abstract stochastic scheme is that $\xi_n$ and $\delta_n$ arise by evaluating stochastic oracles $\tilde T$ and $\tilde U$ for nonexpansive mappings $T$ and $U$ respectively. Though our primary focus has been on establishing direct convergence rates for our schemes, in concrete applications it is the resulting \emph{oracle complexity} that may act as a more reliable estimate of the actual cost of running the algorithm. To address this at a level of generality in line with our overall approach, we propose an abstract characterisation of oracle complexity: We first introduce two sequences of natural numbers ($\#\xi_n$) and ($\#\delta_n$), with the intuition that $\#\xi_n$ denotes the number of calls made to the stochastic oracle $\tilde T$ in computing the approximation to $Tx_n$ (with output $Tx_n+\xi_n$), and $\#\delta_n$ the number of calls to $\tilde U$ in computing the approximation to $Uy_n$. With this intuition in mind, we say that $\psi:\NN\to\NN$ is a bound on the oracle growth of the general scheme (\ref{sHM}) if
\[
\forall N\in\NN\, \left(\#\xi_N+\sum_{n=0}^{N-1} (\#\xi_n+\#\delta_n)\leq \psi(N)\right),
\]
so that the total number of oracle calls needed to compute $x_N$ and $y_N$ is bounded by $\psi(N)$. Note that the additional term $\#\xi_N$ occurs as we also calculate $y_N$, which requires us to evaluate $Tx_N+\xi_N$ as per the schema \eqref{sHM}.

Then, for example, if 
\[
\text{$\EE[\norm{Tx_n-x_n}]\to 0$ with rate $\varphi$,}
\]
it follows that for any $\varepsilon>0$, we can compute an approximant $x_N$ such that
\[
\text{$\EE[\norm{Tx_N-x_N}]<\varepsilon$ using at most $(\psi\circ \varphi)(\varepsilon)$ queries to the stochastic oracles.}
\]
Naturally, $N=\varphi(\varepsilon)$ suffices here, and this $N$ further has the property that $\EE[\norm{Tx_n-x_n}]<\varepsilon$ for all $n\geq N$. Indeed, the bounding function $\psi$ can be used in a similar way to convert any of our convergence results to a corresponding characterisation of the overall oracle complexity in a completely general manner, which can then be appropriately instantiated in specific applications. This in particular applies to our almost sure convergence results, but here we focus solely on convergence in mean.

In practice, there are a number of techniques that would allow us to approximate the mappings $T,U$ while at the same time achieving the necessary variance bounds needed to guarantee convergence within our framework. Minibatching is an obvious example: Here we assume that we have access to $T$ and $U$ via a pair of stochastic oracles $\tilde{T},\tilde{U}:X\times \Theta\to X$, defined over some suitable measure space $\Theta$. These oracles then give rise to a concrete minibatched Halpern-Mann iteration via
\[
\begin{cases}
y_n:=(1-\alpha_n)\frac{1}{k_n}\sum_{j=1}^{k_n}\tilde{T}(x_n,\zeta_{n,j})+\alpha_nu,\\
x_{n+1}:=(1-\beta_n)\frac{1}{l_n}\sum_{j=1}^{l_n}\tilde{U}(y_n,\zeta'_{n,j})+\beta_ny_n,
\end{cases}\tag{sHM-mini}\label{sHMm}
\]
where $(k_n)$ and $(l_n)$ are sequences of batchsizes, and for each $n\in\NN$, $\zeta_{n,1},\ldots,\zeta_{n,k_n}$ are independent samples drawn from some distribution $D_n$ over $\Theta$, and $\zeta'_{n,1},\ldots,\zeta'_{n,l_n}$ independent samples drawn from some distribution $D'_n$. We can recognize this as an instance of \eqref{sHM} by defining the corresponding errors $\xi_n:=\frac{1}{k_n}\sum_{j=1}^{k_n}\tilde{T}(x_n,\zeta_{n,j})-Tx_n$, and analogously for $\delta_n$. In particular, the resulting number of oracle queries are in this case simply given by the bath sizes, i.e.\ we have $\#\xi_n:=k_n$ and $\#\delta_n:=l_n$. 
 
Now, suppose that we impose some standard assumptions on our oracle $\tilde{T}$:\footnote{These variance assumptions on the oracles are indeed widely used, featuring in a number of recent works on stochastic Halpern schemes (see in particular \cite{BC2024,Caietal2022}).}
\begin{enumerate}
\item $\tilde{T}$ is unbiased, i.e.\ $\EE[\tilde{T}(x_n,\zeta_{n,1})]=Tx_n$ for all $n\in\NN$,
\item $\tilde{T}$ has controlled variance w.r.t.\ $x_n$, i.e.\ $\EE[\lVert\tilde{T}(x_n,\zeta_{n,1})-Tx_n\rVert^2]\leq \sigma$ for all $n\in\NN$ and some constant $\sigma$.
\end{enumerate}
Then whenever $X$ is a separable Hilbert space it immediately follows (using Jensen's inequality and sample independence) that $\EE[\norm{\xi_n}]^2\leq \sigma^2/k_n$. Further, if $X=(\RR^d,\norm{\cdot})$ for some (not necessarily Euclidean) norm $\norm{\cdot}$, we have $\EE[\norm{\xi_n}]^2\leq c^2\sigma^2/k_n$ where $c>0$ is such that $\norm{x}\leq c\norm{x}_E$ for all $x\in \RR^d$ (writing $\norm{x}_E$ for the Euclidean norm on $\RR^d$). In that case, we then obtain
\[
\sum_{n=0}^\infty \EE[\norm{\xi_n}]\leq \sum_{n=0}^\infty \frac{c\sigma}{\sqrt{k_n}},
\]
for some $c>0$, and so the variance bounds on $(\norm{\xi_n})$ that we require for convergence of $\eqref{sHM}$ can be ensured by choosing the batchsizes $(k_n)$ appropriately. Assuming analogous properties of the oracle $\tilde{U}$, the corresponding variance bounds for $(\norm{\delta_n})$ are achieved similarly through appropriate choices for $(l_n)$. We now discuss one such choice in the following example for the instantiation \eqref{sH}:

\begin{example}\label{ex:minibatch}
Consider the special case of the stochastic Halpern iteration \eqref{sH}, obtained from \eqref{sHM} by setting $U:=\mathrm{Id}$ and $\delta_n:=0$. The corresponding minibatched scheme \eqref{sHMm} in that case to
\[
x_{n+1}:=(1-\alpha_n)\frac{1}{k_n}\sum_{j=1}^{k_n}\tilde{T}(x_n,\zeta_{n,j})+\alpha_nu,\tag{sH-mini}\label{sHm}
\]
as previously considered in \cite{BC2024}. If $X$ is finite dimensional, then we have $\EE[\norm{\xi_n}]\leq c\sigma/\sqrt{k_n}$ for some constant $c>0$ as outlined above. Choosing $k_n=(n+1)^4$ hence yields $\EE[\norm{\xi_n}]\leq K_1/(n+2)^2$ for $K_1:=4c\sigma$. Under the additional conditions of Theorem \ref{thm:fastHalpern} (recalling that \eqref{Hyp} is satisfied whenever we assume $\mathrm{Fix}T\neq \emptyset$) we would have
\[
\EE[\norm{Tx_n-x_n}]\leq \frac{2L}{n+2}
\]
for $L$ as defined in Theorem \ref{thm:fastHalpern}, corresponding to a rate of convergence $\varphi(\varepsilon):=\varepsilon/2L$. For this choice of $(k_n)$, a bound on the oracle growth is given by $\psi(N)=N^5$, and therefore we can compute an $x_N$ such that
\[
\text{$\EE[\norm{Tx_N-x_N}]<\varepsilon$ using at most $\left(\varepsilon/2L\right)^5$ queries to the stochastic oracles}.
\]
In this way we obtain a version of \cite[Corollary 3.5]{BC2024} for linear rates \emph{without} logarithmic factors, and this represents just an extremely simple case within our overall framework. Such complexity results extend to the instantiation \eqref{sKMT} and a corresponding minibatched variant as will be discussed in detail in a forthcoming paper.
\end{example}
We conjecture that other methods of controlling variance beyond minibatching could also be expressed by instantiating $\xi_n,\delta_n$ in a suitable way, which would come with alternative characterisations of oracle complexity. We anticipate that, in general, obtaining good bounds on the oracle complexity for instances of (\ref{sHM}) will involve a tension between variance control and oracle growth, which in the simple case of minibatching corresponds, as detailed above, to a payoff between
\begin{equation*}
\text{convergence speed of $\sigma/\sqrt{k_n}\to 0$ and growth of $\sum_{n=0}^N k_n$}.
\end{equation*}
Because our explicit rates of convergence make completely precise how improved control on the variance (through e.g.\ faster convergence rates for $\sum_{n=0}^\infty \EE[\norm{\xi_n}]$ and $\sum_{n=0}^\infty\EE[\norm{\delta_n}]$) leads to improved rates of convergence for the algorithm itself, we anticipate that in any given model that allows us to explicitly describe $\psi$ in terms of $\EE[\norm{\xi_n}]$ and $\EE[\norm{\delta_n}]$, our abstract quantitative results would not only provide us with a general bound on the associated oracle complexity in terms of parameters representing variance control, but might even provide insights into specific choices of parameters that optimize oracle complexity.

\subsection{Applications of \eqref{sHM}}

Our new stochastic iteration scheme (\ref{sHM}) (along with the associated convergence results) immediately leads to new algorithms which solve problems that can be formulated in terms of fixed points of nonexpansive mappings. Two immediate examples here are monotone inclusion problems (where we could potentially generalise the results of \cite{Caietal2022} which are based on a stochastic Halpern scheme), and splitting methods, where this time our novel stochastic Krasnoselskii-Mann iteration with Tikhonov regularization terms is a direct generalisation of the nonstochastic method utilised for this purpose in \cite{BCM2019}, and so could be used to, for instance, compute zeroes of sums of maximally monotone operators that can only be accessed in a noisy way. Perhaps the most interesting application of our scheme lies in model-free reinforcement learning, where algorithms such as \emph{$Q$-learning} \cite{Wat1989} are naturally formulated as noisy methods for computing fixpoints. Only very recently has the special case \eqref{sH} of the scheme (\ref{sHM}) corresponding to Halpern's iteration been instantiated as a form of $Q$-learning \cite{BC2024}, and we in that way also perceive our general method as an expanded class of learning algorithms.

To illustrate this latter application in a little more detail, suppose that $(\mathcal{S},\mathcal{A},r,p)$ forms a Markov decision process (MDP) over some finite set of states $\mathcal{S}$ and actions $\mathcal{A}$, where if we choose action $a$ in state $s$, $r(s,a)$ represents an immediate reward and $p(s,a,t)$ the probability that we transition to state $t$ (see e.g.\ \cite{Put2014} for a standard reference). Instantiating not \eqref{sH} but \eqref{sKMT} in the style of $Q$-learning results in the method
\begin{equation}
Q_{n+1}(s,a):=(1-\beta_n)\left(U\left(\gamma_nQ_n\right)(s,a)+\delta_n(s,a)\right)+\beta_n \left(\gamma_nQ_n(s,a)\right) \label{KMTQ}\tag{KM-T-Q},
\end{equation}
where $U$ is some suitable nonexpansive operator on $\mathbb{R}^{\mathcal{S}\times\mathcal{A}}$ with respect to
\[
\norm{Q}_\infty:=\max_{s\in\mathcal{S}}\max_{a\in\mathcal{A}}Q(s,a)
\]
that captures the underlying Bellman equation $UQ=Q$, which in the case of discounted $Q$-learning would be given by
\[
UQ(s,a):=r(s,a)+c\sum_{t\in S}p(s,a,t)\max_{b\in A}Q(t,b)
\]
for some discount factor $c\in [0,1)$ (so that $U$ in this case is even a strict contraction). The method \eqref{KMTQ}, which can be perceived as an extension of ordinary $Q$-learning with Tikhonov regularization terms, is to the best of our knowledge already novel, even for the discounted case, though we consider it primarily of interest in the cases where $U$ is properly nonexpansive. Notably, we claim that under suitable assumptions it could be used to compute optimal policies for MDPs in the \emph{averaged reward} setting, as will be explored in a forthcoming work, just as Halpern's scheme has been recently utilised to this end in \cite{BC2024} (under a minibatch strategy). The key aspect of those variants in that context, already motivating \cite{BC2024}, is that they stay computationally effective over the averaged reward setting, as the iterations allow for fast asymptotic behavior even in the presence of general nonexpansive maps, and the convergence results presented here would allow us to produce similar guarantees, which could then be translated to sample complexity bounds by introducing oracle complexity terms as discussed in Section \ref{sec:complexity}. Concrete instantiations of the noise terms via minibatching are also possible in the context of reinforcement learning, in that they allow for sufficient variance control through suitable choices of batchsizes (see \cite{BC2024}), however the precise details are quite technical and we do not discuss them further here. In particular, this requires a more subtle version of variance control than that outlined in the previous section, where the associated oracle no longer has uniformly bounded variance in the sense of (2).

Ultimately, we envisage a broader use of our full framework in the context of reinforcement learning. Here our level of generality would allow us to consider $Q$-learning on a more abstract level, where we could take as a starting point the generalised model of \cite{LS1996} and explore both different iterative methods and different forms of variance reduction, using the results given here to formulate abstract convergence theorems for learning algorithms. In particular we could consider \emph{alternating} $Q$-learning algorithms based our main scheme (\ref{sHM}), for example
\begin{equation}
\begin{aligned}
Q'_n(s,a):=(1-\alpha_n)\left(TQ_n(s,a)+\xi_n(s,a)\right)+\alpha_n\hat{Q}(s,a)\\
Q_{n+1}(s,a):=(1-\beta_n)\left(UQ'_n(s,a)+\delta_n(s,a)\right)+\beta_nQ'_n(s,a)
\end{aligned}\label{sHMQ}\tag{sHM-Q}
\end{equation}
where $(\mathcal{S},\mathcal{A},r',p')$ is a second MDP over the same states and actions giving rise to the respective Bellman-type operator $T$. This iteration thus combines the traditional $Q$-learning procedure with a Halpern variant. This method bears a passing resemblance to double $Q$-learning \cite{Hass2010}, which incorporates a double estimator to reduce bias in ordinary $Q$-learning. Moreover, our scheme converges to a simultaneous fixpoint of two underlying nonexpansive operators, and thus might be relevant in situations where we are required to compute optimal policies concurrently across distinct environments. However, we leave an exploration of the potential merits of (\ref{sHMQ}) and similar algorithms to future work. In particular, a proper study of scenarios in which they could be exploited, together with an assessment of their performance against state-of-the-art reinforcement learning algorithms, would require substantial empirical work that is beyond the scope of the present paper.

\bibliographystyle{plain}
\bibliography{ref}

\section*{Appendix}

\begin{proof}[Proof of Theorem \ref{res:other:ar}]
Related to (a) -- (d), we can immediately establish the following inequalities:
\begin{align*}
\norm{x_n-y_n}&\leq \norm{x_{n+1}-x_n}+\norm{x_{n+1}-y_n}\\
&\leq \norm{x_{n+1}-x_n}+\norm{Uy_n-y_n}+\norm{\delta_n},\\
\norm{Ty_n-y_n}&\leq \norm{Ty_n-Tx_n}+\norm{Tx_n-y_n}\\
&\leq \norm{y_n-x_n}+\alpha_n\norm{Tx_n-u}+\norm{\xi_n},\\
\norm{Ux_n-x_n}&\leq \norm{Ux_n-Uy_n}+\norm{Uy_n-y_n}+\norm{y_n-x_n}\\
&\leq 2\norm{x_n-y_n}+\norm{Uy_n-y_n},\\
\norm{Tx_n-x_n}&\leq \norm{Tx_n-Ty_n}+\norm{Ty_n-y_n}+\norm{y_n-x_n}\\
&\leq 2\norm{x_n-y_n}+\norm{Ty_n-y_n}.
\end{align*}
By taking the expectation, the rates immediately follow.
\end{proof}

\begin{proof}[Proof of Theorem \ref{res:other:ar:as}]
The results follow immediately by the same inequalities established in the proof of Theorem \ref{res:other:ar} where in the case (b) one just needs the following additional consideration, giving a rate for $\alpha_n\norm{Tx_n-u}\to 0$ almost surely: Using Markov's inequality we have
\[
\PP(\exists n\left(\norm{Tx_n-u}\geq K_0/\lambda\right))\leq\PP(Y\geq K_0/\lambda)\leq \lambda
\]
for $Y$ as in \eqref{asHyp}. Now noting that if $\omega$ is such that $\norm{Tx_n(\omega)-u(\omega)}< K_0/\lambda$ for all $n\in\mathbb{N}$, then $\alpha_n\norm{Tx_n(\omega)-u(\omega)}<\varepsilon$ for any $n\geq \rho(\varepsilon\lambda/K_0)$, and therefore we have
\[
\PP(\exists n\geq \rho(\varepsilon\lambda/K_0)(\alpha_n\norm{Tx_n-u}\geq \varepsilon))\leq\PP(\exists n\left(\norm{Tx_n-u}\geq K_0/\lambda\right))\leq \lambda
\]
for any $\varepsilon,\lambda>0$ and this suffices to establish the claim in this case.
\end{proof}

\begin{proof}[Proof of Lemma \ref{lem:TMineq}]
Given $n\in\mathbb{N}$, we have
\begin{align*}
\norm{Uy_{n+1}-y_{n+1}}&=\norm{Uy_{n+1}-(1-\alpha_{n+1})x_{n+1}-\alpha_{n+1}u}\\
&\leq \norm{Uy_{n+1}-x_{n+1}}+\alpha_{n+1}\norm{Uy_{n+1}-u}\\
&\leq \norm{Uy_{n+1}-Uy_n}+\beta_n\norm{Uy_{n+1}-y_n}+\norm{\delta_n}+\alpha_{n+1}\norm{Uy_{n+1}-u}\\
&\leq 2\norm{y_{n+1}-y_n}+\beta_n\norm{Uy_{n+1}-y_{n+1}}+\norm{\delta_n}+\alpha_{n+1}\norm{Uy_{n+1}-u}
\end{align*}
pointwise everywhere.
\end{proof}

\begin{proof}[Proof of Theorem \ref{thm:UAsRegKMT}]
By the Lemma \ref{lem:TMineq} above, we have
\[
(1-\beta_n)\norm{Uy_{n+1}-y_{n+1}}\leq 2\norm{y_n-y_{n+1}}+\alpha_{n+1}\norm{Uy_{n+1}-u}+\norm{\delta_n}
\]
pointwise everywhere for any $n\in\mathbb{N}$. After taking expectations, we have
\[
\EE[\norm{Uy_{n+1}-y_{n+1}}]\leq \frac{1}{\Lambda}\left(2\EE[\norm{y_n-y_{n+1}}]+\alpha_{n+1}K_0+\EE[\norm{\delta_n}]\right)
\]
and from that the first rate immediately follows. The second rate follows rather similarly from the above relation: Using Markov's inequality, it holds that 
\[
\PP\left(\exists n \left(\norm{Uy_{n+1}-u}\geq\frac{K_0}{\lambda}\right)\right)\leq \PP\left(Y\geq\frac{K_0}{\lambda}\right)\leq\lambda
\]
for all $\lambda>0$. Let now $\lambda,\varepsilon>0$ be given. Take then $\omega$ such that $\norm{y_n(\omega)-y_{n+1}(\omega)}<\Lambda\varepsilon/4$ for all $n\geq \psi(\lambda/3,\Lambda\varepsilon/4)$ and $\norm{\delta_n(\omega)}<\Lambda\varepsilon/4$ for all $n\geq \phi(\lambda/3,\Lambda\varepsilon/4)$ as well as $\norm{Uy_{n+1}(\omega)-u(\omega)}\leq\frac{K_0}{\lambda}$ for all $n$. Then for $n\geq \zeta(\lambda,\varepsilon)-1$, it follows from the above inequality that $\norm{Uy_{n+1}(\omega)-y_{n+1}(\omega)}<\varepsilon$. This immediately yields $\PP\left(\exists n\geq \zeta(\lambda,\varepsilon)\left(\norm{Uy_{n}-y_{n}}\geq\varepsilon\right)\right)<\lambda$ which completes the proof.
\end{proof}

\begin{proof}[Proof of Lemma \ref{sabach:stern}]
The first inequality follows for all $m,K\in\NN$ as in Lemma \ref{lem:quantXuLem} immediately by induction. For the second part of the above lemma, we note that $A_j^k=\tilde A_{j+1}^{k+1}$ and thus from this first inequality (after setting $K=0$), we have
\[
s_{n+1}\leq A_0^n s_0+\sum_{i=0}^{n}A_{i+1}^{n}c_i=\tilde A_1^{n+1} s_0+\sum_{i=0}^{n}\tilde A_{i+2}^{n+1}(\alpha_i-\alpha_{i+1})L\leq \tilde A_1^{n+1}s_0+L\sum_{i=1}^{n+1}\tilde A_{i+1}^{n+1}(\alpha_{i-1}-\alpha_i)
\]
and also $s_0=\tilde A_{1}^0 s_0$ by definition. For the final part, we observe that
\[
\tilde A_i^n=\prod_{j=i}^n(1-\alpha_{j})=\prod_{j=i}^n\frac{j}{j+2}=\frac{i}{i+2}\cdot \frac{i+1}{i+3}\cdot\ldots\cdot\frac{n-1}{n+1}\cdot\frac{n}{n+2}=\frac{i(i+1)}{(n+1)(n+2)}
\]
for $i\leq n$ (noting that this also holds for $i=n$), and therefore
\begin{equation*}
\begin{aligned}
s_n&\leq \frac{2s_0}{(n+1)(n+2)}+L\sum_{i=1}^n\left(\frac{2}{i+1}-\frac{2}{i+2}\right)\frac{(i+1)(i+2)}{(n+1)(n+2)}\\
&=\frac{2}{(n+1)(n+2)}\left(s_0+L\sum_{i=1}^n\left(\frac{1}{i+1}-\frac{1}{i+2}\right)(i+1)(i+2)\right)\\
&\leq\frac{2L}{(n+1)(n+2)}\left(1+\sum_{i=1}^n1\right)=\frac{2L}{n+2}
\end{aligned}
\end{equation*}
which completes the proof.
\end{proof}

\begin{proof}[Proof of Theorem \ref{thm:fastY}]
Using Lemma \ref{res:recurrence:y-to-x-to-y}, \eqref{eqn:y-to-x}, we have
\[
\norm{y_{n+1}-y_n}\leq \norm{x_{n+1}-x_n}+\norm{\xi_{n+1}-\xi_n}+\alpha_n\left(\norm{Tx_n-u}+\norm{\xi_n}\right)
\]
for all $n\in\mathbb{N}$ pointwise everywhere. By taking expectations, we get
\[
\EE[\norm{y_{n+1}-y_n}]\leq \EE[\norm{x_{n+1}-x_n}]+\EE[\norm{\xi_{n+1}-\xi_n}]+\alpha_{n}(K_0+K_1).
\]
From Theorem \ref{thm:fastX}, we get $\EE[\norm{x_n-x_{n+1}}]\leq2L/(n+2)$ and similar as in the proof thereof, we have $\EE[\norm{\xi_{n+1}-\xi_n}]\leq 2K_1/(n+2)^2\leq 2K_1/(n+2)$. Combined with the definition of $\alpha_n$, we get the first claim for a suitable $L$ arising as an integer linear combination of $K_0$, $K_1$ and $K_2$. The second claim follows similarly, noting the above Remark \ref{rem:sumToAS} and the fact that, using Markov's inequality, we have
\begin{align*}
&\PP\left(\exists n\geq N\left(\alpha_n(\norm{Tx_n-u}+\norm{\xi_n})\geq \varepsilon\right)\right)\\
&\qquad\leq \PP\left(\exists n\geq N\left((\norm{Tx_n-u}+\norm{\xi_n})\geq\varepsilon/\alpha_N\right)\right)\\
&\qquad\leq \PP\left(\exists n\geq N\left(Y\geq\varepsilon/2\alpha_N\right)\right)+\PP\left(\exists n\geq N\left(\norm{\xi_n}\geq\varepsilon/2\alpha_N\right)\right)\\
&\qquad\leq \frac{1}{\varepsilon}\frac{4K_0}{N+2}+\frac{1}{\varepsilon}\frac{8K_1}{N+2},
\end{align*}
which, combined with the previous, rather immediately yields the result (which we therefore do not spell out any further).
\end{proof}

\begin{proof}[Proof of Theorem \ref{thm:fastHalpern}]
Using the inequalities listed in the proof of Theorem \ref{res:other:ar}, we obtain
\[
\norm{Tx_n-x_n}\leq 3\norm{x_{n+1}-x_n}+\alpha_n\norm{Tx_n-u}+\norm{\xi_n}
\]
for all $n\in\mathbb{N}$ pointwise everywhere, in this special case where $U=\mathrm{Id}$ and $\delta_n=0$. This immediately yields the above rates (using similar arguments as in Lemma \ref{thm:fastY} in the case of the almost sure convergence) using the previous Theorem \ref{thm:fastX} (noting that in this case $K_2=0$).
\end{proof}

\begin{proof}[Proof of Theorem \ref{thm:fastTM}]
Using Lemma \ref{lem:TMineq}, we have
\[
\norm{Uy_{n}-y_{n}}\leq B\left(2\norm{y_{n-1}-y_{n}}+\alpha_{n}\norm{Uy_n-u}+\norm{\delta_{n-1}}\right)
\]
for $n\geq 1$ pointwise everywhere. Using the preceding Theorem \ref{thm:fastY}, we immediately get that
\[
\EE[\norm{Uy_n-y_n}]\leq\frac{2L_0}{n+2}\text{ and }\PP\left(\exists i\geq n\left( \norm{Uy_i-y_i}\geq\varepsilon\right)\right)\leq \frac{1}{\varepsilon}\frac{4L_0}{n+2}
\]
for $n\geq 1$ and a suitable constant $L_0$ arising as an integer linear combination of $K_0$ and $K_2$. Using the inequalities from Theorem \ref{res:other:ar}, we then further have
\[
\norm{Ux_n-x_n}\leq 2\norm{x_{n+1}-x_n}+3\norm{Uy_n-y_n}+2\norm{\delta_n}
\]
for all $n\in\mathbb{N}$ pointwise everywhere and so, using the previous results as well as Theorem \ref{thm:fastX}, we get the desired rates.
\end{proof}

\end{document}